\newtoks\by
\newtoks\paper
\newtoks\book
\newtoks\jour
\newtoks\yr
\newtoks\pages
\newtoks\vol
\newtoks\publ
\def\name[#1, #2]{#1 #2}
\def\ota{{\hbox{\bf ???}}}
\def\cLear{\by=\ota\paper=\ota\book=\ota\jour=\ota\yr=\ota
\pages=\ota\vol=\ota\publ=\ota}
\def\endpaper{\the\by, \textit{\the\paper},
{\the\jour} \textbf{\the\vol} (\the\yr), \the\pages.\cLear}
\def\endbook{\the\by, \textit{\the\book},
\the\publ, \the\yr.\cLear}
\def\endpap{\the\by, \textit{\the\paper}, \the\jour.\cLear}
\def\endproc{\the\by, \textit{\the\paper}, \the\book, \the\publ,
\the\yr, \the\pages.\cLear}
\newtheorem{theo}{Theorem}
\newtheorem{algo}{Algorithm}
\newtheorem{lem}{Lemma}[section]
\newtheorem{defi}{Definition}[section]
\newtheorem{cor}{Corollary}[section]
\newtheorem{prop}{Proposition}[section]
\newtheorem{rmk}{Remark}[section]
\newcommand{\eps}{\varepsilon}
\newcommand{\R}{\mathbb{R}}
\newcommand{\Z}{\mathbb{Z}}
\numberwithin{equation}{section}
\begin{document}
\title{An algorithm for one-dimensional Generalized Porous Medium Equations: interface tracking and the hole filling problem}
\date{}
\author{
   L\'eonard Monsaingeon
  \footnote{
    CAMGSD, Instituto Superior T\'ecnico, Av. Rovisco Pais 1049-001 Lisboa, Portugal
    \texttt{leonard.monsaingeon@ist.utl.pt} }
}

\maketitle

\begin{abstract}
Based on results of E. DiBenedetto and D. Hoff we propose an explicit finite difference scheme for the one dimensional Generalized Porous Medium Equation $\partial_t u=\partial_{xx}^2 \Phi(u)$. The scheme allows to track the moving free boundaries and captures the hole filling phenomenon when two free boundaries collide. We give an abstract convergence result when the mesh parameter $\Delta x\to 0 $ without any error estimates, and invesigate numerically the convergence rates.
\end{abstract}

\section{Introduction}
We consider the numerical approximation of nonnegative solutions $u(x,t)\geq 0$ to one-dimensional degenerate diffusion equations of the Generalized Porous Medium Equation type
\begin{equation}
\partial_t u=\partial^2_{xx} \Phi(u),\qquad t\geq 0,x\in \R.
\tag{GPME}
\label{eq:GPME_u}
\end{equation}
The nonlinearity $\Phi(s)$ is normalized as $\Phi(0)=0$, is monotone increasing for $s>0$, and satisfies the structural condition
\begin{equation}
1<a\leq \frac{s\Phi'(s)}{\Phi(s)} \leq b
 \tag{$\Gamma_{a,b}$}
\label{eq:strucural_ab} 
\end{equation}
for some constants $a,b$. This roughly means that nonlinearities in the class $\Gamma_{a,b}$ behave in between two pure powers $s^a,s^b$ for $1<a\leq b$, which is a generalization of the celebrated Porous Medium Equation (PME) $\partial_tu=\Delta u^m$ for $m>1$. Moreover, $a>1$ implies that $\Phi(s)/s$ is monotone increasing and $\lim\limits_{s\to 0^+}\frac{\Phi(s)}{s}=\Phi'(0)=0$. Writing $\partial_{xx}^2\Phi(u)=\partial_x(\Phi'(u)\partial_x u)$ the equation clearly degenerates at the levelset $\{u=0\}$, which results in the so-called \emph{finite speed of propagation}: if the initial data $u^0(x)$ is compactly supported then $u(\,.\,,t)$ remains compactly supported for all $t>0$, see \cite{dPV91}. Thus free-boundaries $\Gamma(t)=\partial \operatorname{supp} u(\,.\,,t)$ separate $\{u =0\}$ from $\{u>0\}$. In order to understand their propagation it is more convenient to use the \emph{pressure} variable, defined as
$$
v:=\Psi(u),\qquad \Psi(s):=\int_0^s\frac{\Phi'(z)}{z}dz.
$$
The pressure formally solves
\begin{equation}
v_t=\sigma(v)\partial^2_{xx}v+|\partial_xv|^2 ,
 \label{eq:GPME_v}
\end{equation}
where
$$
\sigma(v)=\Phi'(u)=\Phi'\circ \Psi^{-1}(v).
$$
The structural assumption \eqref{eq:structural_condition_sigma} implies that $(a-1)v\leq \sigma(v)\leq (b-1)v$, and $v,\sigma(v),\Phi'(u),\Phi(u)/u$ are comparable in the sense that the ratio of any two of them is bounded away from zero and from above. As a consequence $u=v=\sigma(v)=0$ at the free-boundaries, and formally discarding the $\sigma(v)\partial_{xx}^2v$ term we see that $\partial_t v=|\partial_x v|^2$ at any free-boundary point. This suggests that the free-boundary curves $\zeta(t)=\partial \operatorname{supp}v(\,.\,,t)$ should propagate with local speed $d\zeta/dt=-\partial_{x}v(\zeta(t),t)$, provided that these quantities make sense. As a consequence the speed of propagation should be bounded as soon as the pressure is Lipschitz in the space variable. Degenerate diffusion equations such as \eqref{eq:GPME_u} have attracted considerable attention in the last decades. We refer the reader to \cite{Va07,DK86,DK07,S83,DB93} and references therein 
for the 
Cauchy 
problem and regularity theory, and to \cite{A70,CVW87,CW90,DR03,dPV91} 
for the theory of free-boundaries.

In order to track the free-boundaries we shall work exclusively in the pressure framework \eqref{eq:GPME_v} rather than with \eqref{eq:GPME_u}, and we restrict in the whole paper to Lipschitz-continuous and compactly supported initial pressure
$$
0\leq v^0(x)\leq M,\qquad \operatorname{Lip}(v^0)\leq \gamma_0.
$$
Because \eqref{eq:GPME_u} and \eqref{eq:GPME_v} satisfy a comparison principle \cite{Va07} we expect that $0\leq v(x,t)\leq M$ for all times and the behaviour of $\Phi(s)$ should therefore be irrelevant for large $r=\Psi(s)\geq M$. As a consequence we relax \eqref{eq:strucural_ab} and only assume throughout the whole paper
$$
\sigma\in \mathcal{C}^1([0,\infty),\R^+)\cap\mathcal{C}^2(\R^+,\R^+),\qquad \sigma(0)=0,\qquad \sigma'>0,
$$
and
\begin{equation}
\forall \,r\in[0,M]:\qquad
0<s_1(M)\leq  \sigma'(r)\leq S_1(M)
\quad\mbox{and}\quad
|\sigma''(r)|\leq S_2(M)
\label{eq:structural_condition_sigma}
\end{equation}
for structural $s_1,S_1,S_2$.
\begin{rmk}
This condition on $\sigma(r)$ can be translated into conditions on the original $\Phi(s)$ nonlinearity through $r=\Psi(s)$, for example $\sigma'(r)=s\Phi''(s)/\Phi'(s)$. In the case of the pure PME nonlinearity $\Phi(s)=s^m$ one can compute explicitly $v=\Psi(u)=mu^{m-1}/(m-1)$ and $\sigma(v)=(m-1)v$, thus $s_1=S_1=(m-1)$ and $S_2=0$ in \eqref{eq:structural_condition_sigma}. As a consequence the above structural assumptions for $\sigma$ can be viewed as some PME-like behaviour condition in bounded intervals.
\end{rmk}
Because of gradient jumps at the free-boundaries no classical solutions can exist if $v^0$ has compact support, and we shall use the following weak formulation:
\begin{defi}
A function $0\leq v\in\mathcal{C}(\R\times [0,T])$ is a weak solution of \eqref{eq:GPME_v} with initial datum $v^0(x)$ if $\partial_x v\in L^2(\R\times(0,T))$ and
\begin{align*}
& \int\limits_{\R}v(x,\tau)\varphi(x,\tau)\mathrm{d}x-\int\limits_{\R}v^0(x)\varphi(x,0)\mathrm{d}x\nonumber\\
&
 \qquad+ \int\limits_{0}^{\tau}\int\limits_{\R}\left\{-v \partial_t\varphi	+ \sigma(v)\partial_x v\partial_x\varphi+\Big(1-\sigma'(v)\Big)|\partial_x v|^2\varphi\right\}\mathrm{d}x\,\mathrm{d}t=0
 \label{eq:weak_formulation_v_IBP0}
 \end{align*}
 for all $0\leq \tau\leq T$ and test functions $\varphi\in \mathcal{C}^{\infty}_c(\R\times[0,T])$.
 \label{defi:weak_sols_v}
\end{defi}
The equivalence between the density $u$ and pressure $v$ formulations with $v=\Psi(u)$ is well known \cite{A69}, and any weak solution $v$ in the sense of Definition~\ref{defi:weak_sols_v} automatically gives a weak solution $u=\Psi^{-1}(v)$ to \eqref{eq:GPME_u} in some sense. As already mentioned we only work in the pressure variable, hence we refrain from giving a precise definition of weak solutions for \eqref{eq:GPME_u} and refer the reader e.g. to \cite{Va07,dPV91}. Note that we impose here continuity at $t=0^+$, so that the initial data are taken in a strong sense.\\

The problem of numerical approximation to \eqref{eq:GPME_v} in dimension one goes back to \cite{GJ71}, where a finite difference approach was first proposed to compute numerical solutions of $\partial_t v=f(x,t,v)\partial_{xx}^2v+|\partial_x v|^2$ but free-boundaries were not accurately tracked. Later in \cite{TM83} a scheme allowing to track the interfaces was implemented for the pure PME nonlinearity $\Phi(s)=s^m$, but the authors were not able to prove convergence of the interface curves. Almost simultaneously, DiBenedetto and Hoff proposed in \cite{DBH84} an explicit finite-difference interface-tracking algorithm for the pure PME nonlinearity, and established rigorous error estimates for the solution and interfaces. In \cite{DBH84,GJ71,TM83} only the case of initial data $v^0$ consisting in a single patch is considered, i-e with when the initial support only has one connected component $\operatorname{supp}v^0=[\zeta_l(0),\zeta_r(0)]$. In this case the free-boundaries can be represented by two continuous 
left/right curves $\zeta_{lr}(t)$ with $\operatorname{supp}v(\,.\,,t)=[\zeta_l(t),\zeta_r(t)]$ for all $t\geq 0$. It is well known \cite[Corollary 1.5]{dPV91} that due to the diffusive nature of the problem $\operatorname{supp}v(\,.\,,t)$ is noncontracting in time, and as a consequence $\zeta_l$ and $\zeta_r$ are monotone nonincreasing and nondecreasing respectively.
In addition to this simple setting we will also consider the so-called \emph{hole-filling problem} when the initial support has two connected components at positive distance from each other, in which case the internal hole eventually fills and the internal interfaces disappear in finite time (see section~\ref{section:two_patches} for a detailed description of the problem). A finite elements method was recently employed in \cite{QZ09} to investigate the hole-filling and related problems, with satisfactory qualitative results but no rigorous convergence result.

Closely following \cite{DBH84}, we propose in this paper an extension of DiBenedetto and Hoff's algorithm to general nonlinearities, allowing to track the interfaces and solve past the hole-filling time.
As in \cite{DBH84} the algorithm reproduces at the discrete level all the properties satisfied by the solutions of \eqref{eq:GPME_v} at the continuous level.
More precisely: initial $\gamma_0$ Lipschitz regularity, nonnegativity, and $L^{\infty}$ bounds are preserved along the time evolution, solutions are $1/2$ H\"older continuous in time, and satisfy a generalized Aronson-B\'enilan estimate $\partial_{xx} v(\,.\,,t)\geq\underline{z}(t)\approx -C(1+1/t)$ in the sense of distributions $\mathcal{D}'(\R)$ for all fixed $t>0$.
For the pure PME nonlinearity $\Phi(s)=s^m$ the latter semi-convexity property was first proved in \cite{AB79} in the optimal form $\partial_{xx} v(x,t)\geq-1/(m+1)t$, and is fundamental for the regularity and propagation theories.
The scheme relies on the following splitting method: inside the support $\{v>0\}=\{\sigma(v)>0\}$ \eqref{eq:GPME_v} is formally parabolic, hence a classical finite difference scheme can be used with an extra $\eps$-viscosity stabilizing term. As already discussed one formally expects the hyperbolic propagation law $d\zeta/dt=-\partial_x v$ at the free-boundaries $x=\zeta(t)$, and thus enforcing the discrete equivalent allows to track the interfaces. Technically speaking this interface condition is in fact applied at the discrete level in some neighborhood of the interface curves. The neighborhood has thickness of the same order $\mathcal{O}(\Delta x)$ as the space mesh $\Delta x$, and can therefore be viewed as a numerical boundary layer.\\

The paper is organized as follows: in Section~\ref{section:one_patch} we describe the scheme for general nonlinearities when the initial data consists in a single patch (i-e has connected initial support). Imposing a suitable stability condition $\Delta t=\mathcal{O}(\Delta x^2)$ on the mesh parameters we establish discrete a priori bounds, including a generalized Aronson-B\'enilan estimate (Lemma~\ref{lem:Aronson_Benilan_estimate}). These a priori estimates then allow us to prove convergence of the approximate solutions and interface curves when $h=(\Delta x,\Delta t)\to 0$. In Section~\ref{section:two_patches} we show that the scheme can be extended to study the hole-filling problem. We construct a numerical approximation to the filling time and show that our scheme really captures the hole-filling phenomenon, in the sense that it allows to keep computing a consistent approximation to the solution past the filling time. In Section~\ref{section:num_exp_comments} we present a numerical experiments and 
investigate the order of convergence.

As already mentioned Section~\ref{section:one_patch} is an adaptation of \cite{DBH84} to general nonlinearities but requires significant technical modifications, in particular for the generalized Aronson-B\'enilan estimate (Lemma~\ref{lem:Aronson_Benilan_estimate}). To the best of our knowledge all the results in Section~\ref{section:two_patches} are new, even for the pure PME nonlinearity.
%
%
%
\section{The scheme for one patch only}
\label{section:one_patch}
%
%
Throughout the whole paper we fix mesh parameters $\Delta x,\Delta t$ and write $\{x_k\}_{k\in\Z}=\{k\,\Delta x\}$, $\{t^n\}_{n\geq 0}=\{n\,\Delta t\}$, as well as $v^n_k\approx v(x_k,t^n)$ and $\zeta_{lr}^n\approx\zeta_{lr}(t^n)$. Given a ``single patch'' compactly supported initial datum $v^0$
$$
0\leq v^0(x)\leq M,\qquad
\operatorname{Lip}(v^0)\leq \gamma_0,
\qquad
\operatorname{supp}v^0=[\zeta_l(0),\zeta_r(0)],
$$
we first initialize
$$
v^0_k:=v^0(x_k)
\qquad \mbox{and}\qquad
\zeta_{l,r}^0:=\zeta_{l,r}(0).
$$
Given an approximate solution $v_k^n$ and interfaces $\zeta_{l,r}^n$ at time $t^n$, we define
$$
K_l(n):=\min\{k\in \Z:\,x_{k-1}\geq \zeta_l^n\},
\qquad
K_r(n):=\max\{k\in \Z:\,x_{k+1}\leq \zeta_r^n\}
$$
and
$$
0\leq s^n_l:=x_{K_l(n)}-\zeta_l^n,\qquad 0\leq s^n_r:=\zeta_r^n-x_{K_r(n)}.
$$
We shall often speak of $x_k\in [x_{K_l(n)},x_{K_r(n)}]$ as the (numerical) support at time $t^n$, while $x_k\in[\zeta_l^n,x_{K_l(n)}]$ and $x_k\in[x_{K_r(n)},\zeta_r^n,]$ will be referred to as the (numerical) left and right boundary layers. Observe that by construction these boundary layers have thickness $\Delta x\leq s_l^n,s_r^n\leq 2\Delta x$, see Figure~\ref{fig:FIG1}. The interfaces at time $t^{n+1}$ are next computed as
\begin{equation}
\frac{\zeta_l^{n+1}-\zeta_l^n}{\Delta t}=-\frac{v_{K_l(n)}^n}{s_l^n},
\qquad\qquad
\frac{\zeta_r^{n+1}-\zeta_r^n}{\Delta t}=-\frac{v_{K_r(n)}^n}{s_r^n},
\label{eq:propagation_interfaces}
\end{equation}
thus reproducing the propagation law $d\zeta/dt=-\partial_x v$ at the free-boundaries. We will prove in Lemma~\ref{lem:Linfty_Lipschitz_estimate} that $v_k^n\geq 0$, and therefore $\zeta_{l}^{n+1}\leq \zeta_l^n$ and $\zeta_{r}^{n+1}\geq \zeta_r^n$. This monotonicity translates the noncontractivity of the support at the discrete level. We also define for later use
$$
\left(s^{n}_l\right)':=x_{K_l(n)}-\zeta_l^{n+1}\geq s^n_l,\qquad \left(s^{n}_r\right)':=\zeta_r^{n+1}-x_{K_r(n)}\geq s_r^n.
$$
Carefully note that $\left(s^{n}_{lr}\right)'\neq s^{n+1}_{lr}$ and that $\zeta^{n}_{lr}$ needs not be integer meshpoints, see Figure~\ref{fig:FIG1}.
\begin{figure}
 \def\svgwidth{.7\textwidth}
 \begin{center}
 \begingroup%
  \makeatletter%
  \providecommand\color[2][]{%
    \errmessage{(Inkscape) Color is used for the text in Inkscape, but the package 'color.sty' is not loaded}%
    \renewcommand\color[2][]{}%
  }%
  \providecommand\transparent[1]{%
    \errmessage{(Inkscape) Transparency is used (non-zero) for the text in Inkscape, but the package 'transparent.sty' is not loaded}%
    \renewcommand\transparent[1]{}%
  }%
  \providecommand\rotatebox[2]{#2}%
  \ifx\svgwidth\undefined%
    \setlength{\unitlength}{841.88974609bp}%
    \ifx\svgscale\undefined%
      \relax%
    \else%
      \setlength{\unitlength}{\unitlength * \real{\svgscale}}%
    \fi%
  \else%
    \setlength{\unitlength}{\svgwidth}%
  \fi%
  \global\let\svgwidth\undefined%
  \global\let\svgscale\undefined%
  \makeatother%
  \begin{picture}(1,0.70707072)%
    \put(0,0){\includegraphics[width=\unitlength]{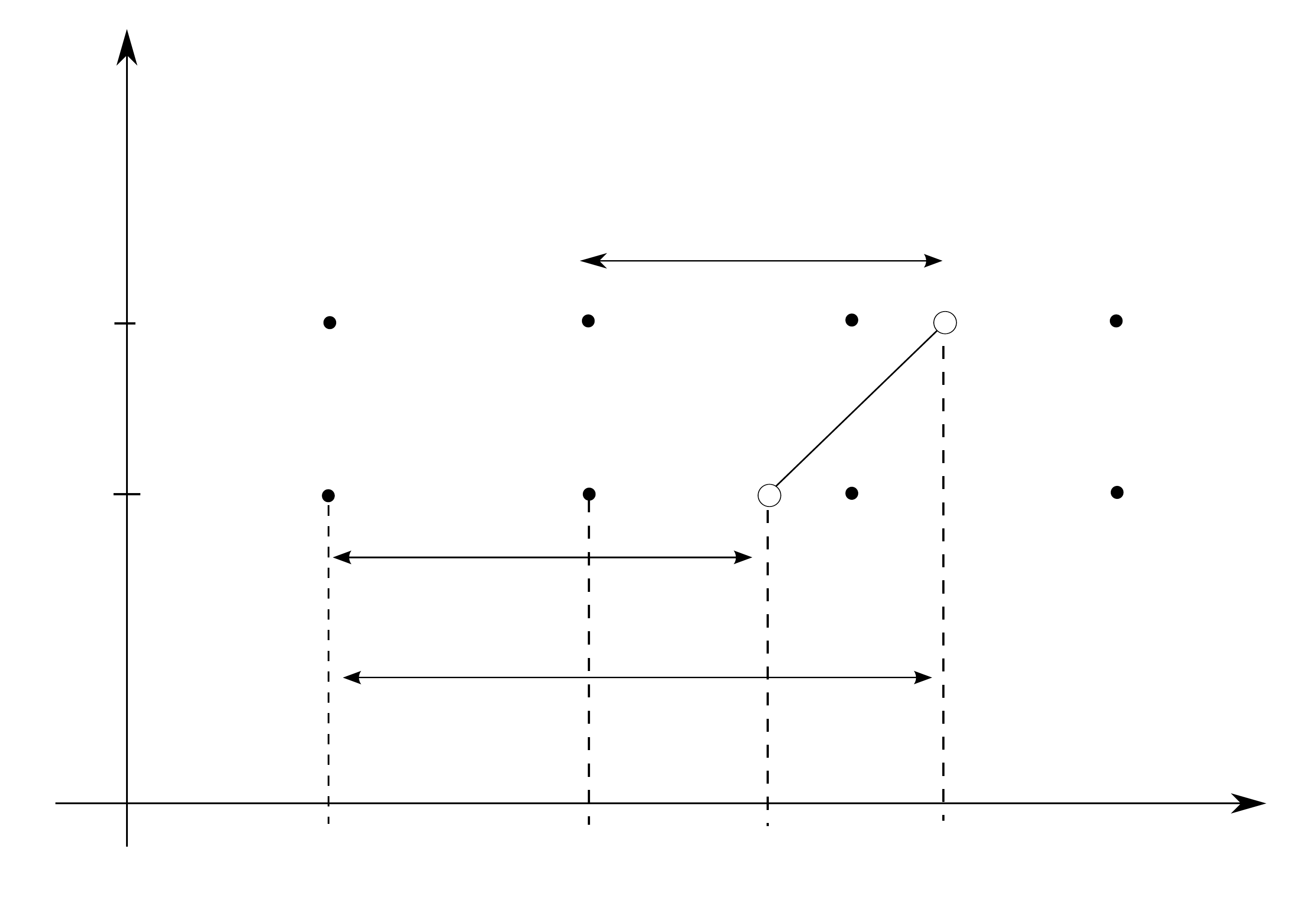}}%
    \put(0.92520172,0.04161024){\color[rgb]{0,0,0}\makebox(0,0)[lb]{\smash{$x$}}}%
    \put(0.20724699,0.04058544){\color[rgb]{0,0,0}\makebox(0,0)[lb]{\smash{$x_{K_r(n)}$}}}%
    \put(0.56787001,0.04191117){\color[rgb]{0,0,0}\makebox(0,0)[lb]{\smash{$\zeta_r^n$}}}%
    \put(0.70581748,0.04160674){\color[rgb]{0,0,0}\makebox(0,0)[lb]{\smash{$\zeta_r^{n+1}$}}}%
    \put(0.01925342,0.32353758){\color[rgb]{0,0,0}\makebox(0,0)[lb]{\smash{$t^n$}}}%
    \put(0.0059475,0.45515401){\color[rgb]{0,0,0}\makebox(0,0)[lb]{\smash{$t^{n+1}$}}}%
    \put(0.01888279,0.66697707){\color[rgb]{0,0,0}\makebox(0,0)[lb]{\smash{$t$}}}%
    \put(0.39631151,0.2460684){\color[rgb]{0,0,0}\makebox(0,0)[lb]{\smash{$s_r^n$}}}%
    \put(0.45548712,0.14654446){\color[rgb]{0,0,0}\makebox(0,0)[lb]{\smash{$\left(s^n_r\right)'$}}}%
    \put(0.54257394,0.53529407){\color[rgb]{0,0,0}\makebox(0,0)[lb]{\smash{$s_r^{n+1}$}}}%
    \put(0.38510369,0.04028106){\color[rgb]{0,0,0}\makebox(0,0)[lb]{\smash{$x_{K_r(n+1)}$}}}%
  \end{picture}%
\endgroup%

 \end{center}
 \caption{right numerical boundary layer}
\label{fig:FIG1}
 \end{figure}
The solution $v^{n+1}_k$ is then updated inside the support by enforcing
\begin{equation}
k\in[K_l(n),K_r(n)]:\qquad
\frac{v^{n+1}_k-v^n_k}{\Delta t}=(\sigma(v^n_k)+\eps)\frac{v_{k-1}^{n}-2v_{k}^{n}+v_{k+1}^{n}}{\Delta x^2}+\left|\frac{v_{k+1}^{n}-v_{k-1}^{n}}{2\,\Delta x}\right|^2,
 \label{eq:scheme}
\end{equation}
where $\eps>0$ is a fixed artificial viscosity parameter to be chosen later. Observe that \eqref{eq:scheme} is not applied across the interfaces but only in the numerical support, where \eqref{eq:GPME_v} is formally in the parabolic regime since $\{v>0\}=\{\sigma(v)>0\}$.

Inside the boundary layers of thickness $\left(s^n\right)'$ the solution is interpolated as
\begin{equation}
 \qquad v_{k}^{n+1}:=\left\{
 \begin{array}{cl}
  v_{K_l(n)}^{n+1}\frac{x_k-\zeta_l^{n+1}}{x_{K_l(n)}-\zeta_l^{n+1}} \qquad & x_k\in[\zeta^{n+1}_l,x_{K_l(n)-1}]\\
  v_{K_r(n)}^{n+1}\frac{\zeta_r^{n+1}-x_k}{\zeta_r^{n+1}-x_{K_r(n)}} & x_k\in[x_{K_r(n)+1},\zeta^{n+1}_r]
 \end{array}
 \right.,
\label{eq:def_linear_interpolation}
\end{equation}
and finally we set
$$
v_k^{n+1}:=0\qquad \mbox{for }x_k\notin[\zeta_l^{n+1},\zeta_r^{n+1}].
$$
The interpolation \eqref{eq:def_linear_interpolation} is consistent with the well known linear behaviour of the pressure variable across the moving free boundaries \cite[Theorem 15.24]{Va07}, see Lemma~\ref{lem:linear_growth_interface} later on.
\begin{rmk}
According to \eqref{eq:def_linear_interpolation} $v_k^n$ is exactly linear in the boundary layers. As a consequence \eqref{eq:propagation_interfaces} also reads $\frac{\zeta_l^{n+1}-\zeta_l^n}{\Delta t}=-\frac{v_{K_l(n)}-v_{K_l(n)-1}}{\Delta x}$ and $\frac{\zeta_r^{n+1}-\zeta_r^n}{\Delta t}=-\frac{v_{K_r(n)+1}-v_{K_r(n)}}{\Delta x}$, again reproducing the propagation law $d\zeta/dt=-\partial_x v$.
\label{rmk:propagation_law_interpolation}
\end{rmk}
Throughout the whole paper and without further mention we impose the following Courant-Fredriech-Lewis stability condition

\begin{equation}
 \begin{array}{c}
    \frac{\Delta t}{\Delta x^2}:=\beta\leq \frac{1}{2\Big( \sigma(M)+\eps\Big)
	+\gamma_0\Delta x\Big(4+3S_1(M)\Big)
	+ \gamma_0^2\Delta x^2 S_2(M)/2}\\
     	\gamma_0\Delta x\Big(27+9s_1(M)+3S_1(M)+\Delta xS_2(M)/4\Big)\leq \eps\leq\mathcal{O}(\Delta x)
  \end{array}
    \label{eq:CFL}
\tag{CFL} 
\end{equation}
%
with $\|v^0\|_{L^{\infty}(\R)}\leq M$, $\operatorname{Lip}(v^0)\leq \gamma_0$, and $s_1(M),S_1(M),S_2(M)\geq 0$ as in \eqref{eq:structural_condition_sigma}. 
%
%
%
\subsection{A priori discrete estimates}
\label{subsection:estimates_one_patch}
Defining the discrete downwind and centered spatial derivatives
$$
w^n_k:=\frac{v^n_k-v^n_{k-1}}{\Delta x},\qquad \overline{w}^n_k:=\frac{v^n_{k+1}-v^n_{k-1}}{2\Delta x},
$$
the first discrete estimate reads
\begin{lem}
Assume that $0\leq v^0_k\leq M$ with $|w^0_k|\leq \gamma_0$. Then for all $k,n$ there holds
$$
0\leq v^n_k\leq M\quad \mbox{and}\quad |w^n_k|\leq \gamma_0.
$$
\label{lem:Linfty_Lipschitz_estimate}
\end{lem}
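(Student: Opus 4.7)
The proof proceeds by induction on $n$, propagating both estimates simultaneously. The base case $n=0$ is the hypothesis. Fix $n$ and assume $0\leq v^n_k\leq M$ and $|w^n_k|\leq \gamma_0$ for all $k\in\Z$. The indices split into three regions that must be treated differently: the \emph{interior} $k\in[K_l(n),K_r(n)]$ on which $v^{n+1}_k$ is produced by the parabolic scheme \eqref{eq:scheme}; the \emph{boundary layer} on which $v^{n+1}_k$ comes from the linear interpolation \eqref{eq:def_linear_interpolation}; and the \emph{exterior} where $v^{n+1}_k=0$. Special attention will be required at the transitions between these regions.

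The $L^{\infty}$ and nonnegativity bound at interior points rests on the elementary identity
\[
\Delta t\,|\overline{w}^n_k|^{2} \;=\; \frac{\Delta t\,\overline{w}^n_k}{2\Delta x}\bigl(v^n_{k+1}-v^n_{k-1}\bigr),
\]
which recasts \eqref{eq:scheme} as the affine combination
\[
v^{n+1}_k \;=\; \alpha_{-1}\,v^n_{k-1}+\alpha_0\,v^n_k+\alpha_{+1}\,v^n_{k+1},
\qquad
\alpha_{\pm 1}=\beta(\sigma(v^n_k)+\eps)\pm\frac{\Delta t\,\overline{w}^n_k}{2\Delta x},
\]
with $\alpha_0=1-2\beta(\sigma(v^n_k)+\eps)$ and $\alpha_{-1}+\alpha_0+\alpha_{+1}=1$. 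The inductive bounds $|\overline{w}^n_k|\leq\gamma_0$ and $\sigma(v^n_k)\leq\sigma(M)$, together with \eqref{eq:CFL} (which implies both $\eps\geq\tfrac12\gamma_0\Delta x$ and $\beta(\sigma(M)+\eps)\leq 1/2$) force all three coefficients to be nonnegative, so $v^{n+1}_k$ is a convex combination of values in $[0,M]$. In the boundary layer, \eqref{eq:def_linear_interpolation} writes $v^{n+1}_k$ as a convex combination of $0$ and the already-controlled interior value $v^{n+1}_{K_{l,r}(n)}\in[0,M]$, hence $v^{n+1}_k\in[0,M]$ as well; exterior points are trivially zero.

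The Lipschitz bound $|w^{n+1}_k|\leq\gamma_0$ is the main technical obstacle. For an interior pair $k-1,k\in[K_l(n)+1,K_r(n)]$, I would difference the two affine identities above, expand $\sigma(v^n_k)-\sigma(v^n_{k-1})=\sigma'(\xi)\,\Delta x\,w^n_k$ and the corresponding increment of $\sigma'$ via the mean-value theorem (which is where the structural constants $s_1,S_1,S_2$ enter), and reorganize the result as
\[
w^{n+1}_k \;=\; \sum_{j=-1}^{+1}\widetilde{\alpha}_j\,w^n_{k+j}+\mathcal{R}_k,
\]
with $\sum_j\widetilde{\alpha}_j=1$ and $\widetilde{\alpha}_j\geq 0$ under \eqref{eq:CFL}, and a remainder $\mathcal{R}_k$ collecting quadratic cross-terms whose combined size is bounded by $\gamma_0\Delta x\bigl(27+9s_1(M)+3S_1(M)+\tfrac14\Delta x\,S_2(M)\bigr)\cdot(\ldots)$. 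These are precisely the constants buried in the lower bound on $\eps$ in \eqref{eq:CFL}, so $\mathcal{R}_k$ is absorbed by the $\eps$-viscosity inside the parabolic coefficient and a convex-combination argument yields $|w^{n+1}_k|\leq\gamma_0$.

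For pairs lying in or straddling the boundary layer, the linearity of \eqref{eq:def_linear_interpolation} gives the explicit identity $w^{n+1}_k=v^{n+1}_{K_l(n)}/(s^n_l)'$ (and symmetrically on the right), so the Lipschitz bound reduces to the geometric estimate $v^{n+1}_{K_l(n)}\leq\gamma_0\,(s^n_l)'$. This in turn follows from the interior convex-combination representation of $v^{n+1}_{K_l(n)}$ together with the previous-step interpolation $v^n_{K_l(n)-1}=v^n_{K_l(n)}(s^n_l-\Delta x)/s^n_l$ and the inductive bound $|w^n_{K_l(n)}|\leq\gamma_0$, the noncontractivity $(s^n_l)'\geq s^n_l\geq \Delta x$ providing the required positive denominator. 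The genuinely delicate piece is the remainder bookkeeping in the interior Lipschitz estimate, where the generalized-nonlinearity setting demands a substantial upgrade of the DiBenedetto--Hoff calculation of \cite{DBH84} and dictates the rather baroque form of the stability condition \eqref{eq:CFL}.
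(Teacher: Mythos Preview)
Your treatment of the $L^\infty$ bound in Step~1 is fine and matches the paper. The gap is in your interior Lipschitz estimate. You posit a representation
\[
w^{n+1}_k \;=\; \sum_{j=-1}^{+1}\widetilde{\alpha}_j\,w^n_{k+j}+\mathcal{R}_k
\]
with a nontrivial additive remainder $\mathcal{R}_k$ to be ``absorbed by the $\eps$-viscosity''. But if such a remainder really survives, a convex-combination argument only gives $|w^{n+1}_k|\leq \gamma_0+|\mathcal{R}_k|$, not $|w^{n+1}_k|\leq\gamma_0$; there is no mechanism by which an additive term is cancelled by making a coefficient more positive. The point you are missing is structural: differentiating $\partial_t v=\sigma(v)\partial_{xx}^2v+|\partial_x v|^2$ in $x$ yields
\[
\partial_t w=\sigma(v)\partial_{xx}^2 w+\bigl(\sigma'(v)w+2w\bigr)\partial_x w,
\]
which has \emph{no zeroth-order term}. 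The paper reproduces this exactly at the discrete level: using the finite-difference quotient $\mathfrak{S}_v:=\dfrac{\sigma(v^n_k)-\sigma(v^n_{k-1})}{v^n_k-v^n_{k-1}}$ (rather than a mean-value expansion invoking $\sigma',\sigma''$), subtracting the two interior schemes gives the \emph{exact} identity
\[
w^{n+1}_k=(1-2a)w^n_k+(a-b)w^n_{k-1}+(a+b)w^n_{k+1},
\]
with $a=\beta\Bigl(\tfrac{\sigma^n_k+\sigma^n_{k-1}}{2}+\eps\Bigr)$ and $b=\beta\Delta x\Bigl(\tfrac{\mathfrak{S}_v w^n_k}{2}+\tfrac{w^n_{k+1}+2w^n_k+w^n_{k-1}}{4}\Bigr)$. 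Here $|b|\leq\beta\Delta x\gamma_0\bigl(\tfrac{S_1(M)}{2}+1\bigr)\leq\beta\eps\leq a\leq\tfrac12$, so the combination is genuinely convex and $|w^{n+1}_k|\leq\gamma_0$ with no remainder at all. Only $S_1(M)$ enters; $s_1(M)$ and $S_2(M)$ play no role in this lemma.

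Relatedly, you have misread the \eqref{eq:CFL} condition. The constants $27+9s_1(M)+3S_1(M)+\tfrac14\Delta x\,S_2(M)$ in the lower bound on $\eps$ are \emph{not} consumed by Lemma~\ref{lem:Linfty_Lipschitz_estimate}; they are there to make the generalized Aronson--B\'enilan estimate (Lemma~\ref{lem:Aronson_Benilan_estimate}) go through, where one differences the scheme twice and genuinely needs control on $\mathfrak{S}_{vv}\approx\sigma''$. For the present lemma only $\eps\geq\gamma_0\Delta x\bigl(1+\tfrac{S_1(M)}{2}\bigr)$ and $\beta(\sigma(M)+\eps)\leq\tfrac12$ are used. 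Your final paragraph about the ``substantial upgrade of the DiBenedetto--Hoff calculation'' therefore applies to the AB estimate, not to this one. For the boundary layer, the paper obtains the analogous exact convex identity $w^{n+1}_{K+1}=w^n_{K+1}+c\,(w^n_K-w^n_{K+1})$ by combining the scheme at $k=K$ with $v^n_K=-w^n_{K+1}s^n$ and $(s^n)'-s^n=-w^n_{K+1}\Delta t$; your sketch via the inequality $v^{n+1}_{K}\leq\gamma_0(s^n)'$ is in the right spirit but would still need this cancellation to close.
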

\begin{proof}
We write $\beta=\Delta t/\Delta x^2$ and abbreviate $\sigma_k^n:=\sigma(v_k^n)$. Arguing by induction on $n$ our statement holds for $n=0$ by assumption on the initial datum.\\
{\bf Step 1: positivity and $l^{\infty}$ stability.} 
Noting that $\frac{v_{k+1}^n-v_{k-1}^n}{2\Delta x}=\frac{w_{k+1}^n+w_{k}^n}{2}$ it is easy to rewrite \eqref{eq:scheme} inside the support $x_k\in[x_{K_l(n)},x_{K_r(N)}]$ as
$$
v_k^{n+1} =	(1-2a)v_k^n +	(a-b)v_{k-1}^n +(a+b)v_{k-1}^n
$$
with
\begin{equation}
a:=\beta(\sigma_k^n+\eps)
\quad\mbox{and}\quad
b:=\beta \Delta x(w_{k+1}^n+w_{k}^n)/4 .
\label{eq:def_ab_Linfty_Lipschitz}
\end{equation}
By the induction hypothesis and monotonicity of $\sigma$ the \eqref{eq:CFL} condition implies
$$
0\leq \beta \eps  \leq a\leq \beta(\sigma(M)+\eps)\leq \frac{1}{2},\qquad |b|\leq \Delta x\beta\gamma_0/2\leq \beta\eps \leq a,
$$
thus $v_k^{n+1}$ is a convex combination of $v_{k-1}^n,v_{k}^n,v_{k+1}^n\in [0,M]$. In particular $0\leq v_{k}^{n+1}\leq M$ for $k\in[K_l(n),K_r(n)]$, and by \eqref{eq:def_linear_interpolation} clearly $0\leq v^{n+1}_k\leq M$ everywhere.
%

%
\noindent{\bf Step 2: Lischitz bounds in the support.} Consider any $k\in[K_l(n)+1, K_r(n)]$, so that $v^{n+1}_k,v^{n+1}_{k-1}$ are both computed using \eqref{eq:scheme}, which we recast in the form
\begin{equation}
v_{k}^{n+1}=v_k^n+\beta \Delta x\left(\sigma_k^n+\eps\right)(w^n_{k+1}-w^n_{k})+\beta \Delta x^2/4\left(w^n_{k+1}+w_{k-1}^n\right)^2.
\label{eq:finite_diff_v_1}
\end{equation}
Subtracting the corresponding equation for $v_{k-1}^{n+1}$ and dividing by $\Delta x$,
straightforward manipulations lead to
\begin{align}
  w_k^{n+1}&=   w_k^n +\Delta t \Bigg[\left(\frac{\sigma_k^n+\sigma_{k-1}^n}{2}+\eps\right) \frac{ w_{k+1}^n-2w_{k}^n+w_{k-1}^n}{\Delta x^2}\nonumber\\
  & \hspace{2cm}+ \left(\mathfrak{S}_{v} w_k^n+ 2 \frac{w_{k+1}^n+2w_{k}^n+w_{k-1}^n}{4}\right)\frac{w_{k+1}^n-w_{k-1}^n}{2 \Delta x}\Bigg]
  \label{eq:diff_wk}
\end{align}
with
$$
\mathfrak{S}_v:=\frac{\sigma_k^n-\sigma_{k-1}^n}{v_k^n-v_{k-1}^n} = \frac{\sigma(v_k^n)-\sigma(v_{k-1}^n)}{v_k^n-v_{k-1}^n}\approx \sigma'(v(x_k,t^n)).
$$
Formula \eqref{eq:diff_wk} is the discrete equivalent of
\begin{equation}
 w=\partial_x v:\qquad \partial_t w=\sigma(v)\partial_{xx}^2w+\big[\sigma'(v)w+2w\big]\partial_x w,
 \label{eq:PDE_w}
\end{equation}
which is formally obtained differentiating \eqref{eq:GPME_v} w.r.t. $x$. Considering \eqref{eq:PDE_w} as a linear parabolic equation $\partial_t w=a\partial_{xx}^2v+b\partial_xw$ with no zero-th order coefficient, we see that $w=\partial_x v$ formally satisfies the maximum principle. Thus the initial $\gamma_0$-Lipschitz bounds for $v^0$ should be preserved for $t\geq 0$ as in our statement.

In order to make this maximum principle rigorous at the discrete level we rewrite \eqref{eq:diff_wk} as
\begin{equation}
w_k^{n+1}=(1-2a)w_k^n+(a-b)w_{k-1}^n+(a+b)w_{k+1}^n,
\label{eq:w_k^n+1}
\end{equation}
with now
\begin{equation}
a=\beta\left(\frac{\sigma_k^n+\sigma_{k-1}^n}{2}+\eps\right)
\quad \mbox{and} \quad 
b=\beta \Delta x\left(\frac{\mathfrak{S}_v w_k^n}{2}+\frac{w_{k+1}^n+2w_{k}^n+w_{k-1}^n}{4}\right).
\label{eq:def_ab}
\end{equation}
By the induction hypothesis $0\leq v_k^n\leq M$ and the structural assumption \eqref{eq:structural_condition_sigma} we get $0\leq \mathfrak{S}_v\approx \sigma'(v_k^n)\leq S_1(M)$, and the \eqref{eq:CFL} condition implies
$$
0\leq\beta \eps\leq  a \leq \beta\Big(\sigma(M)+\eps\Big)\leq  1/2\quad \mbox{and} \quad |b|\leq \beta \Delta x \gamma_0\left(\frac{S_1(M)}{2}+1\right) \leq \beta \eps \leq a.
$$
From \eqref{eq:w_k^n+1} we see that $w_{k}^{n+1}$ is a convex combination of $w_{k-1}^{n},w_{k-1}^{n},w_{k+1}^{n}$ and we conclude that $|w_{k}^{n+1}|\leq \gamma_0$ as claimed.\\
{\bf Step 3: Lischitz bounds close to the interfaces.} The computations at the left and right interfaces are identical, so we only deal with the right one and write $K=K_r(n)$, $s^n=\zeta_r^n-x_{K}$ and $(s^n)'=\zeta_r^{n+1}-x_{K}$ for simplicity. By construction of the scheme $v_k^{n+1}$ is linear for $x_k\in[x_{K},\zeta^{n+1}]$ and zero for $x_k\geq \zeta^{n+1}$. In particular $w_{K+1}^{n+1}\leq w_k^{n+1}\leq 0$ for all $x_k\geq x_{K+1}$ and it is clearly enough to estimate $|w_{K+1}^{n+1}|$. From \eqref{eq:def_linear_interpolation} we see that $w^{n+1}_{K+1}=-\frac{v_K^{n+1}}{(s^n)'}$, and exploiting \eqref{eq:scheme} we get
\begin{align*}
w_{K+1}^{n+1}	
& =-\frac{1}{(s^n)'}\Bigg{[}v_K^n + \Delta t\left(\sigma_K^n+\eps\right)\frac{v_{K+1}^n-2v_K^n+v_{K_1}^n}{\Delta x^2}+\Delta t\left(\frac{v_{K+1}^n-v_{K-1}^n}{2\Delta x}\right)^2 \Bigg{]}\nonumber\\
& =-\frac{1}{(s^n)'}\Bigg{[}v_K^n + \beta\Delta x \left(\sigma_K^n+\eps\right)(w_{K+1}^n-w_K^n)\nonumber\\
  & \hspace{2cm} +\Delta t \left\{(w_{K+1}^n)^2 -\frac{w_K^n+3w_{K+1}^n}{4}(w_{K+1}^n-w_K^n)\right\} \Bigg{]}.
\end{align*}
According to Remark~\ref{rmk:propagation_law_interpolation} we have $(s^n)'-s^n=\zeta^{n+1}-\zeta^n=-w_{K+1}^n\Delta t$, and since $v_K^{n}=-w_{K+1}^ns^n$ we get $v_K^n+\Delta t (w_{K+1}^n)^2=-w_{K+1}^n (s^n)'$. Substituting in the previous expression gives
\begin{equation}
w_{K+1}^{n+1}=w_{K+1}^n+c(w_{K}^n-w_{K+1}^n)
\label{eq:w_k^n+1_interface}
\end{equation}
with
\begin{equation}
c=\frac{\beta \Delta x}{(s^n)'}\left((\sigma_K^n+\eps)-\Delta x\frac{3w_{K+1}^n+w_{K}^n }{4}\right).
\label{eq:def_c}
\end{equation}
Using the induction hypothesis, the \eqref{eq:CFL} condition, and $(s^n)'\geq s^n \geq \Delta x$ yields
$$
0\leq \beta\frac{\Delta x}{(s^n)'}\left(\eps-\gamma_0 \Delta x\right)\leq c\leq \beta\left(\sigma(M)+\eps+\gamma_0\Delta x\right)\leq 1,
$$
thus by \eqref{eq:w_k^n+1_interface} $|w_{K+1}^{n+1}|\leq \gamma_0$ as the convex combination of $w_{K}^{n},w_{K+1}^{n}$ and the proof is complete.
\end{proof}
As a consequence the interfaces propagate with finite speed:
\begin{lem}
For all $t^n\geq 0$ there holds $\left|\frac{\zeta^{n+1}_{lr}-\zeta^n_{lr}}{\Delta t}\right|\leq \gamma_0$ and
$$
\zeta_l(0)-\gamma_0t^n\leq \zeta^n_l\leq \zeta_l(0)\leq \zeta_r(0)\leq \zeta_r^n\leq \zeta_r(0)+\gamma_0t^n.
$$
\label{lem:discrete_speed_interfaces}
\end{lem}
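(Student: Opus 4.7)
The plan is to use the propagation formula \eqref{eq:propagation_interfaces} to reduce the speed bound to an $L^\infty$ estimate on a discrete derivative, which is then delivered by Lemma \ref{lem:Linfty_Lipschitz_estimate}. By the mirror symmetry of the scheme I will only treat the right interface. Since $v^n_k \geq 0$, the formula yields $(\zeta_r^{n+1} - \zeta_r^n)/\Delta t = -v^n_{K_r(n)}/s_r^n \leq 0$, so it suffices to show $v^n_{K_r(n)}/s_r^n \leq \gamma_0$. For the initial step $n=0$ this is immediate: $v^0_k = v^0(x_k)$ and $v^0(\zeta_r(0))=0$, so the assumption $\operatorname{Lip}(v^0)\leq \gamma_0$ gives $v^0_{K_r(0)}/s_r^0 \leq \gamma_0$ directly.

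For $n \geq 1$, the key observation is that $v^n$ was built by the linear interpolation \eqref{eq:def_linear_interpolation} on the old boundary layer $[x_{K_r(n-1)+1}, \zeta_r^n]$ of thickness $(s_r^{n-1})' = \zeta_r^n - x_{K_r(n-1)}$, with constant slope $-v^n_{K_r(n-1)}/(s_r^{n-1})'$. By Remark \ref{rmk:propagation_law_interpolation} this slope coincides (up to a sign) with $w^n_{K_r(n-1)+1}$, whose modulus is bounded by $\gamma_0$ via Lemma \ref{lem:Linfty_Lipschitz_estimate}. It therefore remains to identify $v^n_{K_r(n)}/s_r^n$ with this very slope. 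Since $(s_r^{n-1})' \geq s_r^{n-1} \geq \Delta x$, we have $x_{K_r(n-1)+1} \leq \zeta_r^n$, and hence $K_r(n) \geq K_r(n-1)$. If $K_r(n) = K_r(n-1)$, then $s_r^n = (s_r^{n-1})'$ by construction and the identity is tautological. If $K_r(n) > K_r(n-1)$, then $x_{K_r(n)}$ lies strictly inside the old boundary layer, so $v^n_{K_r(n)}$ itself was set by the linear interpolation formula, yielding $v^n_{K_r(n)}/s_r^n = v^n_{K_r(n-1)}/(s_r^{n-1})'$ as required.

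The spatial bounds $\zeta_r(0) \leq \zeta_r^n \leq \zeta_r(0) + \gamma_0 t^n$ then follow by telescoping $\zeta_r^n = \zeta_r(0) + \sum_{m=0}^{n-1}(\zeta_r^{m+1} - \zeta_r^m)$, each increment lying in $[0, \gamma_0 \Delta t]$ by the monotonicity of the numerical support (itself a consequence of $v^n_k \geq 0$ via Lemma \ref{lem:Linfty_Lipschitz_estimate}) and the speed estimate just established; the left interface is symmetric. I expect no conceptual obstacle: the only real subtlety is the two-case geometric bookkeeping needed to locate $x_{K_r(n)}$ inside the previous step's boundary layer, but both configurations collapse to the same slope identity and the proof closes.
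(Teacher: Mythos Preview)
Your proof is correct and follows essentially the same route as the paper, which simply invokes Remark~\ref{rmk:propagation_law_interpolation} to write $|(\zeta^{n+1}-\zeta^n)/\Delta t|=|w^n_{K(n)\pm 1}|$ and then applies Lemma~\ref{lem:Linfty_Lipschitz_estimate}; your two-case bookkeeping is precisely what justifies that remark. You are in fact more careful than the paper at the step $n=0$, where the boundary-layer linearity from \eqref{eq:def_linear_interpolation} is not yet in force and the continuous Lipschitz bound on $v^0$ must be used directly.
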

\begin{proof}
By Remark~\ref{rmk:propagation_law_interpolation} $|(\zeta^{n+1}-\zeta^n)/\Delta t|=|-w_{K(n)\pm1}^n|$ so our statement immediately follows by Lemma~\ref{lem:Linfty_Lipschitz_estimate} and the pinning $\zeta_{ln}^0=\zeta_{lr}(0)$. The monotonicity is a consequence of \eqref{eq:propagation_interfaces} with $v_k^n\geq 0$.
\end{proof}

In the next auxiliary lemma we construct the lower bound to be used in the generalized Aronson-B\'enilan estimate $\partial^2_{xx} v\geq \underline{z}(t)$ by means of a certain ODE:
\begin{lem}
Let $\Lambda:=\gamma_0^2S_2(M)$ and $F(z):=\Lambda z +(2+s_1(M))z^2$ with $s_1,S_2$ as in \eqref{eq:structural_condition_sigma}. There is a function $\underline{z}(t):\R^+\to \R$ such that $\frac{d\underline{z}}{dt}=F(\underline{z})$ with $\lim\limits_{t\searrow 0}\underline{z}(t)=-\infty$. Moreover $\underline{z}$ is monotone increasing and concave, $\underline{z}(t)\leq \underline{z}(\infty)=-\Lambda/(2+s_1(M))$, and $\underline{z}(t)\sim -\frac{1}{(2+s_1(M))t}$ when $t\searrow 0$.
\label{lem:def_ODE_z}
\end{lem}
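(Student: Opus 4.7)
\medskip
\noindent\textbf{Proof proposal.} The plan is to solve the ODE in closed form: this is a Bernoulli-type equation and everything in the statement then reduces to inspection of the explicit formula. Set $\mu:=2+s_1(M)>0$ so that $F(z)=z(\Lambda+\mu z)$, with zeros at $z=0$ and $z=-\Lambda/\mu<0$. Because we want $\underline{z}(t)\nearrow -\Lambda/\mu$ from $-\infty$, I look for the solution in the range $\underline{z}\in(-\infty,-\Lambda/\mu)$, where both factors in $F(z)$ are negative and therefore $F(\underline{z})>0$; monotonicity $\dot{\underline{z}}>0$ is then immediate from the ODE.

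Next I integrate by separation of variables using the partial fraction decomposition
$$
\frac{1}{z(\Lambda+\mu z)}=\frac{1}{\Lambda}\left(\frac{1}{z}-\frac{\mu}{\Lambda+\mu z}\right),
$$
which yields $\Lambda^{-1}\log\bigl|z/(\Lambda+\mu z)\bigr|=t+C$. Requiring the blow-up $\underline{z}(0^+)=-\infty$ fixes the constant and, after solving for $z$, gives the closed form
$$
\underline{z}(t)=-\frac{\Lambda}{\mu\bigl(1-e^{-\Lambda t}\bigr)},
$$
valid for $t>0$ (and understood in the limiting sense $\underline{z}(t)=-1/(\mu t)$ if $\Lambda=0$, which is the degenerate case $S_2(M)=0$ or $\gamma_0=0$).

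From this formula all the remaining assertions are verifications. For $t\searrow 0$ one has $1-e^{-\Lambda t}\sim \Lambda t$, hence $\underline{z}(t)\sim-1/(\mu t)=-1/\bigl((2+s_1(M))t\bigr)$ as claimed. For $t\to\infty$, $e^{-\Lambda t}\to 0$ gives $\underline{z}(\infty)=-\Lambda/\mu$, and monotonicity then implies $\underline{z}(t)\leq\underline{z}(\infty)$. For concavity I differentiate the ODE,
$$
\ddot{\underline{z}}=F'(\underline{z})\,\dot{\underline{z}}=\bigl(\Lambda+2\mu\underline{z}\bigr)\,F(\underline{z}),
$$
and observe that on the admissible range $\underline{z}<-\Lambda/\mu$ we have $\Lambda+2\mu\underline{z}<-\Lambda\leq 0$, while $F(\underline{z})>0$; hence $\ddot{\underline{z}}<0$. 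There is no serious obstacle here, the only mild care being needed to handle the $\Lambda=0$ degeneracy separately (or by a limiting argument).
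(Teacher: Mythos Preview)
Your proof is correct. The paper takes a different, purely qualitative route: it observes that $F$ vanishes at $-\Lambda/\mu$, picks any initial condition $z(t_0)=z_0<-\Lambda/\mu$, invokes phase-portrait considerations to see that the solution increases toward $-\Lambda/\mu$ forward in time and blows up to $-\infty$ at some finite backward time $\underline{T}$, and then time-shifts by $\underline{T}$; the remaining properties are dismissed as ``a straightforward phase portrait analysis''. Your approach is more explicit and more informative: solving the Bernoulli equation by separation of variables produces the closed form $\underline{z}(t)=-\Lambda/\bigl(\mu(1-e^{-\Lambda t})\bigr)$, from which monotonicity, concavity, the asymptotics at $0^+$ and $\infty$, and the $\Lambda=0$ degeneration are all read off directly. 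The paper's argument has the mild advantage of applying verbatim to any $F$ with the same phase-portrait structure, but for the specific quadratic $F$ at hand your explicit solution is cleaner and leaves nothing implicit.
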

\begin{proof}
Observe that $F(z)$ is a quadratic polynomial with $F(-\Lambda/(2+s_1(M)))=0$. Picking any $t_0>0,z_0<-\Lambda/(2+s_1(M))$ and solving $dz/dt=F(z)$ with $z(t_0)=z_0$ it is easy to see that $z$ is monotone increasing in $(\underline{T},\infty)$ with blow-up in finite time $z(\underline{T})=-\infty$ and $z(\infty)=-\Lambda/(2+s_1(M))$. Shifting $\underline{z}(t):=z(t+\underline{T})$ gives the sought solution, and all the qualitative properties follow from a straightforward phase portrait analysis.
\end{proof}
The generalized Aronson-B\'enilan estimate then takes the form
\begin{lem}
Let $\underline{z}(t)$ as in Lemma~\ref{lem:def_ODE_z}. Then for all $k,n$ there holds
\begin{equation}
Z_k^n:=\frac{Av_k^n}{\Delta x^2}=\frac{v_{k-1}^n-2v_{k}^n+v_{k+1}^n}{\Delta x^2}\geq \underline{z}(t^n).
\label{eq:Aronson_Benilan}
\end{equation}
\label{lem:Aronson_Benilan_estimate}
\end{lem}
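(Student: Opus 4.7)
The plan is to derive a discrete evolution equation for $Z_k^n$ that mimics the continuous identity $\partial_t Z = \sigma(v)\,\partial_{xx}^2 Z + 2(1+\sigma'(v)) v_x Z_x + \sigma''(v) v_x^2 Z + (\sigma'(v)+2) Z^2$ obtained by differentiating \eqref{eq:GPME_v} twice in $x$, and then compare it with the comparison ODE of Lemma~\ref{lem:def_ODE_z} through a discrete minimum principle. Writing the scheme as $v_k^{n+1} = v_k^n + \Delta t\,R_k^n$ with $R_k^n := (\sigma_k^n+\eps) Z_k^n + (\bar w_k^n)^2$ and applying the centred second difference $D^2_h$ in $k$, one gets $Z_k^{n+1} = Z_k^n + \Delta t\,D^2_h R_k^n$. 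I would expand the right-hand side using the exact discrete Leibniz identity
\[
D^2_h(fg)_k = f_k\,D^2_h g_k + g_k\,D^2_h f_k + 2\,\bar\partial f_k\,\bar\partial g_k + \tfrac{\Delta x^2}{2}\,D^2_h f_k\cdot D^2_h g_k,
\]
applied to $fg=\sigma(v)\,Z$ and to $(\bar w)^2$, together with a discrete chain rule $\bar\partial \sigma(v)_k = \sigma'(\xi_k)\,\bar w_k$ (and an analogous expansion for $D^2_h\sigma(v)_k$) at appropriate intermediate points $\xi_k$.

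After collecting terms the recurrence should take the form
\[
Z_k^{n+1} = (1-2\alpha_k)\,Z_k^n + \alpha_k^-\,Z_{k-1}^n + \alpha_k^+\,Z_{k+1}^n + \Delta t\bigl[\mathcal{S}_k^n\,Z_k^n + \mathcal{Q}_k^n\,(Z_k^n)^2\bigr],
\]
with $\alpha_k\approx \beta(\sigma_k^n+\eps)$ and drift corrections $\alpha_k^\pm-\alpha_k$ of order $\beta\Delta x(|\bar w| + |\sigma'\bar w|)$; exactly as in the proof of Lemma~\ref{lem:Linfty_Lipschitz_estimate}, the \eqref{eq:CFL} condition (in particular the lower bound $\eps = \mathcal{O}(\Delta x)$ that dominates the drift) forces $(1-2\alpha_k,\alpha_k^-,\alpha_k^+)$ to be a probability vector. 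Using the Lipschitz bound $|\bar w_k^n|\leq \gamma_0$ from Lemma~\ref{lem:Linfty_Lipschitz_estimate} together with \eqref{eq:structural_condition_sigma}, the zeroth-order coefficient satisfies $|\mathcal{S}_k^n|\leq \Lambda = \gamma_0^2 S_2(M)$, while the quadratic coefficient obeys $\mathcal{Q}_k^n\geq 2+s_1(M)$ (the $2$ coming from $D^2_h(\bar w^2)$ and $s_1(M)$ from the $\sigma'$ contribution).

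Setting $m^n := \inf_k Z_k^n$, the convex combination then yields pointwise $Z_k^{n+1} \geq m^n + \Delta t[\mathcal{S}_k^n Z_k^n + \mathcal{Q}_k^n (Z_k^n)^2]$. If $m^n\leq 0$ the right-hand side is minimized at $Z_k^n = m^n$ and bounded below by $m^n + \Delta t\,F(m^n)$ with $F$ as in Lemma~\ref{lem:def_ODE_z}; if $m^n\geq 0$ the conclusion $m^{n+1}\geq 0\geq \underline z(t^{n+1})$ is immediate. Taking the infimum in $k$ produces $m^{n+1}\geq m^n + \Delta t\,F(m^n)$, a forward Euler step for $\dot z = F(z)$. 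Because by Lemma~\ref{lem:def_ODE_z} the trajectory $\underline z(t)$ is nondecreasing and stays below $-\Lambda/(2+s_1(M))$, on its trajectory $F$ is decreasing, so that $\underline z(t^{n+1}) \leq \underline z(t^n) + \Delta t\,F(\underline z(t^n))$; combined with the monotonicity of $z \mapsto z + \Delta t\,F(z)$ on the relevant range (guaranteed by the CFL on $\Delta t$), an induction from the trivial base $n=0$ (where $\underline z(0)=-\infty$) yields $m^n\geq \underline z(t^n)$. For indices strictly inside a boundary layer the linear interpolation \eqref{eq:def_linear_interpolation} gives $Z_k^{n+1}=0\geq \underline z(t^{n+1})$, and at the transition indices $K_{l,r}(n)$ a separate direct computation based on \eqref{eq:propagation_interfaces} and Remark~\ref{rmk:propagation_law_interpolation} delivers the same bound.

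The main obstacle is the bookkeeping in the discrete Leibniz expansion: the $\tfrac{\Delta x^2}{2}D^2_h f\cdot D^2_h g$ remainder and the various drift contributions must be organized so that they can be absorbed into the $\eps$-viscous diffusion without breaking the convex-combination structure, which is precisely why the lower bound $\eps\geq \gamma_0\Delta x(27+9s_1(M)+\ldots)$ is imposed in \eqref{eq:CFL}. A secondary subtlety is the transition index $K_r(n)$, where the three-point stencil defining $Z_k^{n+1}$ mixes one scheme value with one linear-interpolation value, and the inequality must be verified there by hand using the interface update \eqref{eq:propagation_interfaces}.
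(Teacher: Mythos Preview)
Your overall plan---derive a discrete evolution for $Z_k^n$ mirroring $\partial_t z=\sigma\partial_{xx}^2z+\ldots+(\sigma'+2)z^2$ and compare with the ODE subsolution $\underline z$---is exactly the paper's strategy. But two steps in your execution do not go through as written.

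\textbf{The recurrence does not have the form you claim.} When you apply $D_h^2$ to $(\sigma+\eps)Z+\bar w^2$, the quadratic and zeroth-order contributions are \emph{not} in $Z_k^n$ alone: the paper's exact identity (their (2.17)/(2.18)) gives
\[
\mathfrak{S}_v\,Z_k^n\,\frac{Z_{k-1}^n+2Z_k^n+Z_{k+1}^n}{4}
+2\left(\frac{Z_{k-1}^n+2Z_k^n+Z_{k+1}^n}{4}\right)^2
+\mathfrak{S}_{vv}W_2^2\,\frac{Z_{k-1}^n+2Z_k^n+Z_{k+1}^n}{4},
\]
so the ``source'' genuinely couples neighbours. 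Your split ``convex combination $+\,\Delta t[\mathcal S Z_k+\mathcal Q Z_k^2]$'' is therefore not available, and even if it were, the claim ``the right-hand side is minimised at $Z_k^n=m^n$'' is false for a convex parabola whose vertex lies above $m^n$. The paper bypasses this by proving full monotonicity $\partial Z_k^{n+1}/\partial Z_{k-1}^n,\partial Z_k^{n+1}/\partial Z_k^n,\partial Z_k^{n+1}/\partial Z_{k+1}^n\geq 0$ and then evaluating directly at $Z_{k-1}^n=Z_k^n=Z_{k+1}^n=\underline z(t^n)$. The derivatives of the quadratic terms bring in factors $|Z_j^n|\leq 2\gamma_0/\Delta x$, and absorbing these is precisely what forces the large constant $27+9s_1(M)+\ldots$ in the lower bound for~$\eps$.

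\textbf{The induction cannot start at $n=0$.} You invoke monotonicity of $z\mapsto z+\Delta t\,F(z)$ ``on the relevant range''; but $F'(z)=\Lambda+2(2+s_1)z$, so $1+\Delta t\,F'(z)\geq 0$ fails whenever $z\lesssim -1/\big(2(2+s_1)\Delta t\big)$, and $\underline z(t^n)\sim -1/\big((2+s_1)t^n\big)$ drops below this for the first few time steps. The paper's remedy is to observe that the Lipschitz bound of Lemma~\ref{lem:Linfty_Lipschitz_estimate} gives $Z_k^n\geq -2\gamma_0/\Delta x$ unconditionally, so the estimate $Z_k^n\geq\underline z(t^n)$ is \emph{automatic} for all $t^n\leq t^N:=\max\{t^n:\underline z(t^n)\leq -2\gamma_0/\Delta x\}$, and the induction is only run for $n\geq N$. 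This cutoff is also needed at the transition index $K_r(n)$: there the paper derives $Z_K^{n+1}=(1-a-b-c)Z_K^n+(a-b)Z_{K-1}^n$, and closing the estimate requires $1-2b-c\leq 1+\Delta t(2+s_1)\underline z(t^n)$, which in turn uses $\underline z(t^n)\geq -3\gamma_0/\Delta x$---again a consequence of starting at $n\geq N$.
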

\begin{proof}
Since $\underline{z}$ is monotone increasing with $\underline{z}(0)=-\infty$ the time $t^N=\max\{t^n:\,\underline{z}(t^n)\leq -2\gamma_0/\Delta x\}$ is well defined and positive, provided that $\Delta x,\Delta t$ are small enough. By Lemma~\ref{lem:Linfty_Lipschitz_estimate} we have $Z_{k}^n=\frac{w_{k+1}^n-w_{k}^n}{\Delta x}\geq -2\gamma_0/\Delta x$ and our estimate automatically holds if $t_n\leq t^N$. We argue now by induction on $n\geq N$.\\
{\bf Step 1: estimate in the support.}  Consider first any $k\in[K_l(n)+1,K_r(n)-1]$, so that $v_{k-1}^{n+1},v_k^{n+1},v_{k+1}^{n+1}$ are all computed from the finite difference equation \eqref{eq:scheme}. Applying the second order difference operator $A$ to \eqref{eq:scheme} and dividing by $\Delta x^2$, straightforward algebra leads to
\begin{align}
Z_{k}^{n+1}	=&  Z_k^n + \Delta t \Bigg{[}(\mathfrak{S}+\eps)\frac{AZ_{k}^n}{\Delta x^2}
+2 \left(\mathfrak{S}_x+W_{1}\right)\left(\frac{Z_{k+1}^n-Z_{k-1}^n}{2\Delta x}\right)
\nonumber\\
& \hspace{3cm}
+ \mathfrak{S}_{vv}(W_{2})^2\frac{Z_{k-1}^n+2Z_k^n+Z_{k+1}^n}{4}
\nonumber\\
   &\hspace{2cm}
   +\Bigg{\{}\mathfrak{S}_v Z_k^n\frac{Z_{k-1}^n+2Z_k^n+Z_{k+1}^n}{4} 
   + 2 \left(\frac{Z_{k-1}^n+2Z_k^n+Z_{k+1}^n}{4}\right)^2 \Bigg{\}}\Bigg{]}
 \label{eq:discrete_PDE_Z}
\end{align}
with
\begin{align*}
& \mathfrak{S}	:=  \frac{\sigma_{k-1}^n+2\sigma_{k}^n+\sigma_{k+1}^n}{4}\approx \sigma(v(x_k,t^n)),
\qquad
\mathfrak{S}_x := \frac{\sigma_{k+1}^n-\sigma_{k-1}^n}{2\Delta x}\approx \partial_x \sigma(v(x_k,t^n)),\\
& \mathfrak{S}_v := \frac{1}{2}\left(\frac{\sigma_{k+1}^{n}-\sigma_{k}^{n}}{v_{k+1}^{n}-v_{k}^{n}} + \frac{\sigma_{k}^{n}-\sigma_{k-1}^{n}}{v_{k}^{n}-v_{k-1}^{n}}\right)\approx \sigma'(v(x_k,t^n)),\\
&\mathfrak{S}_{vv}	 :=2\frac{(v_k^n-v_{k-1}^n)\sigma_{k+1}^n-(v_{k+1}^n-v_{k-1}^n)\sigma_{k}^n + (v_{k+1}^n-v_{k}^n)\sigma_{k-1}^n}{(v_{k+1}^n-v_{k}^n)(v_{k}^n-v_{k-1}^n)(v_{k+1}^n-v_{k-1}^n)}\approx \sigma''(v(x_k,t^n)),
\end{align*}
and
$$
W_{1} :=\frac{\overline{w}_{k-1}^n+2\overline{w}_{k}^n+\overline{w}_{k+1}^n}{4}\approx\partial_xv(x_k,t^n),
\qquad
W_{2} :=\overline{w}_{k}^n\approx\partial_xv(x_k,t^n)
$$
(recall that we write $\sigma_k^n=\sigma(v_k^n)$ and $\overline{w}_{k}^n=(v_{k+1}^n-v_{k-1}^n)/2\Delta x$). Note that \eqref{eq:discrete_PDE_Z} is nothing but the discrete equivalent of
\begin{equation}
\partial_t z=\sigma(v)\partial_{xx}^2 z
+2\Big{[}\partial_x \sigma(v)+\partial_x v\Big{]}\partial_x z
+\Big{[} \sigma''(v)|\partial_x v|^2\Big{]}z
+\Big{[}\sigma'(v)+2\Big{]}z^2
\label{eq:PDE_z=vxx}
\end{equation}
for $z=\partial_{xx}^2 v$, which is obtained differentiating twice $\partial_t v=\sigma(v)\partial_{xx}^2v+|\partial_x v|^2$ w.r.t. $x$. Let us give a formal proof that $z=\partial^2_{xx}v\geq \underline{z}(t)$ at the continuous level: since $0\leq v(x,t)\leq M$ we have $0<s_1(M)\leq \sigma'(v)$ and $|\sigma''(v)|\leq S_2(M)$, and recall that $|\partial_x v|\leq \gamma_0$. Using the definition of $\underline{z}(t)$ in Lemma~\ref{lem:def_ODE_z} it is easy to check that $\underline{z}(t)$ is a subsolution of \eqref{eq:PDE_z=vxx}. Since $\underline{z}(0)=-\infty\leq z(x,0)$ the comparison principle should give $z(x,t)\geq\underline{z}(t)$.
In order to reproduce this formal computation at the discrete level let us first rewrite \eqref{eq:discrete_PDE_Z} as
\begin{align}
Z_{k}^{n+1}=	&	\left[1-2\beta(\mathfrak{S}+\eps)   +   \frac{\beta \Delta x^2(W_{2})^2\mathfrak{S}_{vv}}{2}\right]Z_k^{n}
\nonumber\\
&	+\beta\left[(\mathfrak{S}+\eps)	+   \frac{\Delta x^2(W_{2})^2\mathfrak{S}_{vv}}{4}  -  \Delta x(\mathfrak{S}_x+W_{1})\right]Z_{k-1}^{n}
\nonumber\\
&	+\beta\left[(\mathfrak{S}+\eps)	+   \frac{\Delta x^2(W_{2})^2\mathfrak{S}_{vv}}{4}  +  \Delta x(\mathfrak{S}_x+W_{1})\right]Z_{k-1}^{n}
\nonumber\\
  & +\frac{\beta \Delta x^2}{8}\left(Z_{k-1}^n+2Z_{k}^n+Z_{k+1}^n\right)\left[Z_{k-1}^n+2(1+\mathfrak{S}_v)Z_{k}^n+Z_{k+1}^n\right].
  \label{eq:discrete_Zk^n+1}
\end{align}
We show now \eqref{eq:discrete_Zk^n+1} satisfies the discrete comparison principle, in the sense that $Z_{k}^{n+1}$ is non-decreasing in the three arguments $Z_{k-1}^n,Z_{k}^n,Z_{k+1}^n$. To this end we first note that
$$
|Z_{k}^n|=|(w_{k+1}^n-w_{k}^n)/\Delta x|\leq 2\gamma_0/\Delta x,
$$
and by our structural hypotheses \eqref{eq:structural_condition_sigma} and Lemma~\ref{lem:Linfty_Lipschitz_estimate} it is easy to check that
$$
\begin{array}{ccc}
0\leq \mathfrak{S}\leq \sigma(M)
&
|\mathfrak{S}_x|\leq S_1(M)\gamma_0,
&
0\leq \mathfrak{S}_v\leq S_1(M),\\

|\mathfrak{S}_{vv}|\leq S_2(M),
&
|W_{1}|\leq \gamma_0,
&
|W_{2}|\leq \gamma_0.
\end{array}
$$
Thus by the \eqref{eq:CFL} condition
\begin{align*}
\frac{\partial Z_{k}^{n+1}}{\partial Z_k^n} & =  1 -2\beta (\mathfrak{S}+\eps)+\frac{\beta \Delta x^2(W_{2})^2\mathfrak{S}_{vv}}{2}\\
  & +\frac{\beta \Delta x^2}{8}\Big{[}2\Big{(}Z_{k-1}^n+2(1+\mathfrak{S}_v)Z_{k}^n+Z_{k+1}^n\Big{)}
  + 2(1+\mathfrak{S}_v)\Big{(}Z_{k-1}^n+2Z_{k}^n+Z_{k+1}^n\Big{)}
  \Big{]}\\
  & \geq 1-\beta\left[2\Big(\sigma(M)+\eps\Big)+\frac{\Delta x^2 \gamma_0^2 S_2(M)}{2}+\gamma_0\Delta x\Big(4+3S_1(M)\Big)\right]\geq 0,
\end{align*}
\begin{align*}
\frac{\partial Z_{k}^{n+1}}{\partial Z_{k-1}^n} & = \beta\Bigg{[}(\mathfrak{S}+\eps)	+   \frac{\Delta x^2(W_{2})^2\mathfrak{S}_{vv}}{4}  -  \Delta x(\mathfrak{S}_x+W_{1})\\
  & \hspace{1cm}+\frac{\Delta x^2}{8}\Big\{(Z_{k-1}^n+2Z_{k}^n+Z_{k+1}^n)+(Z_{k-1}^n+2(1+\mathfrak{S}_v)Z_{k}^n+Z_{k+1}^n))\Big\}\Bigg{]}\\
  & \geq \beta\left[\eps-\gamma_0\Delta x\left\{\frac{\Delta x\gamma_0S_2(M)}{4}+\Big(S_1(M)+1\Big)+\frac{1}{2}\Big(4+S_1(M)\Big)\right\}\right]\geq 0,
\end{align*}
and similarly $\frac{\partial Z_{k}^{n+1}}{\partial Z_{k+1}^n}\geq 0$. By the induction hypothesis we see that $Z_{k}^{n+1}$ is greater or equal to the right-hand side of \eqref{eq:discrete_Zk^n+1} evaluated with $Z_{k-1}^n,Z_{k}^n,Z_{k+1}^n\geq \underline{z}(t^n)$, and using the structural assumptions $\mathfrak{S}_v\geq s_1(M)$ and $|\mathfrak{S}_{vv}|\leq S_2(M)$ we get
\begin{align*}
Z_{k}^{n+1}& \geq  \underline{z}(t^n)+ \beta \Delta x^2(W_{2})^2\mathfrak{S}_{vv}\,\underline{z}(t^n)+\beta \Delta x^2(2+\mathfrak{S}_v)\underline{z}^2(t^n)
\\
  & \geq \underline{z}(t^n)+ \Delta t\left[\gamma_0^2S_2(M)\underline{z}(t^n)+(2+s_1(M))\underline{z}^2(t^n)\right].
\end{align*}
In the righ-hand side we recognize $\underline{z}(t^n)+\Delta t F(\underline{z}(t^n))$ with $F$ as in Lemma~\ref{lem:def_ODE_z}. Since by construction $\dot{\underline{z}}=F(\underline{z})$ and $\underline{z}$ is concave we finally get
$$
Z_{k}^{n+1}\geq \underline{z}(t^n)+\dot{\underline{z}}(t^n)[t^{n+1}-t^n]\geq \underline{z}(t^{n+1})
$$
as required.\\
{\bf Step 2: estimate close to the interfaces.}
We only establish the AB estimate across the right interface and boundary layer, and write again $\zeta=\zeta_r$ and $K=K_r(n)$ to keep the notations light (the argument is identical to the left). Recall that for $x_k\in[x_{K},\zeta^{n+1}]$ the next step $v_{k}^{n+1}\geq 0$ is linearly interpolated by \eqref{eq:def_linear_interpolation}, and $v_{k}^{n+1}=0$ for $x_k\geq \zeta^{n+1}$. As a consequence $Av_k^{n+1}\geq 0$ for $k>K$ and \eqref{eq:Aronson_Benilan} is trivially satisfied there as $Z_k^{n+1}\geq 0>\underline{z}(t^{n+1})$. Hence we only need to look at $k=K$.

By definition of $K=K_r(n)$ we see that $w_{K}^{n+1}$, $w_{K+1}^{n+1}$ satisfy \eqref{eq:w_k^n+1} and \eqref{eq:w_k^n+1_interface}, namely
$$
w_K^{n+1}=w_K^{n}+(a+b)\Delta x Z_K^{n}-(a-b)\Delta x Z_{K+1}^{n}
\quad\mbox{and}\quad
w_{K+1}^{n+1}=w_{K+1}^n-c\Delta xZ_K^n
$$
with $a,b$ as in \eqref{eq:def_ab} with $k=K$ and $c$ as in \eqref{eq:def_c}. Subtracting and dividing by $\Delta x$ we get that $Z_{K}^{n+1}=(w_{K+1}^{n+1}-w_{K}^{n+1})/\Delta x$ can be expressed as
\begin{equation}
Z_{K}^{n+1}=(1-a-b-c)Z_{K}^n + (a-b)Z_{K-1}^n.
\label{eq:Z_K^n+1_interface}
\end{equation}
We claim as in step 1 that the right-hand side is nondecreasing in $Z_{K}^n,Z_{K-1}^n$. Indeed we already showed in the proof of Lemma~\ref{lem:Linfty_Lipschitz_estimate} that $a-|b|\geq 0$, and recalling that $(s^n)'=\zeta^{n+1}-x_{K(n)}\geq\zeta^{n}-x_{K(n)} \geq \Delta x$ we compute
\begin{align*}
1-a-b-c	&= 1 - \beta\left(\frac{\sigma_K^n+\sigma_{K-1}^n}{2}+\eps\right)\\
  & \hspace{1cm}-\beta \Delta x\left(\frac{1}{2}\frac{\sigma_K^n-\sigma_{K-1}^n}{v_K^n-v_{K-1}^n} w_K^n+\frac{w_{K-1}^n+2w_{K}^n+w_{K+1}^n}{4}\right)\\
&\hspace{2cm}-\frac{\beta \Delta x}{s_n'}\left((\sigma_K^n+\eps)-\Delta x\frac{3w_{K+1}^n+w_{K}^n}{4}\right)\\
  & \geq 1-\beta\left[
  \Big(\sigma(M)+\eps\Big) + \Delta x\left(\frac{S_1(M)\gamma_0}{2}+\gamma_0\right)
  + \Big(\sigma(M)+\eps+\gamma_0\Delta x\Big)
  \right]\geq 0,
\end{align*}
where the last inequality follows by the \eqref{eq:CFL} condition. Before evaluating \eqref{eq:Z_K^n+1_interface} with $Z_{K}^n,Z_{K-1}^n\geq \underline{z}(t^n)$ we first recall from Lemma~\ref{lem:discrete_speed_interfaces} that the interfaces propagate with discrete speed at most $\gamma_0$, and that by the CFL condition $\Delta t=\mathcal{O}(\Delta x^2)$. In particular $\Delta x \leq s_n\leq  (s^n)'=s_n+(\zeta^{n+1}-\zeta^n)\leq 2\Delta x +\mathcal{O}( \Delta x^2)$, thus $\Delta x\leq (s^n)'\leq 3\Delta x$ for small $\Delta x$ and
\begin{align*}
1-2b-c	& = 1-\beta \Delta x\left(\frac{\sigma_K^n-\sigma_{K-1}^n}{v_K^n-v_{K-1}^n}w_K + \frac{w_{K-1}^n+2w_{K}^n+w_{K+1}^n}{2}\right)\\
  & \hspace{3cm}-\frac{\beta \Delta x}{(s^n)'}\left((\sigma_K^n+\eps)-\Delta x\frac{3w_{K+1}^n+w_{K}^n}{4}\right)\\
  & \leq 1 + \beta\gamma_0\Delta x\big( S_1(M)+2  \big)-\frac{\beta \eps}{3} +\beta \gamma_0 \Delta x\\
  & \leq 1-\Delta t\big(2+s_1(M)\big)\frac{3\gamma_0}{\Delta x}
\end{align*}
by the \eqref{eq:CFL} condition ($\eps\geq c\gamma_0\Delta x$). For small $\Delta x,\Delta t$ and by definition of $t^N=\max\{t^{n'}:\,F(t^{n'})\leq -2\gamma_0/\Delta x\}$ it is easy to check that $F(t^N)\sim -2\gamma_0/\Delta x$, and because $\underline{z}$ is increasing and our induction is on $n\geq N$ we can assume that $-3\gamma_0/\Delta x<-2\gamma_0/\Delta x\approx \underline{z}(t^N)\leq \underline{z}(t^n)$ hence
$$
1-2b-c \leq 1+\Delta t\big(2+s_1(M)\big)\underline{z}(t^*)\leq 1+\Delta t\big(2+s_1(M)\big)\underline{z}(t^n).
$$
Evaluating \eqref{eq:Z_K^n+1_interface} with $Z_{k-1}^n,Z_{k}^n,Z_{k+1}^n\geq \underline{z}(t^n)$ thus gives
\begin{align*}
Z_{K}^{n+1}\geq (1-2b-c)\underline{z}(t^n)
  &	\geq \underline{z}(t^{n})+\Delta t\big(2+s_1(M)\big)\underline{z}^2(t^n)\\
  & \geq	\underline{z}(t^{n})+\Delta t\Big[\Lambda \underline{z}(t^n)  +  \big(2+s_1(M)\big)\underline{z}^2(t^n)\Big]\\
  & =\underline{z}(t^{n})+\Delta t F(\underline{z}(t^n)),
\end{align*}
and we conclude by concavity of $\underline{z}$ as in step 1.
%
\end{proof}
\begin{rmk}
For the pure PME nonlinearity $\Phi(s)=s^m$ one has $\sigma(r)=(m-1)r$ and therefore $s_1(M)=s_1=(m-1)$ and  $S_2(M)=0$ in \eqref{eq:structural_condition_sigma}. Tthe ODE for $\underline{z}$ then becomes $\dot{z}=(m+1)z^2$, thus $\underline{z}(t)=-1/(m+1)t$ in Lemma~\ref{lem:def_ODE_z} and we recover the optimal Aronson-B\'enilan estimate $\partial^2_{xx} v\geq -1/(m+1)t$. For general nonlinearities the optimal estimate \cite{CP82} takes the form $\partial_{xx}^2 v\geq -h(v)/t$ for some structural function $h$ related to $\Phi$. Unfortunately we were not able to reproduce the optimal computations at the discrete level, and we shall be content here with our lower bound $\partial_{xx}^2 v\geq \underline{z}(t)\sim-C(1+1/t)$.
\end{rmk}
%
%
\begin{lem}
There is $C=C(v^0)>0$ only such that
$$
\forall n\geq 0,\,k\notin [K_l(n),K_r(n)]:\qquad \left|\frac{v_k^{n+1}-v_k^n}{\Delta t}\right|\leq C.
$$
\label{lem:dv/dt_interface}
\end{lem}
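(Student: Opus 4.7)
The plan is to split the set $\{k\notin[K_l(n),K_r(n)]\}$ into regimes depending on the position of $x_k$ relative to $\z_r^n\leq\z_r^{n+1}$; by symmetry I only treat $k\geq K_r(n)+1$. Using Lemma~\ref{lem:discrete_speed_interfaces} one has $\z_r^{n+1}-\z_r^n\leq\gamma_0\Delta t$, and three cases arise: (a) $x_k>\z_r^{n+1}$, where $v_k^n=v_k^{n+1}=0$; (b) $\z_r^n<x_k\leq\z_r^{n+1}$, where $v_k^n=0$ but $v_k^{n+1}$ is non-trivial; and (c) $x_k\leq\z_r^n$, which by the definition of $K_r(n)$ can only occur for the single index $k=K_r(n)+1$. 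Case (a) is trivial, and case (b) follows immediately from \eqref{eq:def_linear_interpolation} and Lemma~\ref{lem:Linfty_Lipschitz_estimate}: the slope of $v^{n+1}$ inside the right boundary layer is $-w_{K_r(n)+1}^{n+1}$ with magnitude at most $\gamma_0$, so $|v_k^{n+1}|\leq\gamma_0(\z_r^{n+1}-x_k)\leq\gamma_0^2\Delta t$.

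Case (c) is the main obstacle. Introducing $V^n:=v_{K_r(n)}^n$, $V^{n+1}:=v_{K_r(n)}^{n+1}$, $\tilde w^n:=-V^n/s_r^n$, and $\tilde w^{n+1}:=-V^{n+1}/(s_r^n)'$, the monotonicity $K_r(n-1)\leq K_r(n)$ and the linearity of \eqref{eq:def_linear_interpolation} force both $v_{K_r(n)+1}^n=V^n+\Delta x\,\tilde w^n$ and $v_{K_r(n)+1}^{n+1}=V^{n+1}+\Delta x\,\tilde w^{n+1}$, whence
\[
v_{K_r(n)+1}^{n+1}-v_{K_r(n)+1}^n=(V^{n+1}-V^n)+\Delta x\,(\tilde w^{n+1}-\tilde w^n).
\]
I bound the first term by applying the scheme \eqref{eq:scheme} at the last support index $k=K_r(n)$. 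The decisive observation is that the Lipschitz bound together with $v^n=0$ at $\z_r^n$ gives $0\leq V^n\leq\gamma_0 s_r^n=\mathcal{O}(\Delta x)$, hence $\sigma(V^n)\leq S_1(M)V^n=\mathcal{O}(\Delta x)$ by \eqref{eq:structural_condition_sigma}; since also $\eps=\mathcal{O}(\Delta x)$ by \eqref{eq:CFL}, the potentially unbounded one-sided second difference $Z_{K_r(n)}^n=(\tilde w^n-w_{K_r(n)}^n)/\Delta x=\mathcal{O}(1/\Delta x)$ is exactly cancelled by the coefficient $\sigma(V^n)+\eps$, while $(\overline{w}_{K_r(n)}^n)^2\leq\gamma_0^2$ is harmless. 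Altogether $|V^{n+1}-V^n|\leq C\Delta t$.

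For the second term I use the elementary identity
\[
\tilde w^{n+1}-\tilde w^n=\frac{-s_r^n(V^{n+1}-V^n)+V^n(\z_r^{n+1}-\z_r^n)}{s_r^n(s_r^n)'},
\]
whose denominator is bounded below by $(\Delta x)^2$ and whose numerator is $\mathcal{O}(\Delta x\cdot\Delta t)$ thanks to the previous bound on $V^{n+1}-V^n$, the estimate $V^n=\mathcal{O}(\Delta x)$, and the finite-speed bound $\z_r^{n+1}-\z_r^n\leq\gamma_0\Delta t$ from Lemma~\ref{lem:discrete_speed_interfaces}. Multiplying through by $\Delta x$ then gives $\Delta x\,|\tilde w^{n+1}-\tilde w^n|=\mathcal{O}(\Delta t)$, concluding the proof. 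The heart of the argument is the precise degeneracy $\sigma(V^n)=\mathcal{O}(\Delta x)$ at the last support index, which is exactly what prevents the one-sided second difference at the interface from making the discrete time derivative blow up.
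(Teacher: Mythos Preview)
Your argument is correct and is precisely the computation carried out in \cite[Lemma 2.4]{DBH84}, to which the paper simply defers.  The case split (a)--(c) is right, the key cancellation $\sigma(V^n)+\eps=\mathcal{O}(\Delta x)$ against $Z_{K_r(n)}^n=\mathcal{O}(1/\Delta x)$ is exactly the mechanism, and your algebraic identity for $\tilde w^{n+1}-\tilde w^n$ is equivalent to formula~\eqref{eq:w_k^n+1_interface} already derived in the proof of Lemma~\ref{lem:Linfty_Lipschitz_estimate}.

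One small gap: your identity $v_{K_r(n)+1}^n=V^n+\Delta x\,\tilde w^n$ relies on $v^n$ being linear in the right boundary layer, which is guaranteed by \eqref{eq:def_linear_interpolation} only for $n\geq 1$ (you implicitly acknowledge this by invoking $K_r(n-1)$).  At $n=0$ the values $v_k^0=v^0(x_k)$ need not be linear between $x_{K_r(0)}$ and $\zeta_r^0$, so in general $w_{K_r(0)+1}^0\neq\tilde w^0$ and the second term is only $\mathcal{O}(\Delta x)$, not $\mathcal{O}(\Delta t)$.  This is harmless for the applications in the paper (Corollary~\ref{cor:L1-bounds_vxx_vt} carries a $1/t^n$ factor and the consistency argument in Section~\ref{subsection:CV_one_patch} starts at $N_0\geq 1$), and is typically handled either by linearly interpolating $v^0$ in the initial boundary layer or by simply stating the lemma for $n\geq 1$; but you should flag it.
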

\begin{proof}
The argument is identical to \cite[Lemma 2.4]{DBH84}.
\end{proof}
Combining Lemma~\ref{lem:Aronson_Benilan_estimate} and Lemma~\ref{lem:dv/dt_interface} we get
\begin{cor}
There is $C=C(v^0)>0$ such that
\begin{equation}
\sum\limits_k\left|\frac{Av_k^n}{\Delta x^2}\right|\Delta x +\sum\limits_k\left|\frac{v_{k}^{n+1}-v^n_k}{\Delta t}\right|\Delta x\leq C\left(1+\frac{1}{t^n}+T\right)
\label{eq:L1-bounds_vxx_vt}
\end{equation}
for all $t^n\leq T$.
\label{cor:L1-bounds_vxx_vt}
\end{cor}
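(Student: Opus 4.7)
The plan is to combine the discrete Aronson--B\'enilan bound of Lemma~\ref{lem:Aronson_Benilan_estimate} with the boundary-layer time-derivative bound of Lemma~\ref{lem:dv/dt_interface}, using compact support of $\{v_k^n\}_k$ to telescope the discrete Laplacian.

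For the first sum I would split $|Z_k^n|=Z_k^n+2(Z_k^n)^-$. Since $v_k^n$ is compactly supported in $k$, the telescoping identity
\[
\sum_k Z_k^n\,\Delta x=\frac{1}{\Delta x}\sum_k\bigl(v_{k-1}^n-2v_k^n+v_{k+1}^n\bigr)=0
\]
reduces matters to the negative part: $\sum_k|Z_k^n|\Delta x=2\sum_k(Z_k^n)^-\Delta x$. Lemma~\ref{lem:Aronson_Benilan_estimate} gives the pointwise bound $(Z_k^n)^-\leq|\underline{z}(t^n)|$, and Lemma~\ref{lem:def_ODE_z} provides the sharp asymptotics $|\underline{z}(t)|\leq C_0+C_1/t$. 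By Lemma~\ref{lem:discrete_speed_interfaces}, the support of $\{v_k^n\}_k$ lies in an interval of length at most $L_0+2\gamma_0 t^n$ where $L_0:=\zeta_r(0)-\zeta_l(0)$. Multiplying yields
\[
\sum_k(Z_k^n)^-\Delta x\leq\bigl(L_0+2\gamma_0 t^n+O(\Delta x)\bigr)\bigl(C_0+C_1/t^n\bigr)\leq C\bigl(1+1/t^n+t^n\bigr)\leq C(1+1/t^n+T).
\]

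For the second sum I would split $k$ into the numerical support $[K_l(n),K_r(n)]$ and its complement. Inside, the scheme~\eqref{eq:scheme} and the $L^\infty$/Lipschitz bounds of Lemma~\ref{lem:Linfty_Lipschitz_estimate} give the pointwise estimate
\[
\left|\frac{v_k^{n+1}-v_k^n}{\Delta t}\right|\leq(\sigma(M)+\eps)|Z_k^n|+\gamma_0^2,
\]
and summing reduces to the first-sum bound plus $\gamma_0^2$ times the support measure, both absorbed into $C(1+1/t^n+T)$. Outside, Lemma~\ref{lem:dv/dt_interface} provides a uniform constant bound, and since $v_k^n$ and $v_k^{n+1}$ are both zero except in the boundary layers of thickness $O(\Delta x)$ (hence $O(1)$ meshpoints), this part contributes only $O(\Delta x)$.

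The main subtlety is obtaining the additive form $C(1+1/t^n+T)$ rather than the naive product $C(1+T)(1+1/t^n)$. This relies on combining the sharp asymptotics of $\underline{z}$ (bounded as $t\to\infty$ by Lemma~\ref{lem:def_ODE_z}, behaving like $-1/t$ as $t\to 0$) with the \emph{linear-in-$t$} growth of the support from Lemma~\ref{lem:discrete_speed_interfaces}, so that the cross term $(C_1/t^n)(2\gamma_0 t^n)=O(1)$ and no spurious $T/t^n$ factor arises.
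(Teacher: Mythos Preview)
Your proof is correct and follows essentially the same approach as the paper: the paper also writes $|Z_k^n|\leq Z_k^n+2|\underline{z}(t^n)|$, sums over the support of length $C(1+t^n)$ (implicitly using the same telescoping you make explicit), and then handles the time derivative exactly as you do via \eqref{eq:scheme} in the interior and Lemma~\ref{lem:dv/dt_interface} in the boundary layers. Your version is arguably cleaner in that it spells out why $\sum_k Z_k^n\Delta x=0$, which the paper leaves implicit.
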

\begin{proof}
From Lemma~\ref{lem:Aronson_Benilan_estimate} and $-C(1+1/t)\leq \underline{z}(t)\leq 0$ we see that
$$
\left|\frac{Av_k^n}{\Delta x^2}\right|	\leq	\frac{Av_k^n}{\Delta x^2}+2|\underline{z}(t^n)|\leq \frac{Av_k^n}{\Delta x^2} +C\left(1+\frac{1}{t^n}\right).
$$
Multiplying by $\Delta x$ and summing over $k$'s with $v^n_k=0$ outside an interval of length $C(1+t^n)$ (Lemma~\ref{lem:discrete_speed_interfaces}) we get the first part of the estimate
$$
\sum\limits_k\left|\frac{Av_k^n}{\Delta x^2}\right|\Delta x\leq C\left(1+\frac{1}{t^n}\right)(1+t^n)\leq C(1+1/t^n+T).
$$
Inside the support $k\in[K_l(n),K_r(n)]$ the time derivative can be estimated from \eqref{eq:scheme} as
\begin{align*}
\left|\frac{v_k^{n+1}-v_k^n}{\Delta t}\right| & =\left|\Big(\sigma(v_k^n)+\eps\Big)\frac{Av_k^n}{\Delta x^2}+\left(\frac{v_{k+1}^n-v_{k-1}^n}{2\Delta x}\right)^2\right|\\
  & \leq \Big(\sigma(M)+\eps\Big)\left|\frac{Av_k^n}{\Delta x^2}\right|+\gamma_0^2\leq C\left(\left|\frac{Av_k^n}{\Delta x^2}\right|+1\right),
\end{align*}
and inside the boundary layers of thickness $\Delta x\leq s^n\leq 2\Delta x$ the time derivative is estimated by Lemma~\ref{lem:dv/dt_interface}. Multiplying by $\Delta x$ and summing over $k$'s as before gives the second part of the estimate and ends the proof.
\end{proof}
%
We end this section with uniform H\"oder estimates in time up to $t=0^+$, which are inherited from the initial Lipschitz regularity for $v^0(x)$.
\begin{prop}
For any $T>0$ there is $C=C(T,v^0)>0$ such that
$$
|v_{k}^n-v_k^m|\leq C|t^n-t^m|^{1/2}
$$
for all $t^n,t^m\in[0,T]$.
\label{prop:Holder}
\end{prop}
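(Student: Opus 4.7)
The plan is to follow the classical ``spatial averaging + time increment'' trick of Gilding/Kruzhkov, adapted to the present discrete setting (and essentially the strategy of \cite{DBH84} for the pure PME). Fix $k\in\Z$, $0\leq t^m<t^n\leq T$, and set $\tau:=t^n-t^m$. Pick a cutoff/mollifier scale $\delta>0$ (to be optimized below) and a nonnegative smooth $\eta_\delta$ supported in $[-\delta,\delta]$ with $\int\eta_\delta=1$. Denoting $\bar v^p:=\sum_j \eta_\delta(x_j-x_k)\,v_j^p\,\Delta x$, the Lipschitz bound from Lemma~\ref{lem:Linfty_Lipschitz_estimate} immediately gives
$$
|v_k^p-\bar v^p|\leq \gamma_0\,\delta,\qquad p\in\{n,m\},
$$
so it suffices to bound $|\bar v^n-\bar v^m|$ in terms of $\tau$ and $\delta$ and then optimize.

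Next I would telescope $\bar v^n-\bar v^m=\sum_{p=m}^{n-1}(\bar v^{p+1}-\bar v^p)$ and estimate each increment via the scheme. Split the sum $\sum_j \eta_\delta(x_j-x_k)(v_j^{p+1}-v_j^p)\Delta x$ into (i) indices $j\in[K_l(p)+1,K_r(p)-1]$ where \eqref{eq:scheme} applies, and (ii) indices lying in the two boundary layers. For (i), replace $(v_j^{p+1}-v_j^p)/\Delta t$ by $(\sigma_j^p+\eps)Z_j^p+(\bar w_j^p)^2$, then do a \emph{discrete integration by parts} moving both differences off $v$ onto $\eta_\delta$:
\begin{align*}
\sum_j \eta_\delta(x_j-x_k)\,(\sigma_j^p+\eps)Z_j^p\,\Delta x
&=\sum_j \big[\text{second difference of }\eta_\delta(\sigma+\eps)\big]\,v_j^p\,\Delta x+\text{b.t.},
\end{align*}
so the interior contribution is bounded by $C(\|\eta_\delta''\|_\infty+\|\eta_\delta'\|_\infty)\cdot M\cdot\delta+C\gamma_0^2\leq C(1+\delta^{-1})$ using $\|\eta_\delta^{(r)}\|_\infty\leq C\delta^{-1-r}$ and integrating $\eta_\delta$ against a set of measure $\leq 2\delta$. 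For the quadratic term, $(\bar w_j^p)^2\leq \gamma_0^2$ by Lemma~\ref{lem:Linfty_Lipschitz_estimate}, giving an $O(1)$ contribution. For (ii) I invoke Lemma~\ref{lem:dv/dt_interface}, which bounds $|(v_j^{p+1}-v_j^p)/\Delta t|\leq C$ pointwise in the boundary layers of total thickness $O(\Delta x)$; since $\|\eta_\delta\|_\infty\leq C\delta^{-1}$, this produces a further $O(\Delta x/\delta)$ term that is harmless for $\delta\gtrsim \Delta x$. Summing over $p$,
$$
|\bar v^n-\bar v^m|\leq C\big(1+\delta^{-1}\big)\,\tau,
$$
uniformly in the mesh parameters, with $C=C(T,v^0)$. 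Combining with the averaging error yields
$$
|v_k^n-v_k^m|\leq 2\gamma_0\delta+C(1+\delta^{-1})\tau,
$$
and choosing $\delta=\sqrt{\tau}$ (legitimate as long as $\sqrt{\tau}\geq\Delta x$; the regime $\tau<\Delta x^2$ is handled directly since then $\tau=\Delta t$ is a single time step, controlled by Lemmas~\ref{lem:Linfty_Lipschitz_estimate}--\ref{lem:dv/dt_interface}) gives the desired $|v_k^n-v_k^m|\leq C\sqrt{\tau}$.

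The main obstacle I expect is the careful bookkeeping in the discrete integration by parts near the two free boundaries: the stencil of $Z_j^p$ involves $v_{j\pm1}^p$, which may sit in the boundary layers where \eqref{eq:scheme} does not hold. I would handle this by writing the summation-by-parts identity with explicit endpoint contributions at $j=K_l(p),K_r(p)$ and estimating these endpoint terms via the linear interpolation formula \eqref{eq:def_linear_interpolation} together with the Lipschitz bound $|w_{K_{l,r}(p)\pm1}^p|\leq\gamma_0$ from Lemma~\ref{lem:Linfty_Lipschitz_estimate}; since there are only $O(1)$ such endpoints and $\|\eta_\delta\|_\infty\leq C\delta^{-1}$, their total contribution is again $O(\Delta x/\delta)$ and thus absorbed. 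The quadratic penultimate term $|\bar w|^2$ is handled trivially by the uniform Lipschitz bound, and the semiconvexity from Lemma~\ref{lem:Aronson_Benilan_estimate} is actually not needed in this step (it only enters indirectly through Corollary~\ref{cor:L1-bounds_vxx_vt} as a consistency check on the order of magnitude of the error terms).
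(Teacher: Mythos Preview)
Your strategy (Kruzhkov-type spatial averaging, then telescoping in time and summing by parts) is genuinely different from the paper's, which instead follows Gilding's barrier method: on a cylinder $Q=[x_{k_0}-r,x_{k_0}+r]\times[t^{n_0},t^{n_1}]$ one sets $V_k^n=v_k^n-v_{k_0}^{n_0}-\gamma_0 r-\tfrac{H}{r^2}\big[(x_k-x_{k_0})^2+c(t^n-t^{n_0})\big]$ with $H=\max_n|v_{k_0}^n-v_{k_0}^{n_0}|$ and $c=2(\sigma(M)+\eps)+\gamma_0 r$, proves $V_k^n\le 0$ by a discrete maximum principle (three cases: outside the support, inside a boundary layer, and in the interior where \eqref{eq:scheme} holds), reads off $H\le\gamma_0 r+cHs/r^2$, and optimizes $r\sim s^{1/2}$. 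That route uses no summation by parts and no mollifier, only the convex barrier and the same $a,b$ coefficients already checked in Lemma~\ref{lem:Linfty_Lipschitz_estimate}; the boundary layers are handled by a one-line case analysis.

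Your route can be made to work, but the key summation-by-parts step is wrong as written. When you ``move both differences off $v$'' onto $\eta_\delta(\sigma(v)+\eps)$ you obtain the second discrete difference of a \emph{product}; the Leibniz expansion contains a term $\eta_\delta\cdot A\sigma(v_j^p)/\Delta x^2$, which itself carries $Z_j^p$ and is not bounded by $\|\eta_\delta''\|_\infty+\|\eta_\delta'\|_\infty$ (pointwise it is only $\le 2S_1\gamma_0/\Delta x$, and the $L^1$ bound of Corollary~\ref{cor:L1-bounds_vxx_vt} blows up at $t=0$, spoiling uniformity). Even the ``good'' piece $(\sigma+\eps)\,A\eta_\delta/\Delta x^2\cdot v_j\,\Delta x$ summed over the support of $\eta_\delta$ scales like $\|\eta_\delta''\|_\infty M\delta\sim\delta^{-2}$, not $\delta^{-1}$, so your optimization would only deliver a $\tau^{1/3}$ modulus. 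The fix is to sum by parts \emph{once}, landing on $w_j^p$: then $\big|\sum_j\eta_\delta(\sigma_j^p+\eps)Z_j^p\Delta x\big|\le\sum_j\big|\delta^-[\eta_\delta(\sigma+\eps)]_j\big|\,|w_j^p|+\text{b.t.}$, and since $|w_j^p|\le\gamma_0$ while $\big|\delta^-[\eta_\delta(\sigma+\eps)]_j\big|\le\big(\|\eta_\delta'\|_\infty(\sigma(M)+\eps)+\|\eta_\delta\|_\infty S_1\gamma_0\big)\Delta x$ by Lipschitz regularity of both factors, the sum over $\sim\delta/\Delta x$ indices is indeed $O(\delta^{-1})$. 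With this correction the rest of your argument (boundary-layer contributions via Lemma~\ref{lem:dv/dt_interface}, optimization $\delta=\sqrt\tau$) goes through, and the Aronson--B\'enilan estimate is, as you suspected, not needed here.
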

The proof is almost identical to \cite[Lemma 2.7]{DBH84}, and the argument is a discrete version of that in \cite{G76}. However we will need to make sure in Section~\ref{section:two_patches} that the proof carries out for the hole-filling problem so we give nonetheless the full details for the sake of completeness.
\begin{proof}
We argue locally in cylinders
$$
Q=[x_{k_0}-r,x_{k_0}+r]\times[t^{n_0},t^{n_1}],
$$
where $x_{k_0}$ and $0\leq t^{n_0}\leq t^{n_1}\leq T$ are fixed and $r$ is a multiple of $\Delta x$ to be adjusted.\\
{\bf Step 1:} letting
$$
\begin{array}{c}
H:=\max\limits_{n_0\leq n\leq n_1}\,|v_{k_0}^n-v_{k_0}^{n_0}|,\qquad c:= 2\Big(\sigma(M)+\eps\Big)+\gamma_0 r\\
 V_k^n:= v_k^n-v_{k_0}^{n_0}-\gamma_0 r -\frac{H}{r^2}\left[(x_k-x_{k_0})^2+c(t_n-t_{n_0})\right],
 \end{array}
$$
we claim that
\begin{equation}
V_k^n\leq 0\qquad \mbox{for all }(x_k,t^n)\in Q.
\label{eq:V_k^n_leq_0}
\end{equation}
Arguing by induction on $n$, \eqref{eq:V_k^n_leq_0} holds for $n=n_0$ as $V_{k}^{n_0}\leq \left|v_{k}^{n_0}-v_{k_0}^{n_0}\right|-\gamma_0r\leq 0$ since $|x_k-x_{k_0}|\leq r$ and $|w^n_k|\leq \gamma_0$. For the induction step we consider three cases: (i) $x_k\notin[\zeta_l^{n+1},\zeta_r^{n+1}]$, (ii) $x_k$ is inside the boundary layer, and (iii) $x_k$ is inside the numerical support where \eqref{eq:scheme} holds.

In the first case we have $v_k^{n+1}=0$ and our claim immediately holds by definition of $V_{k}^{n+1}$ with $v_{k_0}^{n_0}\geq 0$. For (ii) we have $x_k\in[x_{K(n)},\zeta^{n+1}]$, and we have already shown that $(s^n)'=|\zeta^{n+1}-x_{K(n)}|\leq 3\Delta x$ for small $\Delta x$. In the boundary layer $v_{k}^{n+1}$ is computed by linear interpolation with slope $|w_k^{n+1}|\leq \gamma_0$ and therefore
$$
V_{k}^{n+1}\leq v_{k}^{n+1}-\gamma_0\leq \gamma_0 .3\Delta x-\gamma_0 r\leq 0
$$
provided that $r\geq 3\Delta x$, which will be ensured in step 2. In the last case (iii) we consider the linearized operator $L$ of \eqref{eq:scheme}, whose action on any sequence $a_k^n$ is defined as
$$
La_k^{n+1}:=\frac{a_k^{n+1}-a_k^n}{\Delta t}-\Big(\sigma(v_k^n)+\eps\Big)\frac{A a_k^n}{\Delta x^2}-\left(\frac{v_{k+1}^n-v_{k-1}^n}{2\Delta x}\right)\left(\frac{a_{k+1}^n-a_{k-1}^n}{2\Delta x}\right).
$$
Applying $L$ to $V_k^{n+1}$ with $Lv_k^{n+1}=0$ as in \eqref{eq:scheme}, it is easy to compute
\begin{align*} 
LV_{k}^{n+1}	& =\frac{H}{r^2}\left[-c + 2\Big(\sigma(v_k^n)+\eps\Big)+\left(\frac{v_{k+1}^n-v_{k-1}^n}{2\Delta x}\right)\left(x_k-x_{k_0}\right)\right] \\
  & \leq \frac{H}{r^2}\left[-c+2\Big(\sigma(M)+\eps\Big)+\gamma_0r\right]\leq 0
\end{align*}
by definition of $c$. The inequality $LV_k^{n+1}\leq 0$ can then be rewritten as
$$
V_{k}^{n+1}\leq (1-2a)V_{k}^n+(a-b)V_{k-1}^n+(a+b)V_{k+1}^{n}
$$
with coefficients $a,b$ exactly as in \eqref{eq:def_ab_Linfty_Lipschitz}. We already showed in the proof of Lemma~\ref{lem:Linfty_Lipschitz_estimate} that $0\leq a \leq 1/2$ and $|b|\leq a$. In particular the above right-hand side is a convex combination of $V_{k-1}^n,V_{k}^n,V_{k+1}^n$, thus $V_{k}^{n+1}\leq 0$ as desired.\\
{\bf Step 2.} Choosing $k=k_0$ in $V_k^n\leq 0$ we see that $v_{k_0}^n-v_{k_0}^{n_0}\leq \gamma_0r+\frac{cH}{r^2}|t^{n_1}-t^{n_0}|$, and in a similar way we get the same upper bound for $v_{k_0}^{n_0}-v_{k_0}^{n}$. Taking the maximum over $n\in [n_0,n_1]$ and writing $s=|t^{n_1}-t^{n_0}|$ we see by definition of $H$ that
\begin{equation}
H\leq \gamma_0 r+\frac{cH}{r^2}s.
\label{eq:H_leq}
\end{equation}
Choose now $r$ to be a multiple of $\Delta x$ such that
$$
r_1+3\Delta x\leq r\leq r_1+4\Delta x,
$$
where $r_1>0$ is the largest root of
$$
\rho^2-2cs=\rho^2-2\gamma_0s\rho+4\Big(\sigma(M)+\eps\Big)s=0.
$$
In particular $3\Delta x \leq r$ as required in step 1, and it is easy to check that $r_1\lesssim Cs^{1/2}$ when $s\leq T$. Moreover $cs/r^2\leq 1/2$ and \eqref{eq:H_leq} give
$$
H/2\leq \gamma_0r \leq \gamma_0(r_1+4\Delta x)\leq C(s^{1/2}+\Delta x).
$$
Now $s=|t^{n_1}-t^{n_0}|$ and $\Delta x =\sqrt{\Delta t/\beta}\leq \beta^{-1/2}|t^{n_1}-t^{n_0}|^{1/2}$, so finally
$$
|v_{k_0}^{n_1}-v_{k_0}^{n_0}|\leq H\leq C|t^{n_1}-t^{n_0}|^{1/2}
$$
and the proof is complete.
\end{proof}
%

%
%
\subsection{Convergence of the approximate solution and interfaces}
\label{subsection:CV_one_patch}
Denoting the mesh parameters $h=(\Delta t,\Delta x)$ and $L_k^n,U_k^n$ the lower and upper triangles in Figure~\ref{fig:FIG2},
\begin{figure}[h!]
 \def\svgwidth{.7\textwidth}
 \begin{center}
\begingroup%
  \makeatletter%
  \providecommand\color[2][]{%
    \errmessage{(Inkscape) Color is used for the text in Inkscape, but the package 'color.sty' is not loaded}%
    \renewcommand\color[2][]{}%
  }%
  \providecommand\transparent[1]{%
    \errmessage{(Inkscape) Transparency is used (non-zero) for the text in Inkscape, but the package 'transparent.sty' is not loaded}%
    \renewcommand\transparent[1]{}%
  }%
  \providecommand\rotatebox[2]{#2}%
  \ifx\svgwidth\undefined%
    \setlength{\unitlength}{841.88974609bp}%
    \ifx\svgscale\undefined%
      \relax%
    \else%
      \setlength{\unitlength}{\unitlength * \real{\svgscale}}%
    \fi%
  \else%
    \setlength{\unitlength}{\svgwidth}%
  \fi%
  \global\let\svgwidth\undefined%
  \global\let\svgscale\undefined%
  \makeatother%
  \begin{picture}(1,0.70707072)%
    \put(0,0){\includegraphics[width=\unitlength]{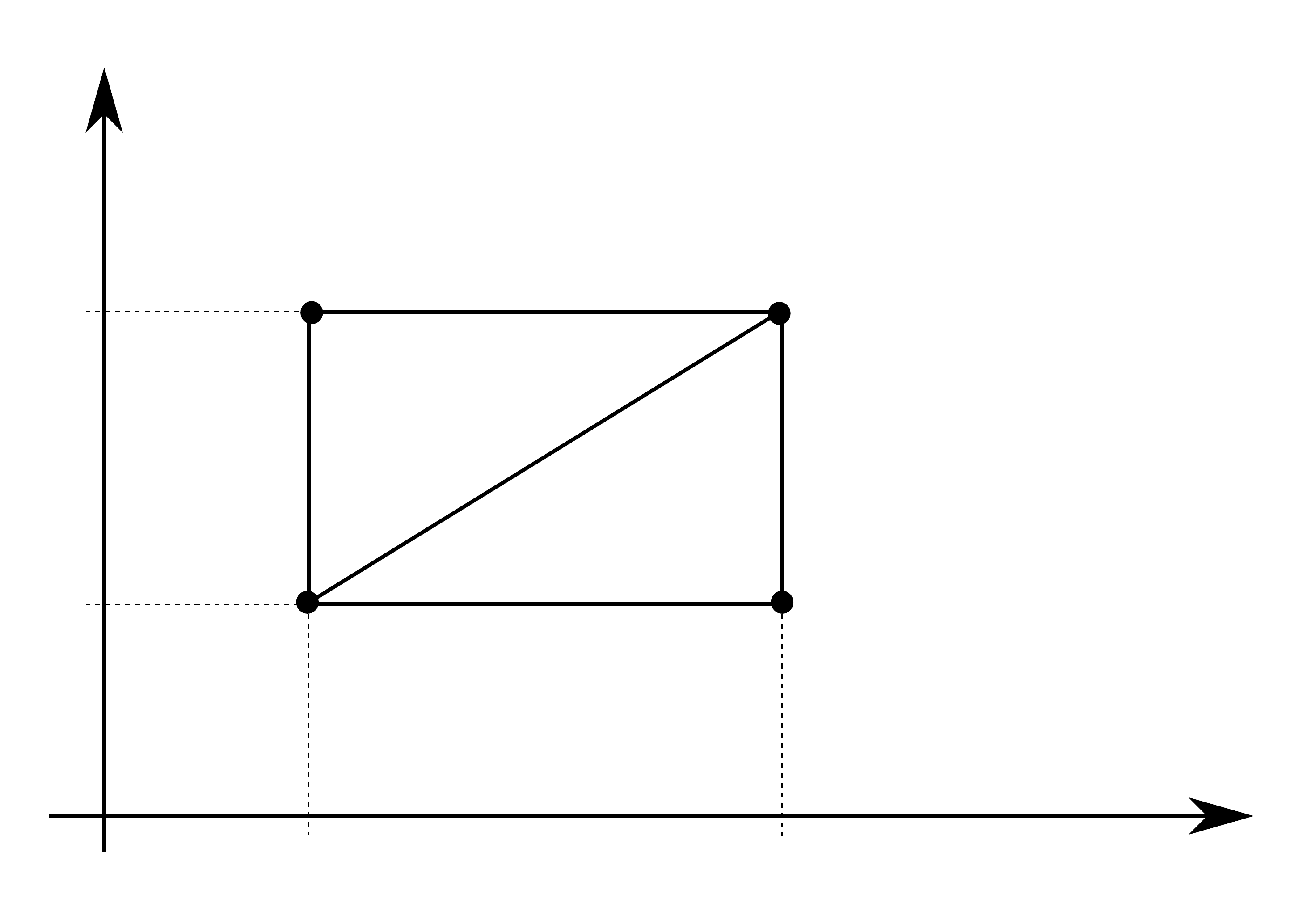}}%
    \put(0.90972295,0.04297841){\color[rgb]{0,0,0}\makebox(0,0)[lb]{\smash{$x$}}}%
    \put(0.02188702,0.24294591){\color[rgb]{0,0,0}\makebox(0,0)[lb]{\smash{$t^n$}}}%
    \put(0.00984916,0.46509914){\color[rgb]{0,0,0}\makebox(0,0)[lb]{\smash{$t^{n+1}$}}}%
    \put(0.02517007,0.62049697){\color[rgb]{0,0,0}\makebox(0,0)[lb]{\smash{$t$}}}%
    \put(0.22134711,0.03939664){\color[rgb]{0,0,0}\makebox(0,0)[lb]{\smash{$x_k$}}}%
    \put(0.57453424,0.04158532){\color[rgb]{0,0,0}\makebox(0,0)[lb]{\smash{$x_{k+1}$}}}%
    \put(0.46291044,0.3097013){\color[rgb]{0,0,0}\makebox(0,0)[lb]{\smash{$L_k^n$}}}%
    \put(0.29766345,0.39396634){\color[rgb]{0,0,0}\makebox(0,0)[lb]{\smash{$U_k^n$}}}%
  \end{picture}%
\endgroup%

 \end{center}
  \caption[Figure~\ref{fig:FIG2}]{linear interpolation domains}
 \label{fig:FIG2}
 \end{figure}
we first define the continuous and piecewise linear interpolation
\begin{equation}
v_h(x,t):=
\left\{
\begin{array}{ll}
v_k^n+(x-x_k)\frac{v_{k+1}^n-v_k^n}{\Delta x}+(t-t^n) \frac{v_{k+1}^{n+1}-v_{k+1}^n}{\Delta t},	&(x,t)\in L_k^n\\
v_{k+1}^{n+1}+(x-x_{k+1})\frac{v_{k+1}^{n+1}-v_k^{n+1}}{\Delta x}+(t-t^{n+1}) \frac{v_{k}^{n+1}-v_{k}^n}{\Delta t},	&(x,t)\in U_k^n
\end{array}
\right..
\label{eq:def_interpolation_vh_meshpoints}
\end{equation}

We also interpolate the interfaces by the piecewise linear curves
\begin{equation}
 \zeta_{h,lr}(t):=  \zeta^n_{lr}+(t-t^n)\frac{\zeta_{lr}^{n+1}-\zeta_{lr}^{n}}{\Delta t}, \qquad t\in [t^n,t^{n+1}].
\label{eq:def_interpolation_interface_meshpoints}
\end{equation}
If $Q_T=\R\times (0,T)$ the estimates from Section~\ref{subsection:estimates_one_patch} can be summarized as
\begin{equation}
0 \leq v_h(x,t)\leq M
\quad\mbox{and}\quad
|\partial_x v_h(x,t)|\leq \gamma_0
\qquad\mbox{a.e. in }Q_T,
\label{eq:estimate_Linfty_Lipschitz_vh}
\end{equation}
\begin{equation}
\forall t_1,t_2\in[0,T]:\qquad |v_h(x,t_1)-v_h(x,t_2)| \leq C(T,v^0)|t_1-t_2|^{1/2},
\label{eq:estimate_Holder_vh}
\end{equation}
\begin{equation}
 \forall\ 0<t\leq T:\qquad
 \int\limits_{\R}\left|\partial_{xx}^2 v_h(\,.\,,t)\right| +\int\limits_{\R}\left|\partial_tv_h(\,.\,,t)\right|\leq C\left(1+\frac{1}{t}+T\right)
 \label{eq:estimate_vxx_vt_measures}
\end{equation}
as measures in $\R$
, and
\begin{equation}
\left|\frac{d\zeta_{h,lr}}{dt}\right|\leq \gamma_0
 \mbox{ and }
 \operatorname{supp}v_h(\,.\,,t)\subseteq [\zeta_l(0)-\Delta x-\gamma_0t,\zeta_r(0)+\gamma_0t+\Delta x]
 \mbox{ for a.e. }t\in [0,T]
 \label{eq:estimate_interface_vh}
\end{equation}
(Lemma~\ref{lem:Linfty_Lipschitz_estimate}, Proposition~\ref{prop:Holder}, Lemma ~\ref{lem:Aronson_Benilan_estimate}, and Lemma~\ref{lem:discrete_speed_interfaces}). The extra $\Delta x$ is needed in \eqref{eq:estimate_interface_vh} because $\zeta^n$ needs not be an integer meshpoint, while $v_h$ is only interpolated from the $(x_k,t^n)$ nodes. It is well known \cite{DK86} that the Cauchy problem \eqref{eq:GPME_v} has a unique solution. As in \cite[Theorem 3.3]{DBH84} the main convergence result then reads:
\begin{theo}
Let $v$ be the unique solution to \eqref{eq:GPME_v} with initial datum $v^0$, and $\zeta_{l,r}$ the corresponding interfaces with $\operatorname{supp}v(\,.\,,t)=[\zeta_l(t),\zeta_r(t)]$. Then
\begin{align}
v_h\to v		\qquad 	&		\mbox{uniformly in }\overline{Q_T},\label{eq:CV_vh_v_one_patch}\\
\partial_x v_h \to \partial_x v	\qquad 	& \mbox{in }L^p(Q_T)\mbox{ for all }p\in[1,\infty),\label{eq:CV_vhx_vx_Lp_one_patch}\\
\zeta_{h,lr}\to \zeta_{lr}\qquad 	&\mbox{uniformly in }[0,T]\label{eq:CV_zetah_zeta_one_patch}
\end{align}
when $h=(\Delta x,\Delta t)\to 0$.
\label{theo:CV_solution_interfaces_single_patch}
\end{theo}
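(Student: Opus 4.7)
The plan is a compactness--uniqueness--consistency argument: extract subsequential limits $\tilde v$ of $v_h$ and $\tilde\zeta_{lr}$ of $\zeta_{h,lr}$, show that $\tilde v$ is a weak solution of \eqref{eq:GPME_v} with initial datum $v^0$, invoke uniqueness \cite{DK86} to conclude $\tilde v = v$, and finally identify $\tilde\zeta_{lr}$ with $\zeta_{lr}$. For compactness, estimates \eqref{eq:estimate_Linfty_Lipschitz_vh} and \eqref{eq:estimate_Holder_vh} show that $\{v_h\}$ is uniformly bounded, $\gamma_0$-Lipschitz in $x$, and $\frac{1}{2}$-H\"older in $t$, while \eqref{eq:estimate_interface_vh} confines all supports to a common compact set; Arzel\`a--Ascoli then produces a subsequence $v_h \to \tilde v$ uniformly on $\overline{Q_T}$. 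The initial condition $\tilde v(\cdot,0) = v^0$ is inherited from $v_h(x_k,0)=v^0(x_k)$ and the $\frac{1}{2}$-H\"older continuity in time. A second Arzel\`a--Ascoli extraction applied to the $\gamma_0$-Lipschitz curves $\zeta_{h,lr}$ (with common initial values by construction) gives Lipschitz limits $\tilde\zeta_{lr}$ with uniform convergence on $[0,T]$.

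For \eqref{eq:CV_vhx_vx_Lp_one_patch}, estimate \eqref{eq:estimate_vxx_vt_measures} shows that $\partial_x v_h(\cdot,t)$ has uniformly bounded total variation in $x$ for a.e. $t\in(0,T]$. Combined with the uniform convergence of $v_h$, Helly's compactness theorem yields pointwise a.e. convergence $\partial_x v_h \to \partial_x\tilde v$, and the uniform bound $|\partial_x v_h|\leq \gamma_0$ together with dominated convergence upgrades this to strong $L^p(Q_T)$ convergence for every $p<\infty$. To verify that $\tilde v$ satisfies Definition~\ref{defi:weak_sols_v}, I would multiply \eqref{eq:scheme} by $\varphi(x_k,t^n)\,\Delta x\,\Delta t$ for $\varphi\in\mathcal{C}^\infty_c(\R\times[0,T])$, perform discrete summation by parts over the numerical support $k\in[K_l(n),K_r(n)]$, and estimate the $\mathcal{O}(\Delta x)$-thin boundary-layer contributions using Lemma~\ref{lem:dv/dt_interface} together with the explicit interpolation \eqref{eq:def_linear_interpolation}. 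In the limit $h\to 0$, uniform convergence handles the terms depending continuously on $v_h$ and $\sigma(v_h)$, strong $L^2$ convergence handles the $\sigma(v)\partial_x v\,\partial_x\varphi$ term, and dominated convergence handles the $(1-\sigma'(v))|\partial_x v|^2\varphi$ term. Uniqueness of weak solutions \cite{DK86} then forces $\tilde v=v$, and since every subsequence produces the same limit, the full nets $v_h\to v$ and $\partial_x v_h\to\partial_x v$ converge as claimed.

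The main obstacle is identifying the interfaces. One inclusion $\operatorname{supp}v(\cdot,t)\subseteq[\tilde\zeta_l(t),\tilde\zeta_r(t)]$, equivalently $\zeta_l\geq\tilde\zeta_l$ and $\zeta_r\leq\tilde\zeta_r$, follows immediately from uniform convergence and the fact that $v_h\equiv 0$ outside $[\zeta_{h,l},\zeta_{h,r}]$. The reverse inequalities amount to ruling out that the numerical interface outruns the true one. I would argue by contradiction, paralleling \cite[Theorem 3.3]{DBH84}: if $\tilde\zeta_r(t_0)>\zeta_r(t_0)$ at some $t_0>0$ (the two agree at $t=0$ by initialization, so there is a last time $t_1\in[0,t_0)$ at which they agree), then on the slab $S=\{(x,t):\zeta_r(t)<x<\tilde\zeta_r(t),\ t\in[t_1,t_0]\}$ between them one has $v\equiv 0$, hence $\partial_x v\equiv 0$. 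Strong $L^p$ convergence $\partial_x v_h\to 0$ on $S$, combined with the explicit boundary-layer slope $\pm v^n_{K_r(n)}/s^n_r$ from \eqref{eq:def_linear_interpolation}, forces the discrete interface velocity \eqref{eq:propagation_interfaces} to vanish in the limit on $[t_1,t_0]$. Hence $\tilde\zeta_r$ is constant on $[t_1,t_0]$, so $\tilde\zeta_r(t_0)=\tilde\zeta_r(t_1)=\zeta_r(t_1)\leq \zeta_r(t_0)$ by monotonicity of $\zeta_r$, contradicting the strict inequality at $t_0$. Uniform convergence of the full family $\zeta_{h,lr}\to\zeta_{lr}$ then follows by the subsequence principle. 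This interface identification step is where the consistency of the discrete Darcy law \eqref{eq:propagation_interfaces} with the continuum free-boundary dynamics is truly tested; all preceding steps are fairly standard compactness arguments built on the a priori estimates of Section~\ref{subsection:estimates_one_patch}.
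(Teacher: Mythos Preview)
Your compactness and consistency arguments for \eqref{eq:CV_vh_v_one_patch} and \eqref{eq:CV_vhx_vx_Lp_one_patch} are essentially the paper's, and they are fine.

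The interface identification, however, has a real gap. You claim that strong $L^p$ convergence $\partial_x v_h \to 0$ on the slab $S$ forces the discrete interface velocity $v^n_{K_r(n)}/s^n_r$ to vanish in the limit. This does not follow. The boundary layer in which $\partial_x v_h$ equals $-v^n_{K_r(n)}/s^n_r$ has thickness $\mathcal{O}(\Delta x)$, so its contribution to any $L^p$ norm over $S$ is $\mathcal{O}(\Delta x)$ regardless of the slope there; $L^p$ convergence gives no pointwise control on sets of vanishing measure. Likewise, uniform convergence $v_h\to v=0$ on $S$ only yields $v^n_{K_r(n)}\to 0$, but since $s^n_r\in[\Delta x,2\Delta x]$ the ratio $v^n_{K_r(n)}/s^n_r$ need not tend to zero. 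Nothing in your argument rules out, say, $v^n_{K_r(n)}\sim c\,\Delta x$ with $c>0$, which would keep the interface moving at speed $\sim c$ while all your hypotheses on $S$ are satisfied.

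The paper closes this gap through Lemma~\ref{lem:linear_growth_interface}, which is where the discrete Aronson--B\'enilan estimate (Lemma~\ref{lem:Aronson_Benilan_estimate}) is genuinely used in the convergence proof. The argument is the contrapositive of yours: if the limit curve $\zeta^*=\lim\zeta_{h,r}$ has $d\zeta^*/dt(t_0)>0$, then $v(\zeta^*(t_0)-\delta,t_0)\geq c\delta$ for small $\delta>0$. This linear growth is obtained by summing by parts at the discrete level the identity $d\zeta/dt=-\partial_x v$ together with the semiconvexity $\partial_{xx}^2 v_h\geq \underline z(t)$, yielding an integrated inequality of the form
\[
\int_{t_0}^{t_0+\eta} v(\zeta^*(s)-\delta,s)\,ds \;\geq\; \delta\bigl(\zeta^*(t_0+\eta)-\zeta^*(t_0)\bigr)+\delta^2\eta\,\underline z(t_0),
\]
and then dividing by $\eta\to 0$. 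The contradiction is immediate: if $\zeta^*(t_0)>\zeta(t_0)$ at a point where $\zeta^*$ is moving, the linear growth produces $x_0=\zeta^*(t_0)-\delta>\zeta(t_0)$ with $v(x_0,t_0)>0$, contradicting the definition of $\zeta$. Finding such a $t_0$ uses monotonicity and $\zeta^*(0)=\zeta(0)$ much as you outline, but the decisive analytical input---a quantitative link between interface motion and pressure growth---comes from the Aronson--B\'enilan bound, not from $L^p$ convergence of $\partial_x v_h$.
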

The rest of this section is devoted to the proof of Theorem~\ref{theo:CV_solution_interfaces_single_patch}, which closely follows \cite{DBH84}.\\

For \eqref{eq:CV_vh_v_one_patch} we show that there is at least one subsequence $v_{h'}$ converging to some limit $v^*$, and that for any such converging subsequence the limit $v^*$ is a solution to the Cauchy problem. By uniqueness $v^*=v$ and standard separation arguments this implies that the whole sequence $v_h\to v$ as in our statement.

By \eqref{eq:estimate_Linfty_Lipschitz_vh}-\eqref{eq:estimate_Holder_vh} with the upper bound \eqref{eq:estimate_interface_vh} for the supports, we can extract a subsequence $\{h'\}\subseteq\{h\}$ such that $v_{h'}\to v^*$ uniformly in $\overline{Q}_T$ for some limit $v^*\in\mathcal{C}(\overline{Q}_T)$. For any fixed $t>0$ we see by \eqref{eq:estimate_vxx_vt_measures} that $\partial_{x}v_h(\,.\,,t)$ is bounded in $BV(\R)$ (bounded variation) uniformly in $h'$. By standard compactness \cite{AFP00} in BV spaces there is a further subsequence $\partial_x v_{h''}(\,.\,,t)\to w^*$ in $L^1(\R)$. By continuity we get $w^*(\,.\,)=\partial_x v^*(\,.\,,t)$, so by uniqueness and separation we conclude that $\partial_xv_{h'}(\,.\,,t)\to \partial_xv^*(\,.\,,t)$ for all $t>0$. An easy application of Lebesgue's dominated convergence theorem with uniform bounds $|\partial_x v_h|\leq \gamma_0$ gives strong $L^{p}(Q_T)$ convergence for all $p\in[1,\infty)$ as in our statement.

We check now that the limit $v^*$ is indeed a solution to the Cauchy problem in the sense of Definition~\ref{defi:weak_sols_v}. Since $v_h^0(x)\to v^0(x)$ uniformly in $\R$ and $v^*$ is continuous up to $t=0$ the initial trace will be taken in the strong sense, and it is enough to check that
\begin{equation}
 \int\limits_{\R}v^*(x,\,.\,)\varphi(x,\,.\,)\Big|_{t_0}^{t_1}\mathrm{d}x
 + \int\limits_{t_0}^{t_1}\int\limits_{\R}\left\{-v^* \partial_t\varphi	+ \sigma(v^*)\partial_x v^*\partial_x\varphi+\Big(1-\sigma'(v^*)\Big)|\partial_x v^*|^2\varphi\right\}\mathrm{d}x\,\mathrm{d}t=0.
 \label{eq:weak_formulation_v_IBP}
 \end{equation}
for all $0<t_0\leq t_1\leq  T$ and test functions $\varphi\in\mathcal{C}^{\infty}_c(\overline{Q}_T)$. This weak formulation formally follows from $\partial_tv=\sigma(v)\partial_{xx}^2 v+|\partial_x v|^2$ after multiplying by $\varphi$ and integration by parts. Let now $\varphi_k^n:=\varphi(x_k,t^n)$, set $N_0:=\lfloor t_0/\Delta t\rfloor$ and $N_1:=\lfloor t_1/\Delta t\rfloor$, and consider the approximate Riemann sum
$$
S:=\sum\limits_{n=N_0}^{N_1-1}\left\{
\sum\limits_k\left[
\frac{v_k^{n+1}-v_k^n}{\Delta t}-\big(\sigma(v_k^n)+\eps\big)\frac{Av_k^n}{\Delta x^2}-\left|\frac{v_{k+1}^n-v_{k-1}^n}{2\Delta x}\right|^2
\right]\varphi_k^n\Delta x
\right\}\Delta t.
$$
By construction of our scheme the summand in $S$ is identically zero for $x_k\notin[\zeta_l^n,\zeta_r^n]$ and $x_k\in [x_{K_l(n)},x_{K_r(n)}]$. In the remaining boundary layers, which have thickness at most $s^n=|\zeta^n-x_{K(n)}|\leq 2\Delta x$ and where $v_{k}^n$ is linear, we have $|(v_k^{n+1}-v_k^n)/\Delta t|=\mathcal{O}(1)$ by Lemma~\ref{lem:dv/dt_interface} and $(\sigma(v_k^n)+\eps)\frac{Av_k^n}{\Delta x^2}=\mathcal{O}(\Delta x)\frac{w_{k+1}^n-w_k^n}{\Delta x}=\mathcal{O}(1)$. Here we used in particular that the artificial viscosity $\eps=\mathcal{O}(\Delta x)$. Thus we see that $S\to 0$ when $h'\to 0$. Summing by parts in $S$ one can get $S=S'\to 0$, where $S'$ is the discrete $\Delta x\Delta t$ Riemann sum corresponding to \eqref{eq:weak_formulation_v_IBP}. Using then the definition of the interpolation $v_{h'}$ in terms 
of $v_k^n$,
 the strong convergence $v_{h'}\to v^*$, the 
Lipschitz and H\"older regularity of $v_{h'}$ and the test function $\varphi$, it is easy to express $S'$ as the sum of $\mathrm{dx}\mathrm{dt}$ integrals over all triangles $L_k^n,U_k^n$, plus a remainder $o(1)$, and then send $h'\to 0$ in order to retrieve the weak formulation \eqref{eq:weak_formulation_v_IBP} for $v^*$ (note that $\sigma\in\mathcal{C}^1([0,\infty))$ and therefore $\sigma'(v_h)\to \sigma'(v)$ uniformly). We refer to \cite[pp. 480]{DBH84} for the details.\\

Turning now to the uniform convergence of the interfaces, we only argue for the right one and write $\zeta^n=\zeta^n_r,\zeta_h=\zeta_{h,r}$ and $K(n)=K_r(n)$ for simplicity (the proof for the left interface is exactly similar). From \eqref{eq:estimate_interface_vh} we see that $\zeta_{h'}$ is bounded in $W^{1,\infty}(0,T)$, so up to extraction of a further sequence if needed we may assume that $\zeta_{h'}\to\zeta^*$ uniformly in $[0,T]$ for some $\zeta^*$. This limit $\zeta^*$ is moreover monotone nondecreasing in $t$ with $\zeta^*(0)=\zeta(0)$, as the uniform limit of the nondecreasing functions $\zeta_h$ with $\zeta_h(0)=\zeta(0)$. We shall prove that the limit agrees with the true interface $\zeta^*=\zeta$, and the same separation argument as before will then show that the whole sequence actually converges.

Following again \cite{DBH84} we first need a technical result ensuring that, at a point $(\zeta^*(t_0),t_0)$ where the limit $\zeta^*$ is moving with positive speed, then $v^*(\,.\,,t_0)$ grows at least linearly in an interior neighborhood $[\zeta^*(t_0)-\delta,\zeta^*(t_0)]$:
\begin{lem}
Let $v^*,\zeta^*=\lim v_{h'},\zeta_{h'}$ as above and $\underline{z}(t)$ as in Lemma~\ref{lem:def_ODE_z}. Then
\begin{enumerate}
 \item[(i)]
 For any $0<t_0<t_0+\eta\leq T$ and $\delta>0$ there holds
 \begin{equation}
 \int_{t_0}^{t_0+\eta}v^*(\zeta^*(s)-\delta,s)\,\mathrm{d}s
 \geq
 \delta\big(\zeta^*(t_0+\eta)-\zeta^*(t_0)\big)-\delta^2\eta\underline{z}(t_0)
 \label{eq:integrated_linear_growth_interface}
 \end{equation}
 \item[(ii)]
 If $0<t_0<T$ is such that $d\zeta^*/dt(t_0)$ exists and is positive, then there is $\delta_0>0$ and $c>0$ such that
 \begin{equation}
 v^*(\zeta^*(t_0)-\delta,t_0)\geq c\delta 
 \label{eq:linear_growth_interface}
 \end{equation}
for all $\delta\in[0,\delta_0] $.
\end{enumerate}

\label{lem:linear_growth_interface} 
\end{lem}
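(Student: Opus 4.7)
The plan is to prove (i) at the discrete level and pass to the limit, then to deduce (ii) from (i) by differentiation at $t_0$. The core idea behind (i) is that Lemma~\ref{lem:Aronson_Benilan_estimate} provides the discrete semi-convexity $Z_k^n\ge\underline{z}(t^n)$ for $v_h(\cdot,t^n)$, while Remark~\ref{rmk:propagation_law_interpolation} identifies the inward slope of $v_h(\cdot,t^n)$ at the interface with the discrete interface velocity $(\zeta_{h,r}^{n+1}-\zeta_{h,r}^n)/\Delta t=-w_{K_r(n)+1}^n$. Combined at each time step and summed in $n$, these two ingredients produce a telescoping identity whose limit $h'\to 0$ yields \eqref{eq:integrated_linear_growth_interface}. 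As in the proof of Lemma~\ref{lem:Aronson_Benilan_estimate} we only treat the right interface, drop the $r$ subscript, and write $K=K_r(n)$.

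Concretely, fix $n$. The piecewise linear function $v_h(\cdot,t^n)$ vanishes at $\zeta_h^n$, has inward slope $-w_{K+1}^n$ just inside the interface, and its slope jumps by exactly $Z_k^n\,\Delta x\ge\underline{z}(t^n)\,\Delta x$ at each interior node as one moves inward. Integrating this slope inequality twice on $[\zeta_h^n-\delta,\zeta_h^n]$ yields
\[
v_h(\zeta_h^n-\delta,t^n)\ \ge\ \delta\,\frac{\zeta_h^{n+1}-\zeta_h^n}{\Delta t}\ +\ \tfrac12\,\underline{z}(t^n)\,\delta^2\ +\ O(\Delta x)
\]
uniformly in bounded $\delta$. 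Multiplying by $\Delta t$ and summing over $n$ from $N_0=\lfloor t_0/\Delta t\rfloor$ up to $N_1-1=\lfloor(t_0+\eta)/\Delta t\rfloor-1$, the linear term telescopes into $\delta(\zeta_h^{N_1}-\zeta_h^{N_0})$, while the quadratic term becomes a Riemann sum for $\tfrac12\delta^2\int_{t_0}^{t_0+\eta}\underline{z}(s)\,\mathrm{d}s$. Passing to the limit $h'\to 0$ via Theorem~\ref{theo:CV_solution_interfaces_single_patch} and the continuity of $v^*$, and using the monotonicity $\underline{z}(s)\ge\underline{z}(t_0)$ for $s\ge t_0$ to bound the remaining integral from below by $\eta\,\underline{z}(t_0)$, recovers \eqref{eq:integrated_linear_growth_interface}.

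For (ii), the map $s\mapsto v^*(\zeta^*(s)-\delta,s)$ is continuous on $[0,T]$ by the uniform continuity of $v^*$ on $\overline{Q_T}$ together with the continuity of $\zeta^*$, so at any $t_0$ where $\dot\zeta^*(t_0)$ exists and is positive we may divide \eqref{eq:integrated_linear_growth_interface} by $\eta$ and let $\eta\searrow 0$. By the Lebesgue differentiation theorem this yields a pointwise bound of the form $v^*(\zeta^*(t_0)-\delta,t_0)\ge\dot\zeta^*(t_0)\,\delta+O(\delta^2)$ with the $O$-constant depending only on the finite value $\underline{z}(t_0)$. The quadratic correction is absorbed into half of the linear term for $\delta\le\delta_0$ small enough, and $c:=\tfrac12\dot\zeta^*(t_0)>0$ delivers \eqref{eq:linear_growth_interface}. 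The main technical difficulty sits in the discrete Taylor identity underlying (i): the boundary layer, where $v_h$ is built by linear interpolation and the finite-difference scheme (hence the AB estimate at the outermost node) is not directly applied, and the fact that $\zeta_h^n$ is not a meshpoint, must be handled carefully so that the $O(\Delta x)$ error is genuinely uniform in $\delta$ and disappears in the limit.
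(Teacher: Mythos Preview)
Your approach is essentially the same as the paper's: both establish (i) at the discrete level by combining the Aronson--B\'enilan lower bound $Z_k^n\ge\underline{z}(t^n)$ with the discrete interface propagation law $(\zeta^{n+1}-\zeta^n)/\Delta t=-w_{K+1}^n$, sum over time steps so that the linear term telescopes, and pass to the limit along $h'$; then deduce (ii) from (i) by dividing through by $\eta\to 0$. The paper evaluates at the meshpoint $x_{K(n)-p}$ with $p=\lfloor\delta/\Delta x\rfloor$ rather than at the non-mesh point $\zeta_h^n-\delta$, and drops the factor $\tfrac12$ in the quadratic term, but these differences are cosmetic.

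One genuine caution: your appeal to Theorem~\ref{theo:CV_solution_interfaces_single_patch} to justify passing to the limit is circular, since this very lemma is what the proof of that theorem uses to identify $\zeta^*=\zeta$. What you actually need---and what the lemma's hypothesis ``$v^*,\zeta^*=\lim v_{h'},\zeta_{h'}$ as above'' already gives you---is only the uniform convergence $v_{h'}\to v^*$ and $\zeta_{h'}\to\zeta^*$ along the subsequence, obtained by compactness earlier in the argument. Replace the reference accordingly and the proof stands.
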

This is somehow the converse statement of a well known fact for the so-called \emph{waiting-time phenomenon}: if $(\zeta(t_0),t_0)$ is a free-boundary point and the pressure grows at least linearly in $x$ in an interior neighborhood$\{v>0\}\cap B_r(\zeta(t_0))\times \{t_0\}$ then the free-boundary starts to move immediately (see e.g. \cite[Theorem 15.19]{Va07} for a stronger statement and simple proof in dimension $d=1$ for the pure PME nonlinearity). This explanation is of course an educated guess, as we do not know at this stage that $\zeta^*=\lim\zeta_{h'}$ is really the interface. Also note in (ii) that  $\zeta^*\in W^{1,\infty}(0,T)$ is differentiable a.e., and that the statement fails if $d\zeta^*/dt(t_0)=0$.
\begin{proof}
We first give a formal proof, keeping in mind that at the discrete level we enforced $d\zeta/dt=-\partial_xv$ at the interface and that the AB estimate $\partial_{xx}v(x,t)\geq \underline{z}(t)$ holds. Taking $h'\to 0$ we thus expect $d\zeta^*/dt(t_0)=-\partial_xv^*(\zeta^*(t_0),t_0)$, so that $v^*$ should indeed grow at least linearly $\partial_xv(\zeta^*(t_0),t_0)<0$ whenever the interface is moving $d\zeta^*/dt(t_0)>0$. In fact (ii) rigorously follows from (i): for whenever $\zeta^*$ is differentiable at $t_0$ with $d\zeta^*/dt(t_0)>0$ then dividing \eqref{eq:integrated_linear_growth_interface} by $\eta\to 0$ and discarding the $\delta^2=o(\delta)$ term for small $\delta>0$ yields \eqref{eq:linear_growth_interface} with $c\approx d\zeta^*/dt(t_0)>0$. Let us therefore also give a formal proof of (i): all regularity issues left aside and assuming that $v^*(\zeta^*(t),t)=0$, $d\zeta^*/dt=-\partial_xv^*(\zeta^*(t),t)$ and $\partial_{xx}
^2v(x,t)\geq \underline{z}(t)$ as expected, we first integrate by parts and use the generalized Aronson-B\'enilan estimate to estimate
\begin{align*}
v^*(\zeta^*(s)-\delta,s)		& =
\underbrace{v^*(\zeta^*(s),s)}_{=0}-\int_{\zeta^*(s)-\delta}^{\zeta^*(s)}\partial_xv^*(x,s)\mathrm{d}x\\
  & =-\int_{\zeta^*(s)-\delta}^{\zeta^*(s)}\left(\partial_xv^*(\zeta^*(s),s)-\int_x^{\zeta^*(s)}\partial_{xx}^2v^*(y,s)\mathrm{d}y\right)\mathrm{d}x\\
  &	\geq\int_{\zeta^*(s)-\delta}^{\zeta^*(s)}\underbrace{-\partial_xv^*(\zeta^*(s),s)}_{=+d\zeta^*/dt(s)}\mathrm{d}x
  + \int_{\zeta^*(s)-\delta}^{\zeta^*(s)}\left(\int_x^{\zeta^*(s)}\underline{z}(s)\mathrm{d}y\right)\mathrm{d}x\\
    & \geq \delta\frac{d\zeta^*}{dt}(s)+\int_{\zeta^*(s)-\delta}^{\zeta^*(s)}\delta\underline{z}(s)\,\mathrm{d}x=\delta\frac{d\zeta^*}{dt}(s)+\delta^2\underline{z}(s).
\end{align*}
Recalling also that $\underline{z}(t)$ is monotone increasing and integrating from $t_0$ to $t_0+\eta$ we conclude that
\begin{align*}
 \int_{t_0}^{t_0+\eta}v^*(\zeta^*(s)-\delta,s)\,\mathrm{d}s	& \geq
 \int_{t_0}^{t_0+\eta}\left(\delta\frac{d\zeta^*}{dt}(s)+\delta^2\underline{z}(s)\right)\mathrm{ds}\\
  & \geq \int_{t_0}^{t_0+\eta}\left(\delta\frac{d\zeta^*}{dt}(s)+\delta^2\underline{z}(t_0)\right)\mathrm{ds}\\
  &= \delta\big(\zeta^*(t_0+\eta)-\zeta^*(t_0)\big)-\delta^2\eta\underline{z}(t_0)
\end{align*}
as desired.\\
Following \cite[Lemma 3.4]{DBH84} we now briefly sketch how to get (i) rigorously, from which (ii) will follow as already explained. For fixed $\delta,\eta,t_0>0$ let $p=\lfloor \delta/\Delta x\rfloor$, $q=\lfloor \eta/\Delta x\rfloor$, and $N=\lfloor t_0/\Delta t\rfloor$. Recalling that $\frac{\zeta^{n+1-\zeta^n}}{\Delta t}=-\frac{v_{K(n)+1}^n-v_{K(n)}^n}{\Delta x}$ and summing by parts instead of integrating by parts as above, an explicit computation gives the discrete equivalent of \eqref{eq:integrated_linear_growth_interface}
$$
\sum\limits_{n=N}^{N+q-1}v_{K(n)-p}\Delta t\geq p\Delta x\big(\zeta^{N+q}-\zeta^N)\big)-(p\Delta x)^2(q\Delta t)\underline{z}(t^N).
$$
Sending $h'\to 0$ with uniform convergence $v_{h'}\to v^*$, $\zeta_{h'}\to \zeta^*$ and $x_{K(n)}\to \zeta^*(t)$ for $n=\lfloor t/\Delta t\rfloor$ finally allows to retrieve \eqref{eq:integrated_linear_growth_interface} and the proof is complete. 
\end{proof}
Back to the proof of \eqref{eq:CV_zetah_zeta_one_patch}, we recall that we only need to establish $\lim \zeta_{h'}=\zeta^*=\zeta$. From \eqref{eq:estimate_interface_vh} we have $v_{h'}(x,t)=0$ for all $x\geq \zeta_{h'}(t)+\Delta x$. As a consequence $v^*(x,t)=\lim v_{h'}(x,t) =0$ for all $x\geq \zeta^*(t)$, which shows by definition of $\zeta(t)=\zeta_r(t)=\sup\{x:\,v(x,t)>0\}$ that $\zeta^*(t)\geq \zeta(t)$.
Assuming by contradiction that there is $t_1>0$ for which $\zeta^*(t_1)>\zeta(t_1)$, we claim that there is $t_0\in (0,t_1)$ such that $\zeta^*(t_0)>\zeta(t_0)$ and $d\zeta/dt(t_0)>0$.\par
For if not, then arguing backwards in time starting from $t_1$ it is easy to see that either $\zeta^*(t)=cst=\zeta^*(t_1)$ for all $t\in[0,t_1]$, or there is $t_2\in(0,t_1)$ such that $\zeta^*(t)=cst=\zeta^*(t_1)$ for all $t\in[t_2,t_1]$ with $\zeta^*(t_2)=\zeta(t_2)$.
The first case would contradict $\zeta^*(0)=\zeta(0)$ since $\zeta^*(t_1)>\zeta(t_1)\geq \zeta(0)$.
In the second case, $\zeta\leq \zeta^*$ and the monotonicity of $\zeta$ show that $\zeta^*(t)=\zeta(t)=cst=\zeta^*(t_1)$ for all $t\in[t_2,t_1]$, thus contradicting $\zeta^*(t_1)>\zeta(t_1)$.

For any such $t_0$ Lemma~\ref{lem:linear_growth_interface} gives then $v(\zeta^*(t_0)-\delta)\geq c\delta >0$ for small $\delta$'s, and in particular choosing $0<\delta< \zeta^*(t_0)-\zeta(t_0)$ small enough there is a point $x_0=\zeta^*(t_0)-\delta>\zeta(t_0)$ such that $v(x_0,t_0)\geq c\delta>0$. This finally contradicts $\zeta(t_0)=\sup\{x:\,v(x,t_0)>0\}$ and ends the proof of Theorem~\ref{theo:CV_solution_interfaces_single_patch}.


%
%
\section{The hole-filling problem}
\label{section:two_patches}
In this section we consider the so-called hole-filling problem. We choose two compactly supported ``patches'' $\hat{v}^0(x),\check{v}^0(x)$ such that: (i) both $\hat{v}^0,\check{v}^0$ are $\gamma_0$-Lipschitz, (ii) $0\leq \hat{v}^0(x),\check{v}^0(x)\leq M$, and (iii) $\operatorname{supp}\hat{v}^0$ is at positive distance from $\operatorname{supp}\check{v}^0$ with
$$
\hat{\zeta}_l(0)<\hat{\zeta}_r(0)<\check{\zeta}_l(0)<\check{\zeta}_r(0).
$$
Defining
$$
v^0:=\max\{\hat{v}^0,\check v^0\}
$$
this means that $\operatorname{supp}v^0=\operatorname{supp}\hat{v}^0\cup \operatorname{supp}\check{v}^0$ has an internal hole of width $d_0=\check{\zeta}_l(0)-\hat{\zeta}_r(0)>0$ between $\operatorname{supp}\hat{v}^0$ and $\operatorname{supp}\check{v}^0$. Let $v(x,t),\hat{v}(x,t),\check{v}(x,t)$ be the solution of the Cauchy problem with initial data respectively $v^0(x),\hat{v}^0(x),\check{v}^0(x)$. We are interested here in computing a numerical approximation to $v(x,t)$. By noncontraction of the supports we know that $\hat{\zeta}_r(t)$ is nondecreasing, $\check{\zeta}_l(t)$ is nonincreasing, and because the interfaces propagate with finite speed at most $\gamma_0$ (which also follows from Section~\ref{section:one_patch}) the first time when the supports touch
$$
T^*=\sup\Big\{t\geq 0:\quad \hat{\zeta}_r(t)<\check{\zeta}_l(t)\Big\}\leq \infty
$$
is positive (possibly infinite). By uniqueness this implies that
$$
v=\max\{\hat{v},\check{v}\} \qquad\mbox{in }[0,T^*),
$$
so for $t\in[0,T^*)$ the support of $v$ still has an internal hole of width $d(t)=\check{\zeta}_l(t)-\hat{\zeta}_r(t)>0$. A well-known property of GPME is that ``once an interface starts moving it never stops'', see e.g. \cite[Lemma 14.20]{Va07} in any dimension for the pure PME nonlinearity and \cite[Corollary 15.23]{Va07} for a simple proof in dimension one. Since the internal interfaces were at positive distance at time $0$ this implies that, if and when they meet in finite time $\hat{\zeta}_r(T^*)=x^*=\check{\zeta}_l(T^*)$, at least one of the internal interfaces has started moving (otherwise the two would not meet) and is therefore still moving with positive speed. As a consequence at least one of the patches $\hat{v},\check{v}$ becomes instantaneously positive at $x=x^*$ for $t>T^*$, the comparison principle then implies $v(x^*,t)\geq\max\{\hat{v}(x^*,t),\check{v}(x^*,t)\}>0$, and the hole eventually disappears at $t=T^*$. Once the hole has filled the internal interfaces disappear, $\operatorname{supp}
v(\,.\,,
t)$ becomes a connected interval $[\zeta_l(t),\zeta_r(t)]$ containing the whole $[\hat{\zeta}_l(T^*),\check{\zeta}_r(T^*)]$, and $v$ does not 
equal $\max\{\hat{v},\check{v}\}$ anymore.

In section~\ref{section:one_patch} we described how to compute the approximate solution and interfaces when the initial datum consists in a single patch, which is exactly our assumption for each of $\hat{v}^0,\hat{v}^0$ separately. Using the results in the previous section we can therefore construct an approximation to each of the corresponding solutions $\hat{v},\check{v}$ and track all the resulting interfaces. We explain below how this previous one-patch algorithm can be naturally extended to the above case of two initial patches, while tracking all the interfaces (internal and external), detecting the hole-filling with accuracy, and solving past this time.
\begin{rmk}
We discuss here the case of two patches only for the ease of exposition, but the argument is easily adapted to any arbitrary number of initial patches at positive distance one from each other.
\end{rmk}

%
%
Roughly speaking, the algorithm goes as follows: starting from $\hat{v}^0_k,\check{v}^0_k$,  construct two independent sets of approximate solutions and interfaces $(\hat{v}_k^n,\hat{\zeta}^n_{l,r})$ and $(\hat{v}_k^n,\hat{\zeta}^n_{l,r})$ applying the one-patch scheme from Section~\ref{section:one_patch} separately to each patch. As long as the internal interfaces do not meet keep solving, and define $v_k^n=\max\{\hat{v}_k^n,\check{v}_k^n\}$. If the interfaces meet at $t=t^N$ then stop tracking the internal interfaces $\hat{\zeta}_r, \check{\zeta}_l$, define the external interfaces $\zeta^N_l:=\hat{\zeta}_l^N,\zeta^N_r:=\check{\zeta}_r^N$, and resume the computation applying the one-patch scheme to $v^{n}_k$ starting from $v^N_k$ at time $t^N$. More precisely,
\begin{algo}[Nnumerical scheme for the hole-filling]
Initialize $\hat{v}^0_k:=\hat{v}^0(x_k)$, $\check{v}^0_k:=\check{v}^0(x_k)$, $v^0_k:=\max\{\hat{v}^0_k,\check{v}^0_k\}$, as well as $\hat{\zeta}_{l,r}^0:=\hat{\zeta}_{l,r}(0)$, $\check{\zeta}_{l,r}^0:=\check{\zeta}_{l,r}(0)$, and $\zeta^0_l:=\hat{\zeta}_l^0$, $\zeta^0_r:=\check{\zeta}^0_r$. For fixed $T>0$ and while $t^n\leq T$, do:
\begin{enumerate}
 \item
 \label{item:algo_no_hitting}
 Apply the one-patch algorithm from section~\ref{section:one_patch} separately to $\hat{v}^{n},\hat{\zeta}_{l,r}^{n}$ and $\check{v}^{n},\check{\zeta}_{l,r}^{n}$ in order to predict $\hat{v}^{(n+1)'},\hat{\zeta}_{l,r}^{(n+1)'}$ and $\check{v}^{(n+1)'},\check{\zeta}_{l,r}^{(n+1)'}$. If the predicted internal interfaces are at least $\Delta x$ away $\check{\zeta}_l^{(n+1)'}-\hat{\zeta}_r^{(n+1)'}>\Delta x$, update $(n+1)'\to(n+1)$, set $v^{n+1}_k:=\max\{\hat{v}^{n+1}_k,\check{v}^{n+1}_k\}$, and repeat step 1. Otherwise define the numerical filling time $\tilde{T}^*:=t^n$, the external interfaces $\zeta_l^n:=\hat{\zeta}_l^n$ and $\zeta^n_r:=\check{\zeta}_r^n$, and go to step 2.
 \item
 \label{item:algo_after_hitting}
 Apply the one-patch algorithm from section~\ref{section:one_patch} to $v^{n},\zeta^{n}_{l,r}$ in order to construct $v^{n+1},\zeta^{n+1}_{l,r}$, and repeat Step 2.
\end{enumerate}
\label{algo:two_patches}
\end{algo}

Note that because all the interfaces propagate with numerical speed at most $\gamma_0$ and the internal ones are at initial distance $d(0)>0$, Step~\ref{item:algo_no_hitting} will be applied at least for $t^n\leq d(0)/2\gamma_0$ hence $\tilde{T}^*\geq d(0)/2\gamma_0$ uniformly in the mesh parameters. In case the hitting does occur for some $\tilde{T}^*\leq T$ then the numerical internal interfaces are not defined for later times. 

%
\subsection{A priori estimates}
We show here that all the previous estimates discrete are preserved across and after the filling time, including the $L^{\infty}$, Lipschitz, and H\"older bounds as well as the generalized Aronson-B\'enilan estimate. In particular we will obtain that the pressure $v$ stays $\gamma_0$-Lipshitz across the filling time, which is well known to hold in dimension one only (formally because $w=\partial_x v$ satisfies a maximum principle as in the proof of Lemma~\ref{lem:Linfty_Lipschitz_estimate}). As in the previous section we impose the \eqref{eq:CFL} condition on the mesh parameters $\Delta x,\Delta t,\eps$.
\begin{prop}
Let $v_k^n$ be the (two-patches) discrete solution constructed with Algorithm~\ref{algo:two_patches}, and $\underline{z}(t)<0$ as in Lemma~\ref{lem:def_ODE_z}. Then 
$$
0\leq v_k^n\leq M,
\quad
\left|\frac{v_{k}^n-v_{k-1}^n}{\Delta x}\right|\leq \gamma_0,
\quad
\left|v_{k}^n-v_k^m\right|\leq C|t^n-t^m|^{1/2},
\quad
\frac{Av_k^n}{\Delta x^2}\geq \underline{z}(t^n)
$$
hold for all $k$ and $t^n,t^m\in[0,T]$.
\label{prop:a_priori_estimates_two_patches}
\end{prop}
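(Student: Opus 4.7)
My plan is to separate the analysis into the two phases of Algorithm~\ref{algo:two_patches}: Phase~A for $t^n<\tilde{T}^*$, during which $\hat{v}_k^n$ and $\check{v}_k^n$ are evolved independently by the one-patch scheme and $v_k^n=\max\{\hat{v}_k^n,\check{v}_k^n\}$, and Phase~B for $t^n\geq\tilde{T}^*$, during which the one-patch algorithm of Section~\ref{section:one_patch} is applied to the merged profile $v_k^n$. In Phase~A, Lemma~\ref{lem:Linfty_Lipschitz_estimate}, Lemma~\ref{lem:Aronson_Benilan_estimate} and Proposition~\ref{prop:Holder} applied separately to each patch already yield the four bounds for $\hat{v}_k^n$ and $\check{v}_k^n$ individually; the only work in this phase consists of transferring them through the max.

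The key structural observation for Phase~A is that the admissibility test in Step~\ref{item:algo_no_hitting} forces $\check{\zeta}_l^n-\hat{\zeta}_r^n>\Delta x$ for all $t^n<\tilde{T}^*$, so there is at least one meshpoint $x_{k^\ast}$ with $\hat{v}_{k^\ast}^n=\check{v}_{k^\ast}^n=0$, and $\mathrm{supp}\,\hat{v}^n$, $\mathrm{supp}\,\check{v}^n$ are disjoint on the grid. Consequently $v_k^n$ agrees with $\hat{v}_k^n$ on $\{x_k\leq x_{k^\ast-1}\}$, with $\check{v}_k^n$ on $\{x_k\geq x_{k^\ast+1}\}$, and vanishes at $x_{k^\ast}$. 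From this the $L^\infty$ bound is immediate, the H\"older bound follows from $|\max(a,b)-\max(a',b')|\leq\max(|a-a'|,|b-b'|)$, and the Lipschitz bound $|w_k^n|\leq\gamma_0$ splits into three cases for consecutive nodes $(k-1,k)$: both lie in the same patch's region (direct from the one-patch bound on that patch) or one of them is the separating zero meshpoint (in which case $|v_k^n-v_{k-1}^n|/\Delta x$ equals $|\hat v|/\Delta x$ or $|\check v|/\Delta x$ at the very first/last node of the patch, also controlled by $\gamma_0$ via the one-patch Lipschitz bound applied to $\hat v$ or $\check v$). For the Aronson--B\'enilan estimate, $Av_k^n=A\hat{v}_k^n$ or $A\check{v}_k^n$ whenever $x_{k-1},x_k,x_{k+1}$ lie within a single patch's extended support (so the one-patch AB bound applies), and $Av_k^n\geq 0\geq\underline{z}(t^n)$ at nodes $x_k$ such that one of $\hat{v}_{k\pm 1}^n,\check{v}_{k\pm 1}^n$ vanishes there, again using that $\hat v,\check v$ are nonnegative.

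For Phase~B, the profile $v_k^N$ at $t^N=\tilde{T}^*$ produced by Phase~A satisfies $0\leq v_k^N\leq M$, $|(v_k^N-v_{k-1}^N)/\Delta x|\leq\gamma_0$, and the single-patch AB bound $Av_k^N/\Delta x^2\geq\underline{z}(t^N)$; in particular it is a legitimate initial datum for the one-patch scheme of Section~\ref{section:one_patch}, with a single connected numerical support $[\zeta_l^N,\zeta_r^N]=[\hat\zeta_l^N,\check\zeta_r^N]$. The proofs of Lemma~\ref{lem:Linfty_Lipschitz_estimate}, Lemma~\ref{lem:Aronson_Benilan_estimate}, and Proposition~\ref{prop:Holder} are inductive in $n$, and nothing prevents restarting the induction at $n=N$ with base-case AB constant $\underline{z}(t^N)$ instead of $\underline{z}(0)=-\infty$: the inductive step of Lemma~\ref{lem:Aronson_Benilan_estimate} produces $Z_k^{n+1}\geq\underline{z}(t^n)+\Delta t F(\underline{z}(t^n))\geq\underline{z}(t^{n+1})$ from $Z_k^n\geq\underline{z}(t^n)$, and monotonicity of $\underline{z}$ ensures compatibility with the Phase~A bound at the transition. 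The H\"older estimate is obtained on cylinders $Q$ as in Proposition~\ref{prop:Holder}, and the only modification is that cylinders straddling $t=\tilde{T}^*$ must be split at $t^N$ and the two resulting estimates added via the triangle inequality, at the cost of doubling the constant.

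The main obstacle is the transfer of the generalized Aronson--B\'enilan estimate through the max operation in Phase~A and across the merging time. In Phase~A one must carefully verify that at nodes adjacent to the inter-patch gap the second-difference operator $A$ does not mix contributions from the two patches in a way that destroys the lower bound; this is precisely what the separation condition $\check\zeta_l^n-\hat\zeta_r^n>\Delta x$ is designed to guarantee, since it ensures that at least one of the three stencil values $v_{k-1}^n,v_k^n,v_{k+1}^n$ involved in $Av_k^n$ vanishes whenever $x_k$ lies within $\Delta x$ of the gap. Once this is secured, Phase~B runs through verbatim on the reduction to the already-proved one-patch estimates.
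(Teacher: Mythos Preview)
Your proposal is correct and follows essentially the same two-phase decomposition as the paper: before the numerical merging time $\tilde T^*$ one transfers the one-patch estimates for $\hat v,\check v$ through the $\max$, using the separation condition $\check\zeta_l^n-\hat\zeta_r^n>\Delta x$ to guarantee a zero meshpoint in the gap so that $Av_k^n\geq 0$ there; after $\tilde T^*$ one restarts the one-patch induction from the merged profile $v_k^N$, which already satisfies the required base-case bounds including $Av_k^N/\Delta x^2\geq\underline z(t^N)$.

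The one genuine methodological difference is in the H\"older estimate. The paper does not split cylinders at $t=\tilde T^*$; instead it re-enters the proof of Proposition~\ref{prop:Holder}, observes that the auxiliary comparison function $V_k^n$ and the case distinction (outside the support / inside a boundary layer / inside the support where \eqref{eq:scheme} applies) remain valid even when the cylinder $Q$ straddles the merging time, and thereby obtains the H\"older bound with the \emph{same} constant $C$ as in the one-patch case. Your modular approach---using $|\max(a,b)-\max(a',b')|\leq\max(|a-a'|,|b-b'|)$ in Phase~A, the one-patch bound in Phase~B, and the triangle inequality across $t^N$---is cleaner and avoids revisiting the proof, at the price of doubling the constant. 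Both are valid; the paper's argument is sharper, yours is more black-box.
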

\begin{proof}
If no hole filling occurs our statement immediately follows from the results in Section~\ref{section:one_patch}, as $v_k^n$ coincides with either $\hat{v}_k^n$ or $\check{v}_k^n$, depending on which side of the internal hole one is looking at. Thus we may assume that internal interfaces meet at $t=t^N$.

For times $t^n\leq t^N$ the patches $0\leq \hat{v}_k^n,\check{v}_k^n\leq M$ are $\gamma_0$-Lipschitz (Lemma~\ref{lem:Linfty_Lipschitz_estimate}) so clearly $v^n_k=\max\{\hat{v}_k^n,\check{v}_k^n\}$ satisfies the same bounds for all $t^n\leq t^N$, and in particular at $t=t^N$. By definition $v^n_k$ is then constructed for $t^n\geq t^N$ by applying the one-patch scheme to solve the discrete Cauchy problem starting from the initial data $v^N_k$ at time $t^N$. Since $v_k^{N}$ satisfies the desired bounds we conclude by Lemma~\ref{lem:Linfty_Lipschitz_estimate} that $v_k^n$ satisfies the same $L^{\infty}$ and $\gamma_0$-Lipschitz estimates for all $t^n\geq t^N$.

Regarding now the H\"older continuity in time, we check that the proof of Proposition~\ref{prop:Holder} still applies. In Step 1 ($V_k^n\leq 0$ in $Q$ by induction on $n\in [n_0,n_1]$) the initialization $n=n_0$ only requires $\gamma_0$-Lipschitz bounds, which is true here. For the induction step we distinguished three cases: (i) $x_k$ is outside of the support with $v_k^n=0$ , (ii) $x_k$ is within one of the boundary layers, and (iii) when $v^{n+1}_k$ is constructed applying the finite difference scheme \eqref{eq:scheme}. All three cases are easily checked here with two patches: (i) and (ii) are identical, and (iii) also works here since $v^{n+1}_k$ is in fact constructed applying the finite difference equation \eqref{eq:scheme} to either one of the two patches before the filling time and to the unique patch afterward. Step 2 is identical, since it relies only on structural considerations and the previous $L^{\infty}$ and Lipschitz bounds.

We finally turn to the AB estimate. By definition of the hitting time $t^N$ we have that $\check{\zeta}_l^n-\hat{\zeta}_r^n>\Delta x$ stay strictly $\Delta x$ away from each other for $t^n\leq t^N$, so that there is always at least one integer mesh point in the hole. Since $v_k^n\geq 0$ everywhere and $v_k^n=0$ in the hole it is easy to check that $Av_k^n\geq 0$ for all $x_k$ such that $\hat{\zeta}^n_r-\Delta x\leq x_k\leq \check{\zeta}_l^n+\Delta x$, hence the AB estimate is trivially satisfied there (recall that $\underline{z}(t)<0$). Now outside the hole $Av_k^n$ equals either $A\hat{v}_k^n$ or $A\check{v}_k^n$, hence the AB estimate holds for all $t^n\leq t^N$ and including at $t=t^N$. Now for $t^n\geq t^N$ the solution $v^{n}_k$ is constructed applying the one-patch algorithm with initial datum $v^N_k$ at time $t^N$, which satisfies 
the AB estimate. By Lemma~\ref{lem:Aronson_Benilan_estimate} we conclude that the estimate also holds for all $t^n\geq t^N$ and the proof is complete.
\end{proof}
%
\subsection{Convergence of the approximate solutions and interfaces}
For fixed $T>0$ we denote $Q_T=\R\times(0,T)$ and $h=(\Delta x,\Delta t)$ as before. As in section~\ref{subsection:CV_one_patch} we define $v_h$ to be continuous and piecewise linear in all triangle $L_k^n,U_k^n$ according to \eqref{eq:def_interpolation_vh_meshpoints}. The external $\zeta_{h,lr}$ and internal $\hat{\zeta}_{h,r},\check{\zeta}_{h,l}$ interfaces are defined to be piecewise linear as in \eqref{eq:def_interpolation_interface_meshpoints}. Note that $\zeta_{h,lr}$ are defined up to $t=T$, while $\hat{\zeta}_{h,r}\leq \check{\zeta}_{h,l}$ are only defined up to the (numerical) filling time
\begin{equation}
T^*_h:=\max\{t^n:\quad \check{\zeta}^n_l-\hat{\zeta}_r^n>\Delta x\}
\label{eq:def_Th}
\end{equation}
(see Algorithm~\ref{algo:two_patches}). If no filling is numerically detected before the end of the computation we simply do not define $T^*_h$. In any case $\hat{\zeta}_{h,r},\check{\zeta}_{h,l}$ are respectively monotone nondecreasing and nonincreasing as long as they exist.
\begin{theo}
For fixed $T>0$ the numerical solution $v_h$ converges uniformly in $\overline{Q}_T$ to the unique solution $v$ when $h\to 0$.
\label{theo:CV_solution_double_patch}
\end{theo}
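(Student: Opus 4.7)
The plan is to repeat the argument of Section \ref{subsection:CV_one_patch} almost verbatim: extract a subsequential limit by compactness from the uniform bounds of Proposition \ref{prop:a_priori_estimates_two_patches}, identify the limit with the unique weak solution via a Riemann-sum manipulation of the discrete scheme, and conclude by separation. Proposition \ref{prop:a_priori_estimates_two_patches} gives the interpolant $v_h$ the same uniform estimates \eqref{eq:estimate_Linfty_Lipschitz_vh}--\eqref{eq:estimate_interface_vh} as in the single-patch case (the support bound \eqref{eq:estimate_interface_vh} being used with the external interfaces $\zeta_{h,l}=\hat\zeta_{h,l},\zeta_{h,r}=\check\zeta_{h,r}$). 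By Arzel\`a-Ascoli a subsequence $v_{h'}\to v^*$ converges uniformly on $\overline{Q}_T$ to some $v^*\in\mathcal{C}(\overline{Q}_T)$, and the BV bound on $\partial_x v_{h'}(\cdot,t)$ coming from the Aronson-B\'enilan estimate together with Lebesgue dominated convergence yields $\partial_x v_{h'}\to\partial_x v^*$ in $L^p(Q_T)$ for every $p\in[1,\infty)$.

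Next I would verify that $v^*$ is a weak solution of \eqref{eq:GPME_v} with initial datum $v^0=\max\{\hat v^0,\check v^0\}$. For $\varphi\in\mathcal{C}_c^\infty(\overline{Q}_T)$, form the Riemann sum
\[ S := \sum_{n=N_0}^{N_1-1}\sum_k\left[\frac{v_k^{n+1}-v_k^n}{\Delta t}-\bigl(\sigma(v_k^n)+\eps\bigr)\frac{Av_k^n}{\Delta x^2}-\left|\frac{v_{k+1}^n-v_{k-1}^n}{2\Delta x}\right|^2\right]\varphi_k^n\,\Delta x\,\Delta t. \]
By construction of Algorithm \ref{algo:two_patches} the bracket vanishes identically whenever $x_k$ lies in the interior of the numerical support (of either individual patch when $t^n<T_h^*$, or of the merged patch when $t^n\geq T_h^*$), as well as whenever $v_{k-1}^n=v_k^n=v_{k+1}^n=0$ and the same holds at time $t^{n+1}$ (points well inside the hole or outside all supports). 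Non-zero contributions are therefore confined to the boundary-layer mesh points (four of them before $T_h^*$, two after) plus a constant number of hole-adjacent mesh points flanking each internal interface when $t^n<T_h^*$. At each such point the time-difference $|v_k^{n+1}-v_k^n|/\Delta t$ is $\mathcal{O}(1)$ by Lemma \ref{lem:dv/dt_interface} (applied to $\hat v$ and $\check v$ individually before filling, together with the Lipschitz inequality $|\max(a,b)-\max(c,d)|\leq\max(|a-c|,|b-d|)$, or directly to the merged $v$ afterwards), while the spatial terms are $\mathcal{O}(1)$ using $\eps=\mathcal{O}(\Delta x)$ and the uniform Lipschitz bound. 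Multiplying $\mathcal{O}(1)$ by $\Delta x\,\Delta t$ and summing over $\mathcal{O}(T/\Delta t)$ time steps gives $S=\mathcal{O}(T\Delta x)\to 0$. Summing by parts as in \cite[pp.~480]{DBH84} rewrites $S$ as a discrete Riemann sum $S'$ approximating the weak formulation \eqref{eq:weak_formulation_v_IBP}, and passing to the limit $h'\to 0$ using uniform convergence $v_{h'}\to v^*$, $L^p$-convergence of $\partial_xv_{h'}$, and continuity of $\sigma,\sigma'$ yields \eqref{eq:weak_formulation_v_IBP} for $v^*$. The initial trace $v^*(\cdot,0)=v^0$ follows from the uniform convergence $v_{h'}(\cdot,0)\to v^0$ and continuity of $v^*$ up to $t=0$ provided by the H\"older estimate \eqref{eq:estimate_Holder_vh}. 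Uniqueness of the Cauchy problem for \eqref{eq:GPME_v} forces $v^*=v$, and the usual separation argument then upgrades the subsequential convergence to $v_h\to v$ uniformly in $\overline{Q}_T$.

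The main technical obstacle is the careful accounting of the ``transition'' mesh points adjacent to the hole and at the filling instant $t^N=T_h^*$. Before filling the scheme produces $v_k^n=\max\{\hat v_k^n,\check v_k^n\}$ at hole-adjacent points, so Lemma \ref{lem:dv/dt_interface} has to be invoked on $\hat v$ and $\check v$ separately rather than on $v$; at the transition step $n=N$ the input $v_k^N$ still comes from the max-formula while $v_k^{N+1}$ is produced by the merged one-patch scheme, but this single update is just one more term in the same $\mathcal{O}(T\Delta x)$ remainder. Notably this route completely bypasses the need to track the numerical filling time $T_h^*$ or to identify its limit with the true $T^*$, or to prove convergence of the internal interfaces: uniqueness of the weak formulation does all the work at once.
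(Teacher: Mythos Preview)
Your proposal is correct and follows essentially the same approach as the paper. The paper's own proof is a terse four-line remark observing that the one-patch argument in Section~\ref{subsection:CV_one_patch} depended only on (i) the uniform discrete estimates, (ii) uniqueness of the Cauchy problem, (iii) consistency of \eqref{eq:scheme} in the numerical support, and (iv) $\mathcal{O}(1)$ control inside the boundary layers, all of which carry over by Proposition~\ref{prop:a_priori_estimates_two_patches}; you have simply written out in detail what the paper leaves implicit, in particular the careful bookkeeping of the $\mathcal{O}(1)$ contributions at hole-adjacent mesh points and at the transition step $n=N$, and your final remark that this route bypasses any need to control $T_h^*$ is precisely the reason the paper proves Theorem~\ref{theo:CV_solution_double_patch} before Theorem~\ref{theo:CV_hitting_time}.
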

\begin{proof}
Note that the proof of \eqref{eq:CV_vh_v_one_patch} for the case of one patch only in Theorem~\ref{theo:CV_solution_interfaces_single_patch} only relies on: (i) the discrete estimates on $v_h$ uniformly in $h$ allowing to get compactness both for $v_h$ and $\partial_x v_h$, (ii) uniqueness for the Cauchy problem, (iii) the consistence of the finite difference equation \eqref{eq:scheme} inside the support, and (iv) the fact that all quantities involved in \eqref{eq:scheme} are of order $\mathcal{O}(1)$ inside the numerical boundary layers, see section~\ref{subsection:CV_one_patch} for the details. By Proposition~\ref{prop:a_priori_estimates_two_patches} this remains true in the case of two patches, thus allowing to conclude as in the proof of Theorem~\ref{theo:CV_solution_interfaces_single_patch}.
\end{proof}
The uniform convergence of the interfaces is now more delicate, as we need to distinguish between cases depending on whether the hole fills or not before the computation time $T$ . Roughly speaking, as long as the interfaces make sense the convergence follows as in the case of one patch only. We prove in particular that, if and when the \emph{numerical} filling occurs at time $t=T^*_h$, then $T^*_h$ is indeed a good approximation to the theoretical filling time $T^*$:
\begin{theo}
Fix $T>0$ and let $T^*$ be the theoretical hole-filling time. Then
\begin{enumerate}
 \item[(a)]
 If $T^*<T$ then there is a small $\delta_0>0$ such that the numerical hitting occurs at times $T^*_h\leq T-\delta_0$ for all $h\leq h_0$, and there exists $\lim\limits_{h\to 0}T^*_h=T^*$. Moreover
 $$
 \|\zeta_{h,l}-\zeta_l\|_{L^{\infty}(0,T)}+\|\zeta_{h,r}-\zeta_r\|_{L^{\infty}(0,T)}\to 0
 $$
 and
 $$
 \|\hat\zeta_{h,r}-\hat{\zeta}_r\|_{L^{\infty}(0,T^*-\eta)}+\|\check{\zeta}_{h,l}-\check{\zeta}_l\|_{L^{\infty}(0,T^*-\eta)}\to 0
 $$
 for any small $\eta>0$ fixed.
 \item[(b)]
 If $T^*\geq T$ then for all $\eta>0$ there exists $h_0(\eta)$ such that for all $h\leq h_0$ either no numerical hitting occurs before $t=T$, or does so at $T^*_h\geq T-\eta$. In particular for small $\eta$ the internal interfaces $\hat{\zeta}_{h,r},\check{\zeta}_{h,l}$ are defined at least for $t\leq T-\eta$. Moreover
 $$
 \|\zeta_{h,l}-\zeta_l\|_{L^{\infty}(0,T)}+\|\zeta_{h,r}-\zeta_r\|_{L^{\infty}(0,T)}\to 0
 $$
 and
 $$
 \|\hat\zeta_{h,r}-\hat{\zeta}_r\|_{L^{\infty}(0,T-\eta)}+\|\check{\zeta}_{h,l}-\check{\zeta}_l\|_{L^{\infty}(0,T-\eta)}\to 0
 $$
 for any small $\eta>0$ fixed.
\end{enumerate} 
\label{theo:CV_hitting_time}
\end{theo}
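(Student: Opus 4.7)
My plan is to reduce Theorem~\ref{theo:CV_hitting_time} to the single-patch convergence result (Theorem~\ref{theo:CV_solution_interfaces_single_patch}) applied separately to auxiliary one-patch approximations of $\hat v$ and $\check v$, then to identify $T_h^*$ via a uniformly converging gap function, and finally to handle the external interfaces past the filling time by recycling the end-of-section argument from Section~\ref{subsection:CV_one_patch}.

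First, let $(\hat v_h, \hat\zeta_{h,l,r})$ and $(\check v_h, \check\zeta_{h,l,r})$ be the one-patch approximations produced independently from $\hat v^0, \check v^0$ by the Section~\ref{section:one_patch} algorithm; they are defined on the whole of $[0,T]$ and converge uniformly to $\hat v, \check v$ and their interfaces by Theorem~\ref{theo:CV_solution_interfaces_single_patch}. By construction of Step~\ref{item:algo_no_hitting} of Algorithm~\ref{algo:two_patches}, the two-patch interpolants coincide with these one-patch counterparts on $[0, T_h^*]$, so the uniform convergence transfers to the Algorithm~\ref{algo:two_patches} internal interfaces on their interval of definition.

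Next, I would introduce the continuous gap $d(t):=\check\zeta_l(t)-\hat\zeta_r(t)$ and the numerical analogue $d_h(t):=\check\zeta_{h,l}(t)-\hat\zeta_{h,r}(t)$ built from the one-patch interpolants, so that $d_h\to d$ uniformly on $[0,T]$ with both continuous and nonincreasing in $t$, and $d(T^*)=0$. The ``once moving, never stops'' fact recalled in the introduction of Section~\ref{section:two_patches} implies that at $T^*$ at least one of $\hat\zeta_r,\check\zeta_l$ is moving with positive speed and continues to do so, hence $d(t)<0$ strictly for $t>T^*$; combined with monotonicity and continuity this produces, for every $\eta>0$, a modulus $c(\eta)>0$ with $d(T^*+\eta)\leq -c(\eta)$. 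Since Algorithm~\ref{algo:two_patches} sets $T_h^*=\max\{t^n:\ d_h(t^n)>\Delta x\}$, in case (a) ($T^*<T$) uniform convergence yields, for small $\eta$ and $h$, that $d_h(T^*-\eta)>d(T^*-\eta)/2>\Delta x$ and $d_h(T^*+\eta)<-c(\eta)/2<\Delta x$, forcing $T_h^*\in[T^*-\eta-\Delta t,\,T^*+\eta]$ and hence $T_h^*\to T^*$ with $T_h^*\leq T-\delta_0$ for $h\leq h_0$ when $\delta_0:=(T-T^*)/2$. In case (b) ($T^*\geq T$), for any $\eta>0$ we have $d(T-\eta)>0$ by definition of $T^*$, so monotonicity and uniform convergence give $d_h(t)\geq d(T-\eta)/2>\Delta x$ on $[0,T-\eta]$ for $h$ small, whence either no numerical hitting occurs before $t=T$ or $T_h^*\geq T-\eta$.

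The uniform convergence of the internal interfaces on $[0,T^*-\eta]$ (resp.\ $[0,T-\eta]$) is then immediate from the first step and the characterization of $T_h^*$ above. For the external interfaces on $[0,T]$, Proposition~\ref{prop:a_priori_estimates_two_patches} supplies $\gamma_0$-Lipschitz bounds and the generalized Aronson--B\'enilan estimate uniformly in $h$, so up to a subsequence $\zeta_{h,l,r}\to\zeta^*_{l,r}$ uniformly; the inequality $\zeta^*_r\geq\zeta_r$ follows from $v_h=0$ past $\zeta_{h,r}+\Delta x$ combined with the uniform convergence $v_h\to v$ of Theorem~\ref{theo:CV_solution_double_patch}, while the reverse $\zeta^*_r\leq\zeta_r$ follows from Lemma~\ref{lem:linear_growth_interface}, whose proof carries over verbatim in the two-patch setting thanks to Proposition~\ref{prop:a_priori_estimates_two_patches}. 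Uniqueness of the Cauchy problem and the standard separation argument identify the full limit, and the left external interface is handled identically. The delicate point throughout is the identification of $T_h^*$: one must rigorously rule out a degenerate scenario where $d$ would stay flat at zero past $T^*$ (this is where the ``once moving, never stops'' property is essential), and then cleanly separate the $\eta\to 0$ and $h\to 0$ limits so that the $\Delta x$ threshold in the algorithm neither artificially delays nor anticipates the numerical hitting.
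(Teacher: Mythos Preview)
Your proposal is correct and relies on the same three ingredients as the paper: the single-patch convergence theorem applied independently to $\hat v,\check v$, the ``once moving, never stops'' property to pin down the unique zero of the gap, and the Section~\ref{subsection:CV_one_patch} argument (compactness plus Lemma~\ref{lem:linear_growth_interface}, which indeed carries over via Proposition~\ref{prop:a_priori_estimates_two_patches}) for the external interfaces.

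The organization, however, is genuinely more direct than the paper's. The paper proves $T_h^*\le T-\delta_0$ in case~(a) by contradiction, extracting a subsequence with no hitting (or late hitting), producing a point $(x_0,T)$ in the initial hole where $v(x_0,T)=0$ via the uniform convergence $v_h\to v$ of Theorem~\ref{theo:CV_solution_double_patch}, and then contradicting positivity of $v$ there. It then proves $T_h^*\to T^*$ by a second subsequence argument, passing to the limit in \eqref{eq:zetal=zetar_hole_filling_time} to get $\hat\zeta_r(\tilde T^*)=\check\zeta_l(\tilde T^*)$ and invoking uniqueness of the meeting time. Your gap-function route bypasses both contradictions: since $d_h\to d$ uniformly on $[0,T]$, $d$ is nonincreasing with a strict sign change at $T^*$, and $T_h^*$ is characterized by the level $\Delta x$ of $d_h$, you sandwich $T_h^*$ directly between $T^*-\eta-\Delta t$ and $T^*+\eta$. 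This is cleaner and avoids invoking Theorem~\ref{theo:CV_solution_double_patch} for this step; the price is that you must justify the strict negativity $d(T^*+\eta)<0$, which you correctly do via ``once moving, never stops'' applied to the one-patch interfaces. Case~(b) in the paper is again by contradiction (reducing to case~(a)), whereas your monotonicity argument $d_h(t)\ge d(T-\eta)/2>\Delta x$ on $[0,T-\eta]$ is immediate. Both approaches are valid; yours is shorter and more transparent.
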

Practically speaking this means that if a hole-filling is detected numerically at $t=T^*_h$ then indeed $T^*_h$ is a good approximation to the theoretical filling time $T^*$, while if no hole-filling is detected before the end of the computation then one has simply not waited long enough to see the hole-filling, i-e $T^*\geq T$. In any case the numerical interfaces converge to the theoretical ones, both internal (as long as they exist) and external (up to $t=T$).

Before going into the details, it is worth pointing out that at the filling time there holds
\begin{equation}
0\leq \check{\zeta}_{h,l}(T^*_h)-\hat{\zeta}_{h,r}(T^*_h)\leq \mathcal{O}(\Delta x).
\label{eq:zetal=zetar_hole_filling_time}
\end{equation}
Indeed by \eqref{eq:def_Th} we have $T^*_h=t^N$ for some $N$, which according to Algorithm~\ref{algo:two_patches} is characterized by the fact that virtually computing one more step separately for each patch would result in $\check{\zeta}_l^{N+1}-\hat{\zeta}_r^{N+1}\leq \Delta x$. Recalling that any interface propagates with discrete speed at most $\gamma_0$ (Lemma~\ref{lem:discrete_speed_interfaces}) we see that indeed $0\leq \check{\zeta}_l^{N}-\hat{\zeta}_r^{N}	\leq (\check{\zeta}_l^{N+1}-\hat{\zeta}_r^{N+1})+2\gamma_0\Delta t\leq \Delta x+2\gamma_0\Delta t\leq \mathcal{O}(\Delta x)$ since $\Delta t=\mathcal{O}(\Delta x^2)$.
\begin{proof}[Proof of (a)]
We first show that the hole-filling always eventually occurs before the end of the computation if $h$ is small enough, i-e $T_{h}^*\leq T-\delta_0$ for some small $\delta_0>0$ as in our statement. Assuming by contradiction that this does no hold, then by definition of $T^*_h$ there is a discrete subsequence (not relabeled) such that either no numerical hitting occurs before $t=T$, or occurs for times $T^*_h\nearrow T$. In any case and by definition of the internal interfaces we can find a sequence of points $(x_h,t_h)$ such that $t_h\nearrow T$ and $x_h\in [\hat{\zeta}_{h,r}(t_h),\check{\zeta}_{h,l}(t_h)]$ with $v_h(x_h,t_h)$=0. By monotonicity of the interfaces we see that $x_h$ stays in the fixed compact set $[\hat{\zeta}_{r}(0),\check{\zeta}_{l}(0)]$, so up to extracting a further subsequence we can assume that $x_h\to x_0\in[\hat{\zeta}_{r}(0),\check{\zeta}_{l}(0)]$. By Theorem~\ref{theo:CV_solution_double_patch} we get
$$
v(x_0,T)=\lim\limits_{h\searrow 0}v_h(x_h,T^*_h)=0	\quad\mbox{for some }	x_0\in[\hat{\zeta}_{r}(0),\check{\zeta}_{l}(0)].
$$
We argue now for the theoretical solution and interfaces in order to get a contradiction. Because $T^*<T$ and the internal interfaces start at positive distance from each other they must meet for some $x^*=\hat{\zeta}_{r}(T^*)=\check{\zeta}_{l}(T^*)\in[\hat{\zeta}_{r}(0),\check{\zeta}_{l}(0)]$. Then necessarily one of them has started moving before $t=T^*$ (otherwise they would not meet). Once an interface starts moving it never stops, so at least one of the interfaces is really moving at $t=T^*$ and thus $\hat{v}(x^*,t)>0$ or $\check{v}(x^*,t)>0$ for all $t>T^*$. By the comparison principle $v\geq \max\{\hat{v},\check{v}\}$ is positive everywhere in $[\hat{\zeta}_{r}(0),\check{\zeta}_{l}(0)]$ for all $t>T^*$, in particular for $t=T>T^*$. This finally contradicts $v(x_0,T)=0$.

We claim now that $\lim\limits_{h\searrow 0}T^*_h=T^*$. Since $0\leq T^*_h\leq T-\delta_0$ for small $h$, we can extract a subsequence such that $T^*_{h'}\to\tilde{T}^*$ for some $\tilde{T}^*<T$. We prove that necessarily $\tilde{T}^*=T^*$, which will show that the whole sequence converges. Virtually keeping applying the one-patch algorithm separately to each of the patches $\hat{v}_{h'},\check{v}_{h'}$ after $t=T^*_{h'}$, we can naturally extend $\hat{\zeta}_{h',r},\check{\zeta}_{h',l}$ to all $t\in[0,T]$. By construction of our scheme these extended interfaces, still denoted $\hat{\zeta}_{h',r},\check{\zeta}_{h',l}$ with a slight abuse of notations, coincide with the internal interfaces for $v_{h'}$ up to the numerical filling time $T^*_{h}$, after which we stop tracking the true internal interfaces but the extended ones virtually still exist up to $t=T$. Applying Theorem~\ref{theo:CV_solution_interfaces_single_patch} we see that the extended interfaces $\hat{\zeta}_{h',r},\check{\zeta}_{h',l} \to \hat{
\zeta}_{r},\check{\zeta}_{l}$ uniformly in $[0,T]$, where $\hat{\zeta},\check{\zeta}$ are the interfaces of each patch $\hat{v},\check{v}$ considered as two independent solutions. Since $T^*_{h'}\to \tilde{T}^*$ we get by \eqref{eq:zetal=zetar_hole_filling_time} and uniform convergence that
$$
\hat{\zeta}_r(\tilde{T}^*)-\check{\zeta}_l(\tilde{T}^*)=\lim\limits_{h'\to 0}\left(\hat{\zeta}_{h',r}(T^*_{h'})-\check{\zeta}_{h',l}(T^*_{h'})\right)=0.
$$
Since $\hat{\zeta}_{r},\check{\zeta}_{l}$ are monotone and start at positive distance, and because once an interface starts moving it never stops, they can only meet at a unique time. By definition this time is $t=T^*$, thus $\tilde{T}^*=T^*$ and $T^*_{h}\to T^*$ as desired.

Uniform convergence of the interfaces can be obtained as in the proof of Theorem~\ref{theo:CV_solution_interfaces_single_patch} as long as the internal interfaces exist and are tracked numerically (this is why we need to step $\eta$ away from $T^*$ as in our statement, thus ensuring that the internal interfaces are numerically defined at least for fixed time intervals $[0,T^*-\eta]$), and the proof is achieved.
\end{proof}
\begin{proof}[Proof of (b)] We claim that a hole-filling can only be detected numerically for times $T_h^*\geq T-\eta$ close to the total computation time $T$ if $h$ is small enough (and may actually not be detected). For if not, then $T^*_{h'}\leq T-\delta_0$ for some subsequence and fixed $\delta_0>0$. Arguing exactly as in (a) we conclude that $T^*_{h'}\to T^*$, which shows in particular that $T^*\leq T-\delta_0$ and contradicts $T^*\geq T$. The convergence of the interfaces is also exactly similar to the proof of Theorem~\ref{theo:CV_solution_interfaces_single_patch}, stepping again $\eta>0$ away from $t=T$ for the internal interfaces as in our statement.
\end{proof}

%
%
\section{Numerical experiments}
\label{section:num_exp_comments}
The stability \eqref{eq:CFL} condition was imposed in order to ensure Lipschitz bounds and $L^\infty$ stability of the scheme (Lemma~\ref{lem:Linfty_Lipschitz_estimate}), but also the generalized Aronson-B\'enilan estimate (Lemma~\ref{lem:Aronson_Benilan_estimate}). For numerical purposes the less stringent condition
\begin{equation}
\beta\leq \frac{1}{2\Big(\sigma(M)+\eps\Big)}
\quad\mbox{and}\quad
\gamma_0\Delta x\Big(1+S_1(M)/2\Big)\leq \eps\leq \mathcal{O}(\Delta x)
\label{eq:CFL'}
\tag{CFL'}
\end{equation}
suffices to guarantee the stability Lemma~\ref{lem:Linfty_Lipschitz_estimate} and seems to give satisfactory convergence (see below). Note that in contrast with \eqref{eq:CFL} this relaxed condition does not depend on $s_1(M),S_2(M)$ anymore. In any case the computationally expensive $\beta=\Delta t/\Delta x^2=\mathcal{O}(1)$ condition is necessary due to the explicit nature of the scheme. In \cite{H85} Hoff considered a linearly implicit version of \cite{DBH84} for the pure PME nonlinearity. We presented here the explicit scheme for the ease of exposition, but all the theoretical results in Sections~\ref{section:one_patch} and \ref{section:two_patches} extend to general nonlinearities by considering the implicit scheme
$$
\frac{v_k^{n+1}-v_k^n}{\Delta t}=\sigma(v_k^n)\frac{A v_k^{n+1}}{\Delta x^2}+\eps \frac{Av_k^n}{\Delta x^2}+\left|\frac{v_{k+1}^n-v_{k-1}^n}{2\Delta x}\right|^2.
$$
In this case the stability condition becomes $\Delta t=\mathcal{O}(\Delta x)$, which is clearly the best one can hope since the propagation law $d\zeta/dt=-\partial_x v$ is intrinsically hyperbolic.\\

In order to test our scheme and because no explicit solutions are known for general nonlinearities we restrict to the pure PME $\partial_tv =(m-1)v\partial_{xx}^2v+|\partial_x v|^2$, to which the
Barenblatt profiles
$$
t\geq -t_0:\qquad V_m(x,t;C,x_0,t_0)=\frac{1}{t_0+t}\left(C(t_0+t)^{2/(m+1)}-\frac{1}{2(m+1)}|x-x_0|^2\right)_+
$$
are explicit solutions for any $m>1$. Here $C>0$ is a free parameter, while $x_0,t_0$ reflect the invariance under shifts. The interfaces are then explicitly given by
$$
\zeta_{lr}(t)=x_0\pm \sqrt{2(m+1)C}\,(t_0+t)^{1/(m+1)}.
$$
For our numerical experiment we fix $m=2$ and choose arbitrary parameters
$$
\hat{v}(x,t):=V_2(x,t;4/6,0,1),\qquad \check{v}(x,t):=V_2(x,t;1/6,3\sqrt[3]{2},1)
$$
such that the initial supports of $\hat{v}^0(x):=\hat{v}(x,0),\check{v}^0(x):=\check{v}(x,0)$ are at positive distance from each other as in Section~\ref{section:two_patches}. The exact interfaces are
$$
\hat{\zeta}_{lr}(r)=0\pm 2(1+t)^{1/3},\qquad \check{\zeta}_{lr}(t)=3\sqrt[3]{2}\pm (1+t)^{1/3}.
$$
Starting with initial datum $v_0=\max\{\hat{v}^0,\check{v}^0\}$ the theoretical hole-filling time $T^*$ can be computed according to Section~\ref{section:two_patches} by solving $\hat{\zeta}_r(t)=\check{\zeta}_l(t)
\Leftrightarrow t=T^*$, which gives explicitly
$$
T^*=1,\qquad x^*=\hat{\zeta}_r(T^*)=\check{\zeta}_l(T^*)=2\sqrt[3]{2}\approx 2.5198.
$$

All the computations were performed on a personal computer with {\ttfamily Linux/Octave}. We only specify the value of $\Delta x$, the parameters $\Delta t,\eps$ being then chosen respectively with the largest and smallest value allowed by \eqref{eq:CFL'}.
Figure~\ref{fig:FIG_cauchy} shows a typical result with $\Delta x=0.01$ plotted for several values of $t$, and Figure~\ref{fig:FIG_interface} illustrates the corresponding numerical interfaces. The hole filling was numerically detected for $T^*_h=1.0205$ and $x^*_h=2.5236$ (compare with $T^*=1$ and $x^*=2.5198$).
%
\begin{figure}[h!]
 \begin{center}
 \def\svgwidth{1.\textwidth}
\begingroup%
  \makeatletter%
  \providecommand\color[2][]{%
    \errmessage{(Inkscape) Color is used for the text in Inkscape, but the package 'color.sty' is not loaded}%
    \renewcommand\color[2][]{}%
  }%
  \providecommand\transparent[1]{%
    \errmessage{(Inkscape) Transparency is used (non-zero) for the text in Inkscape, but the package 'transparent.sty' is not loaded}%
    \renewcommand\transparent[1]{}%
  }%
  \providecommand\rotatebox[2]{#2}%
  \ifx\svgwidth\undefined%
    \setlength{\unitlength}{615.2bp}%
    \ifx\svgscale\undefined%
      \relax%
    \else%
      \setlength{\unitlength}{\unitlength * \real{\svgscale}}%
    \fi%
  \else%
    \setlength{\unitlength}{\svgwidth}%
  \fi%
  \global\let\svgwidth\undefined%
  \global\let\svgscale\undefined%
  \makeatother%
  \begin{picture}(1,0.74902471)%
    \put(0,0){\includegraphics[width=\unitlength]{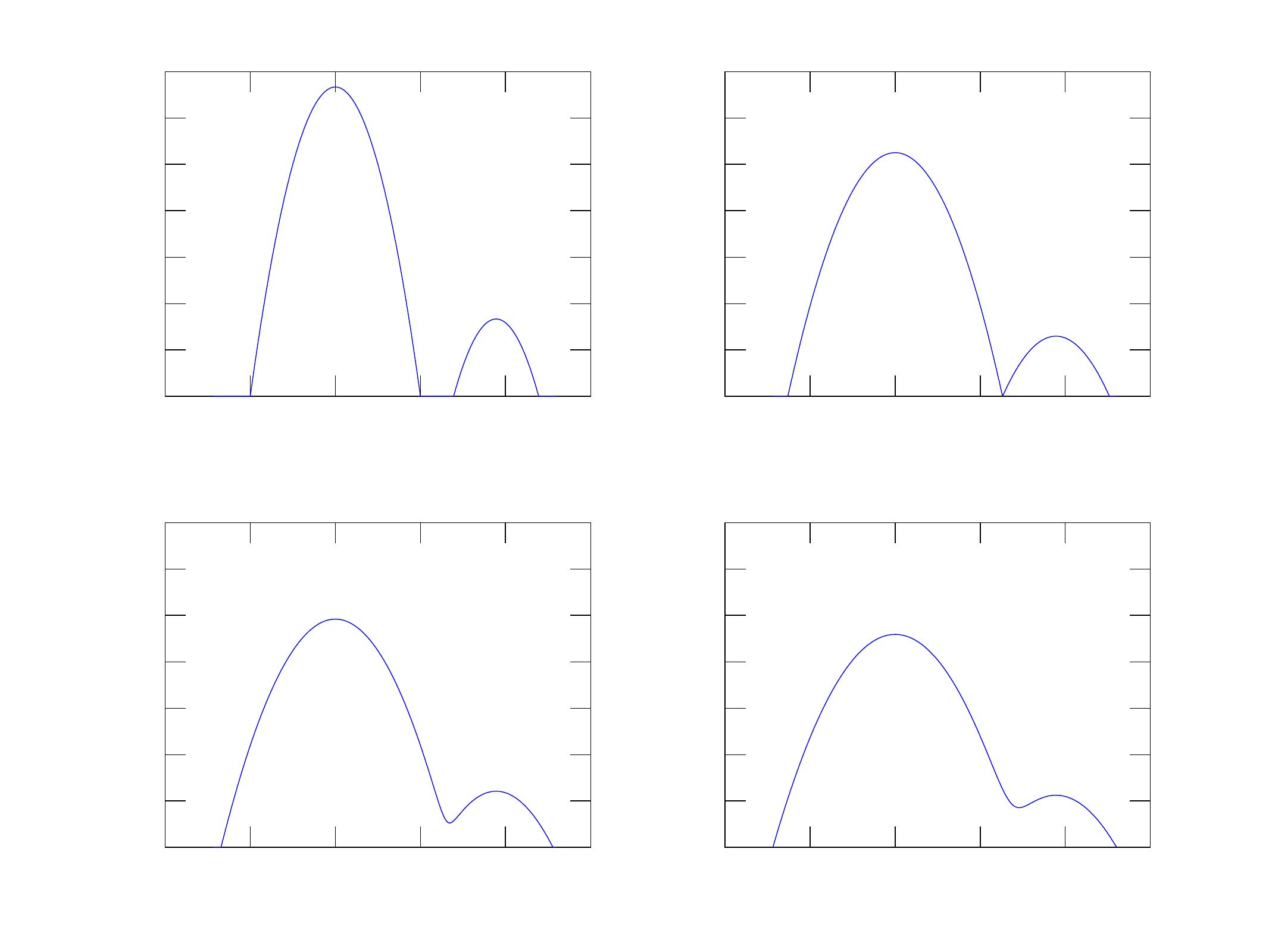}}%
    \put(0.55942783,0.07646294){\makebox(0,0)[rb]{\smash{0}}}%
    \put(0.55942783,0.1130039){\makebox(0,0)[rb]{\smash{0.1}}}%
    \put(0.55942783,0.14941482){\makebox(0,0)[rb]{\smash{0.2}}}%
    \put(0.55942783,0.18595579){\makebox(0,0)[rb]{\smash{0.3}}}%
    \put(0.55942783,0.22249675){\makebox(0,0)[rb]{\smash{0.4}}}%
    \put(0.55942783,0.25903771){\makebox(0,0)[rb]{\smash{0.5}}}%
    \put(0.55942783,0.29544863){\makebox(0,0)[rb]{\smash{0.6}}}%
    \put(0.55942783,0.3319896){\makebox(0,0)[rb]{\smash{0.7}}}%
    \put(0.57022107,0.05305592){\makebox(0,0)[b]{\smash{-4}}}%
    \put(0.63719116,0.05305592){\makebox(0,0)[b]{\smash{-2}}}%
    \put(0.70403121,0.05305592){\makebox(0,0)[b]{\smash{0}}}%
    \put(0.7710013,0.05305592){\makebox(0,0)[b]{\smash{2}}}%
    \put(0.83784135,0.05305592){\makebox(0,0)[b]{\smash{4}}}%
    \put(0.90481144,0.05305592){\makebox(0,0)[b]{\smash{6}}}%
    \put(0.73745124,0.01794538){\makebox(0,0)[b]{\smash{$x$}}}%
    \put(0.76065866,0.29208228){\makebox(0,0)[lb]{\smash{$t=2$}}}%
    \put(0.11911573,0.07646294){\makebox(0,0)[rb]{\smash{0}}}%
    \put(0.11911573,0.1130039){\makebox(0,0)[rb]{\smash{0.1}}}%
    \put(0.11911573,0.14941482){\makebox(0,0)[rb]{\smash{0.2}}}%
    \put(0.11911573,0.18595579){\makebox(0,0)[rb]{\smash{0.3}}}%
    \put(0.11911573,0.22249675){\makebox(0,0)[rb]{\smash{0.4}}}%
    \put(0.11911573,0.25903771){\makebox(0,0)[rb]{\smash{0.5}}}%
    \put(0.11911573,0.29544863){\makebox(0,0)[rb]{\smash{0.6}}}%
    \put(0.11911573,0.3319896){\makebox(0,0)[rb]{\smash{0.7}}}%
    \put(0.12990897,0.05305592){\makebox(0,0)[b]{\smash{-4}}}%
    \put(0.19687906,0.05305592){\makebox(0,0)[b]{\smash{-2}}}%
    \put(0.26371912,0.05305592){\makebox(0,0)[b]{\smash{0}}}%
    \put(0.33068921,0.05305592){\makebox(0,0)[b]{\smash{2}}}%
    \put(0.39752926,0.05305592){\makebox(0,0)[b]{\smash{4}}}%
    \put(0.46449935,0.05305592){\makebox(0,0)[b]{\smash{6}}}%
    \put(0.29713914,0.01794538){\makebox(0,0)[b]{\smash{$x$}}}%
    \put(0.32259081,0.29544863){\makebox(0,0)[lb]{\smash{$t=1.4472$}}}%
    \put(0.55942783,0.4313394){\makebox(0,0)[rb]{\smash{0}}}%
    \put(0.55942783,0.46788036){\makebox(0,0)[rb]{\smash{0.1}}}%
    \put(0.55942783,0.50429129){\makebox(0,0)[rb]{\smash{0.2}}}%
    \put(0.55942783,0.54083225){\makebox(0,0)[rb]{\smash{0.3}}}%
    \put(0.55942783,0.57737321){\makebox(0,0)[rb]{\smash{0.4}}}%
    \put(0.55942783,0.61391417){\makebox(0,0)[rb]{\smash{0.5}}}%
    \put(0.55942783,0.6503251){\makebox(0,0)[rb]{\smash{0.6}}}%
    \put(0.55942783,0.68686606){\makebox(0,0)[rb]{\smash{0.7}}}%
    \put(0.57022107,0.40793238){\makebox(0,0)[b]{\smash{-4}}}%
    \put(0.63719116,0.40793238){\makebox(0,0)[b]{\smash{-2}}}%
    \put(0.70403121,0.40793238){\makebox(0,0)[b]{\smash{0}}}%
    \put(0.7710013,0.40793238){\makebox(0,0)[b]{\smash{2}}}%
    \put(0.83784135,0.40793238){\makebox(0,0)[b]{\smash{4}}}%
    \put(0.90481144,0.40793238){\makebox(0,0)[b]{\smash{6}}}%
    \put(0.75953654,0.6503251){\makebox(0,0)[lb]{\smash{$t=1.0205$}}}%
    \put(0.11911573,0.4313394){\makebox(0,0)[rb]{\smash{0}}}%
    \put(0.11911573,0.46788036){\makebox(0,0)[rb]{\smash{0.1}}}%
    \put(0.11911573,0.50429129){\makebox(0,0)[rb]{\smash{0.2}}}%
    \put(0.11911573,0.54083225){\makebox(0,0)[rb]{\smash{0.3}}}%
    \put(0.11911573,0.57737321){\makebox(0,0)[rb]{\smash{0.4}}}%
    \put(0.11911573,0.61391417){\makebox(0,0)[rb]{\smash{0.5}}}%
    \put(0.11911573,0.6503251){\makebox(0,0)[rb]{\smash{0.6}}}%
    \put(0.11911573,0.68686606){\makebox(0,0)[rb]{\smash{0.7}}}%
    \put(0.12990897,0.40793238){\makebox(0,0)[b]{\smash{-4}}}%
    \put(0.19687906,0.40793238){\makebox(0,0)[b]{\smash{-2}}}%
    \put(0.26371912,0.40793238){\makebox(0,0)[b]{\smash{0}}}%
    \put(0.33068921,0.40793238){\makebox(0,0)[b]{\smash{2}}}%
    \put(0.39752926,0.40793238){\makebox(0,0)[b]{\smash{4}}}%
    \put(0.46449935,0.40793238){\makebox(0,0)[b]{\smash{6}}}%
    \put(0.29713914,0.37282185){\makebox(0,0)[b]{\smash{$x$}}}%
    \put(0.32259081,0.6503251){\makebox(0,0)[lb]{\smash{$t=0$}}}%
  \end{picture}%
\endgroup%

 \end{center}
  \caption[Figure~\ref{fig:FIG_cauchy}]{numerical solution $v_h(\,.\,,t)$ plotted for several times ($\Delta x=0.01$)}
 \label{fig:FIG_cauchy}
 \end{figure}
 \begin{figure}[h!]
 \centering
 \def\svgwidth{.7\textwidth}
\begingroup%
  \makeatletter%
  \providecommand\color[2][]{%
    \errmessage{(Inkscape) Color is used for the text in Inkscape, but the package 'color.sty' is not loaded}%
    \renewcommand\color[2][]{}%
  }%
  \providecommand\transparent[1]{%
    \errmessage{(Inkscape) Transparency is used (non-zero) for the text in Inkscape, but the package 'transparent.sty' is not loaded}%
    \renewcommand\transparent[1]{}%
  }%
  \providecommand\rotatebox[2]{#2}%
  \ifx\svgwidth\undefined%
    \setlength{\unitlength}{615.2bp}%
    \ifx\svgscale\undefined%
      \relax%
    \else%
      \setlength{\unitlength}{\unitlength * \real{\svgscale}}%
    \fi%
  \else%
    \setlength{\unitlength}{\svgwidth}%
  \fi%
  \global\let\svgwidth\undefined%
  \global\let\svgscale\undefined%
  \makeatother%
  \begin{picture}(1,0.74902471)%
    \put(0,0){\includegraphics[width=\unitlength]{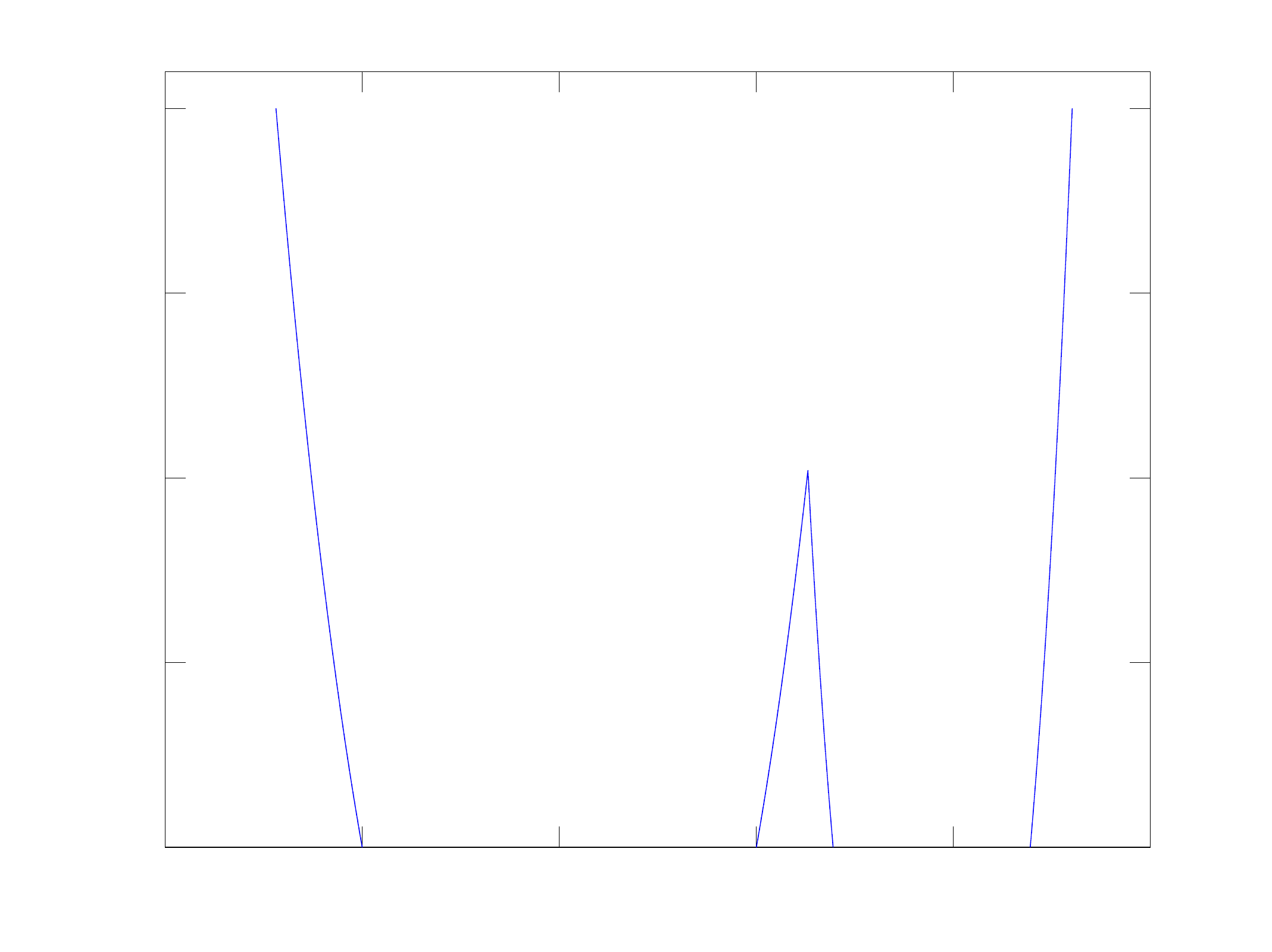}}%
    \put(0.11911573,0.07646294){\makebox(0,0)[rb]{\smash{0}}}%
    \put(0.11911573,0.22184655){\makebox(0,0)[rb]{\smash{0.5}}}%
    \put(0.11911573,0.36710013){\makebox(0,0)[rb]{\smash{1}}}%
    \put(0.11911573,0.51248375){\makebox(0,0)[rb]{\smash{1.5}}}%
    \put(0.11911573,0.65773732){\makebox(0,0)[rb]{\smash{2}}}%
    \put(0.12990897,0.05305592){\makebox(0,0)[b]{\smash{-4}}}%
    \put(0.28491547,0.05305592){\makebox(0,0)[b]{\smash{-2}}}%
    \put(0.43992198,0.05305592){\makebox(0,0)[b]{\smash{0}}}%
    \put(0.59479844,0.05305592){\makebox(0,0)[b]{\smash{2}}}%
    \put(0.74980494,0.05305592){\makebox(0,0)[b]{\smash{4}}}%
    \put(0.90481144,0.05305592){\makebox(0,0)[b]{\smash{6}}}%
    \put(0.07009103,0.38751625){\rotatebox{90}{\makebox(0,0)[b]{\smash{$t$}}}}%
    \put(0.51729519,0.01794538){\makebox(0,0)[b]{\smash{$x$}}}%
  \end{picture}%
\endgroup%

  \caption[Figure~\ref{fig:FIG_interface}]{interface curves ($\Delta x=0.01$)}
 \label{fig:FIG_interface}
 \end{figure}
\par
In addition to an abstract convergence result as in Theorem~\ref{theo:CV_solution_interfaces_single_patch}, DiBenedetto and Hoff also derived explicit error estimates for the pure PME nonlinearity in the form $\|\zeta_h-\zeta\|_{L^\infty(0,T)}+\|v_h-v\|_{L^{\infty}(Q_T)}\leq \mathcal{O}\left( \Delta x^{\alpha}|\log \Delta x|^{\beta}\right)$ for some structural $\alpha,\beta$ related to $m>1$, see \cite[Theorem 4.1]{DBH84}. However their proof heavily relies on the explicit power structure $\Phi(s)=s^m$, and obtaining error estimates for general nonlinearities is a hard task that we did not carry out here due to the technical difficulties and lack of space. Figure~\ref{fig:FIG_error} shows the numerical errors $E_{x}:=\left|x^*_h-x^*\right|,E_t=|T^*_h-T^*|$ and $E_{\zeta}=\|\zeta_h-\zeta\|_{L^\infty(0,T^*_h))},E_v=\|v_h-v\|_{L^{\infty}(Q_{T^*_h})}$ 
as a function of $\Delta x$, and quite clearly exhibits $\mathcal{O}(\Delta x^{\alpha})$ convergence rates. Thus our scheme gives a good approximation of the solution, interfaces, and coordinates of the hole-filling as predicted from Theorem~\ref{theo:CV_solution_double_patch} and Theorem~\ref{theo:CV_hitting_time}.\\
\begin{figure}[ht!]
 \def\svgwidth{1\textwidth}
 \centering

\begingroup%
  \makeatletter%
  \providecommand\color[2][]{%
    \errmessage{(Inkscape) Color is used for the text in Inkscape, but the package 'color.sty' is not loaded}%
    \renewcommand\color[2][]{}%
  }%
  \providecommand\transparent[1]{%
    \errmessage{(Inkscape) Transparency is used (non-zero) for the text in Inkscape, but the package 'transparent.sty' is not loaded}%
    \renewcommand\transparent[1]{}%
  }%
  \providecommand\rotatebox[2]{#2}%
  \ifx\svgwidth\undefined%
    \setlength{\unitlength}{615.2bp}%
    \ifx\svgscale\undefined%
      \relax%
    \else%
      \setlength{\unitlength}{\unitlength * \real{\svgscale}}%
    \fi%
  \else%
    \setlength{\unitlength}{\svgwidth}%
  \fi%
  \global\let\svgwidth\undefined%
  \global\let\svgscale\undefined%
  \makeatother%
  \begin{picture}(1,0.74902471)%
    \put(0,0){\includegraphics[width=\unitlength]{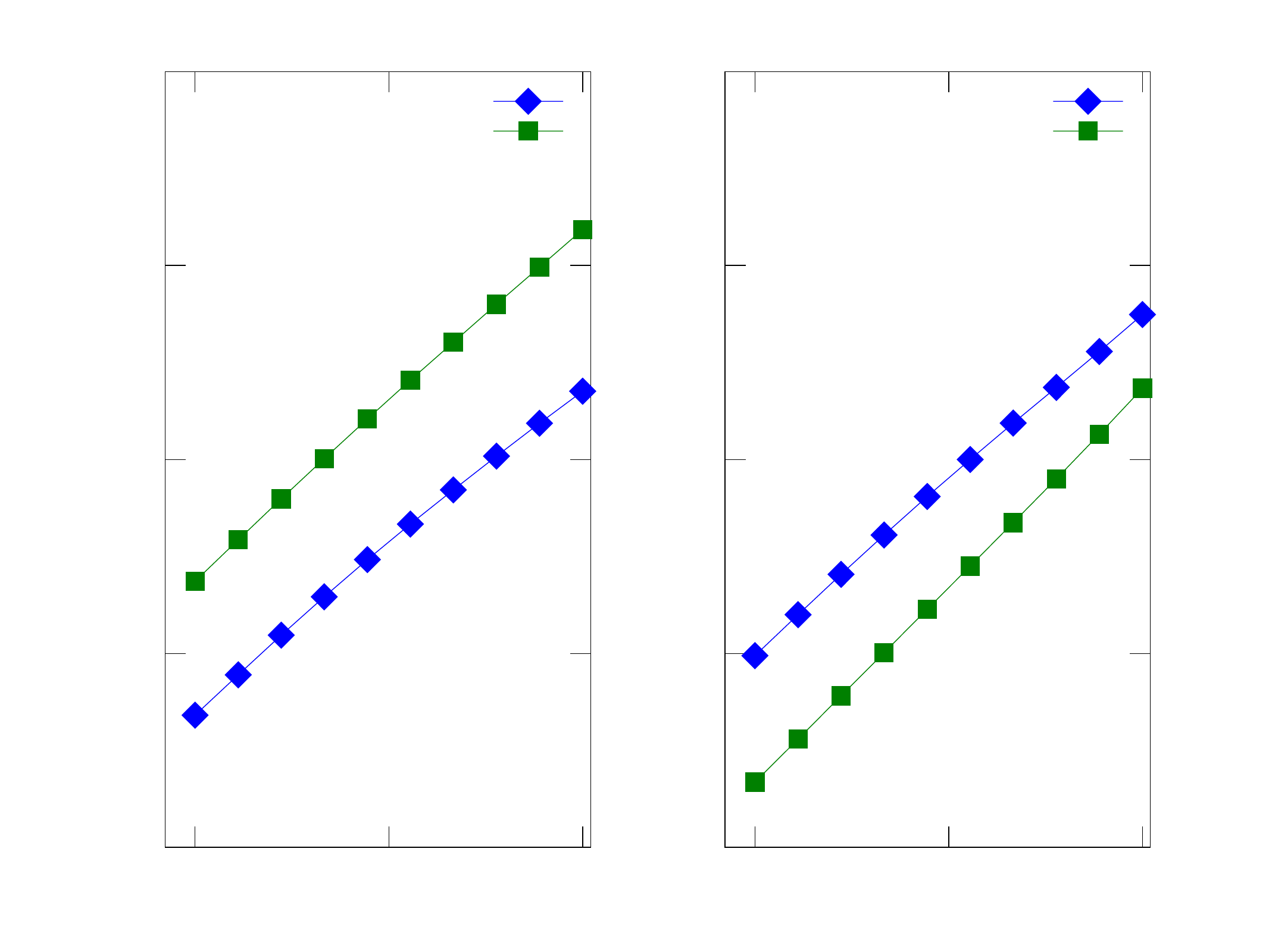}}%
    \put(0.55942783,0.07646294){\makebox(0,0)[rb]{\smash{10-4}}}%
    \put(0.55942783,0.2289987){\makebox(0,0)[rb]{\smash{10-3}}}%
    \put(0.55942783,0.3816645){\makebox(0,0)[rb]{\smash{10-2}}}%
    \put(0.55942783,0.5343303){\makebox(0,0)[rb]{\smash{10-1}}}%
    \put(0.55942783,0.68686606){\makebox(0,0)[rb]{\smash{100}}}%
    \put(0.59375813,0.05305592){\makebox(0,0)[b]{\smash{10-3}}}%
    \put(0.74616385,0.05305592){\makebox(0,0)[b]{\smash{10-2}}}%
    \put(0.89856957,0.05305592){\makebox(0,0)[b]{\smash{10-1}}}%
    \put(0.73745124,0.01794538){\makebox(0,0)[b]{\smash{$\Delta x$}}}%
    \put(0.81755527,0.66345904){\makebox(0,0)[rb]{\smash{$E_{\zeta}$}}}%
    \put(0.81755527,0.64005202){\makebox(0,0)[rb]{\smash{$E_v$}}}%
    \put(0.11911573,0.07646294){\makebox(0,0)[rb]{\smash{10-4}}}%
    \put(0.11911573,0.2289987){\makebox(0,0)[rb]{\smash{10-3}}}%
    \put(0.11911573,0.3816645){\makebox(0,0)[rb]{\smash{10-2}}}%
    \put(0.11911573,0.5343303){\makebox(0,0)[rb]{\smash{10-1}}}%
    \put(0.11911573,0.68686606){\makebox(0,0)[rb]{\smash{100}}}%
    \put(0.15344603,0.05305592){\makebox(0,0)[b]{\smash{10-3}}}%
    \put(0.30585176,0.05305592){\makebox(0,0)[b]{\smash{10-2}}}%
    \put(0.45825748,0.05305592){\makebox(0,0)[b]{\smash{10-1}}}%
    \put(0.29713914,0.01794538){\makebox(0,0)[b]{\smash{$\Delta x$}}}%
    \put(0.37724317,0.66345904){\makebox(0,0)[rb]{\smash{$E_x$}}}%
    \put(0.37168896,0.64005202){\makebox(0,0)[rb]{\smash{$E_t$}}}%
  \end{picture}%
\endgroup%

  \caption[Figure~\ref{fig:FIG_error}]{errors as a function of $\Delta x$}
 \label{fig:FIG_error}
 \end{figure}
\subsection*{Acknowledgements}
The author was supported by the Portuguese FCT fellowship SFRH/BPD/88207/2012.

\bibliographystyle{siam}
\bibliography{./biblio}

%
%
%
%
%
%
\end{document}